\newcommand{\mc}[1]{\mathcal{#1}}
\newcommand{\ip}[2]{\left \langle #1, #2 \right \rangle}
\newcommand{\set}[1]{\left\{#1\right\}}
\newcommand{\norm}[1]{\left\Vert#1\right\Vert}
\newcommand{\mf}[1]{\mathbb{#1}}
\newcommand{\primes}{_E}
\newcommand{\rc}{\mathrm{Rc} }
\newcommand{\BlockE}{\text{\normalfont BlockExt} }
\newcommand{\mb}[1]{\mathbf{#1}}
\def\CummTensor{\widehat{\mathrm{R}}^\mb{x}_{\pi_f}}
\def\NB{\text{\normalfont Narc}}
\def\MaxC{\text{\normalfont MinMax}}
\def\OptTTyl{\mb{\hat{T}}^{\mb{x}}}
\def\OptT{\mb{T}^{\mb{x}}}
\def\Cumm {{\mathrm R}^\mb{x}_{\pi}}
\def\CummTensor {\hat{\mathrm{R}}^\mb{x}_{\pi}}
\definecolor{green}{RGB}{0,127,0}
\definecolor{red}{RGB}{191,0,0}
\theoremstyle{plain}
\newcommand{\Tens}{\OptTTyl} 
\def\FQ{\mathcal{F}_{\s,\q}(\HH) }
\def\T{\bar T \otimes  T }
\newtheorem{lemma}{Lemma}[section]
\newtheorem{theorem}[lemma]{Theorem}
\newtheorem{corollary}[lemma]{Corollary}
\newtheorem{proposition}[lemma]{Proposition}
\newtheorem{Prop}[lemma]{Proposition}
\newtheorem{definition}[lemma]{Definition}
\newtheorem{definition-lemma}[lemma]{Definition-Lemma}
\theoremstyle{remark}
\newtheorem{remark}[lemma]{Remark}
\newtheorem{example}{Example}
\DeclareMathOperator{\SG}{S}
\def\F{\mathcal{F}_{\rm fin}(H)}
\def\FD{\mathcal{F}_{\rm fin}(\mc{D})}
\def\Cum { R^{\mb{x},T,\lambda}(}
\newcommand{\Semi}{{\normalfont Arc} }
\def\O{\otimes}
\let\Im\undefined
\DeclareMathOperator{\Im}{\mathrm{Im}}
\numberwithin{equation}{section} 
\def\An{\text{\rm vN}(B_{\s,\q}(\HH_\R))}
\def\C{{\mathbb C}}
\def\R{{\mathbb R}}
\def\N{{\mathbb N}}
\def\D{{\rm P}}
\def\6{\, {\rm d}}
\def\i{{\rm i}}
\def\B{b_{\s,\q}}
\def\G{ B_{\alpha,q}}
\def\GZ{ B_{\alpha,q}}
\def\r{p}
\def\l{\emph{l}}
\def\r{\emph{r}}
\def\ell {n}
\def\F{\mathcal{F}_{\rm fin}(\HH)}
\def\id{I}
\def\state {\varphi} 
\def\m{\mu_{\s,\q,x,y}}
\def\P{\mathcal{P}}
\def\NC{\mathcal{NC}}
\def\Out{\rm OutArc}
\def\BL{{\mathbf{B}}}
\def\PB{\mathcal{P}^{B}}
\def\I{I}
\def\PA{\mathcal{P}^{A}}
\def\Cr{\text{\normalfont Rc}}
\def\InS{\text{\normalfont Cs}}
\def\NB{\text{\normalfont Na}}
\def\Pair{\text{\normalfont Block}}
\def\Sing{\text{\normalfont Sing}}
\renewcommand{\epsilon}{\varepsilon}
\def\H{{\bf \mathcal K}_{ n}}
\newcommand{\e}{{\epsilon}}
\def\HH{{\bf \mathcal K}}
\def\inv{{pinv}}
\def\ninv{{ninv}}
\newcommand{\q}{q}
\newcommand{\s}{\alpha}
\newcommand{\x}{\mathbf{x}}
\DeclareMathOperator{\Part}{\mathcal{P}}
\newcommand\MatchingMeandersab[3]{%
	\begin{tikzpicture}[scale=0.4]
		\foreach \x in {1,...,#1}{
			\draw[circle,fill] (\x,0)circle[radius=1mm]node[below]{};
		}
		\foreach \x/\y in {#2} {
			\pgfmathsetmacro{\Radius}{\y/2-\x/2}
			\draw(\x,0) arc[radius=\Radius, start angle=180, end angle=0];
			;
		}
		\foreach \x/\y in {#3} {
			\pgfmathsetmacro{\Radius}{\y/2-\x/2}
			\draw(\x,0) arc[radius=\Radius, start angle=-180, end angle=0];
			;}
		\foreach \x in {-#1,...,-1}{
			\draw[circle,fill] (\x,0)circle[radius=1mm]node[below]{};
		}
		\node at (-4,-0.7) { $\overline 4$}; 
		\node at (-3,-0.7) { $ \overline 3$}; 
		\node at (-2,-0.7) { $\overline  2$};
		\node at (-1,-0.7) { $\overline  1$};
		\node at (4,-0.7) { $ 4$}; 
		\node at (3,-0.7) { $  3$}; 
		\node at (2,-0.7) { $  2$};
		\node at (1,-0.7) { $  1$};
	\end{tikzpicture}
}
\newcommand\MatchingMeandersarcs[3]{%
	\begin{tikzpicture}[scale=0.7]
		\foreach \x in {1,...,#1}{
			\draw[circle,fill] (\x,0)circle[radius=1mm]node[below]{};
		}
		\foreach \x/\y in {#2} {
			\pgfmathsetmacro{\Radius}{\y/2-\x/2}
			\draw(\x,0) arc[radius=\Radius, start angle=180, end angle=0];
			;
		}
		\foreach \x/\y in {#3} {
			\pgfmathsetmacro{\Radius}{\y/2-\x/2}
			\draw(\x,0) arc[radius=\Radius, start angle=-180, end angle=0];
			;}
		\foreach \x in {-#1,...,-1}{
			\draw[circle,fill] (\x,0)circle[radius=1mm]node[below]{};
		}
		\node at (-4,-0.5) { $\overline 4$}; 
            \node at (-2,3) { $-$}; 
		\node at (-3,-0.5) { $ \overline 3$}; 
  \node at (-2.5,1) { $+$}; 
		\node at (-2,-0.5) { $\overline 2$};
		\node at (-1,-0.5) { $  \overline 1$};
  \node at (1.8,3) { $-$};
		\node at (4,-0.5) { $ 4$}; 
		\node at (3,-0.5) { $  3$}; 
		\node at (2,-0.5) { $ 2$};
  \node at (2.5,1) { $+$};
		\node at (1,-0.5) { $ 1$};
	\end{tikzpicture}}
\newcommand\MatchingMeandersabc[3]{%
\begin{minipage}{.2\textwidth}
	\begin{tikzpicture}[scale=0.6]
		\foreach \x in {1,...,#1}{
			\draw[circle,fill] (\x,0)circle[radius=1mm]node[below]{};
		}
		\foreach \x/\y in {#2} {
			\pgfmathsetmacro{\Radius}{\y/2-\x/2}
			\draw(\x,0) arc[radius=\Radius, start angle=180, end angle=0];
			;
		}
		\foreach \x/\y in {#3} {
			\pgfmathsetmacro{\Radius}{\y/2-\x/2}
			\draw(\x,0) arc[radius=\Radius, start angle=-180, end angle=0];
			;}
		\foreach \x in {-#1,...,-1}{
			\draw[circle,fill] (\x,0)circle[radius=1mm]node[below]{};
		}
		\node at (-4,-0.5){ $ \overline 4$}; 
		\node at (-3,-0.5) { $ \overline 3$}; 
		\node at (-2,-0.5) { $\overline 2$};
		\node at (-1,-0.5) { $  \overline 1$};
		\node at (4,-0.5) { $ 4$}; 
		\node at (3,-0.5) { $  3$}; 
		\node at (2,-0.5) { $ 2$};
		\node at (1,-0.5) { $ 1$};
  \node at (0,-1.5) {Block};
	\end{tikzpicture}
 \end{minipage}
}
\newcommand\MatchingMeandersexample[3]{%
	\begin{tikzpicture}[scale=0.2]
		\foreach \x in {1,...,#1}{
			\draw[circle,fill] (\x,0)circle[radius=1mm]node[below]{};
		}
		\foreach \x/\y in {#2} {
			\pgfmathsetmacro{\Radius}{\y/2-\x/2}
			\draw(\x,0) arc[radius=\Radius, start angle=180, end angle=0];
			;
		}
		\foreach \x/\y in {#3} {
			\pgfmathsetmacro{\Radius}{\y/2-\x/2}
			\draw(\x,0) arc[radius=\Radius, start angle=-180, end angle=0];
			;}
		\foreach \x in {-#1,...,-1}{
			\draw[circle,fill] (\x,0)circle[radius=1mm]node[below]{};
		}
	\end{tikzpicture}
}
\newcommand\MatchingMeandersPositiveTracea[3]{%
	\begin{tikzpicture}[scale=0.35]
		\foreach \x in {1,...,#1}{
			\draw[circle,fill] (\x,0)circle[radius=1mm]node[below]{};
		}
		\foreach \x/\y in {#2} {
			\pgfmathsetmacro{\Radius}{\y/2-\x/2}
			\draw(\x,0) arc[radius=\Radius, start angle=180, end angle=0];
			;
		}
		\foreach \x/\y in {#3} {
			\pgfmathsetmacro{\Radius}{\y/2-\x/2}
			\draw(\x,0) arc[radius=\Radius, start angle=-180, end angle=0];
			;}
		\foreach \x in {-#1,...,-1}{
			\draw[circle,fill] (\x,0)circle[radius=1mm]node[below]{};
		}
		\node at (-3.8,-0.6) { $\scriptscriptstyle{ \bar{s}_{i}}$};
        \node at (3.8,-0.6) { $\scriptscriptstyle{ s_{i}}$};
		\node at (-0.8,-0.6) { $\scriptscriptstyle{\bar 1}$};
		
		\node at (8,-0.6) { $\scriptscriptstyle{ s_r}$}; 
         \node at (-8,-0.6) { $\scriptscriptstyle{ \bar{s}_{r}}$}; 
		\node at (1.2,-0.6) { $\scriptscriptstyle{ 1}$}; 
		\node at (-2.5,4.5) { $\scriptscriptstyle{ \overline{W}_j}$}; 
		\node at (2.5,4.5) { $\scriptscriptstyle{ {W}_j}$}; 
		\node at (9,4) { $ \mapsto$};
	\end{tikzpicture}
}
\newcommand\MatchingMeandersPositiveTraceb[3]{%
	\begin{tikzpicture}[scale=0.35]
		\foreach \x in {1,...,#1}{
			\draw[circle,fill] (\x,0)circle[radius=1mm]node[below]{};
		}
		\foreach \x/\y in {#2} {
			\pgfmathsetmacro{\Radius}{\y/2-\x/2}
			\draw(\x,0) arc[radius=\Radius, start angle=180, end angle=0];
			;
		}
		\foreach \x/\y in {#3} {
			\pgfmathsetmacro{\Radius}{\y/2-\x/2}
			\draw(\x,0) arc[radius=\Radius, start angle=-180, end angle=0];
			;}
		\foreach \x in {-#1,...,-1}{
			\draw[circle,fill] (\x,0)circle[radius=1mm]node[below]{};
		}
	 
		\node at (-3.8,-0.6) { $\scriptscriptstyle{ \overline s_{i}}$}; 
		\node at (-0.8,-0.6) { $\scriptscriptstyle{\bar 1}$};
		 
		\node at (4.2,-0.6) { $ \scriptscriptstyle{ s_{i}}$}; 
		\node at (1.2,-0.6) { $\scriptscriptstyle{ 1}$};
		
		\node at (-9.2,-0.6) { $\scriptscriptstyle{\overline{n}}$};  
        \node at (9.2,-0.6) { $\scriptscriptstyle{n}$};  
        \node at (8,-0.6) { $\scriptscriptstyle{s_{r}}$};
        \node at (-8,-0.6) { $\scriptscriptstyle{\bar{s}_{r}}$};
		\node at (-2.5,4.5) { $\scriptscriptstyle{ \overline{W}_j}$}; 
		\node at (2.5,4.5) { $\scriptscriptstyle{ {W}_j}$}; 
	\end{tikzpicture}
	
}
\newcommand\MatchingMeandersnew[2]{%
	\begin{tikzpicture}[scale=0.7]
		\foreach \x in {1,...,#1}{
			\draw[circle,fill] (\x,0)circle[radius=1mm]node[below]{$ \x$};
		}
		\foreach \x/\y in {#2} {
			\pgfmathsetmacro{\Radius}{\y/2-\x/2}
			\draw(\x,0) arc[radius=\Radius, start angle=180, end angle=0];
			;
			\node at (0,5.45) { $\scriptstyle{\blacklozenge}$}; 
			\node at (0,1.7) { $\scriptstyle{\blacklozenge}$}; 
			\draw[] (0,0) -- (0,6);
		}
		
		\foreach \x in  {#1,...,1}{
			\draw[circle,fill] (-\x,0)circle[radius=1mm]node[below]{$ \overline{\x}$};
		}
	\end{tikzpicture}
}
\newcommand\MatchingMeanderscol[2]{%
	\begin{tikzpicture}[scale=0.7]
		\foreach \x in {1,...,#1}{
			\draw[circle,fill] (\x,0)circle[radius=1mm]node[below]{$ \x$};
		}
		\foreach \x/\y in {#2} {
			\pgfmathsetmacro{\Radius}{\y/2-\x/2}
			\draw(\x,0) arc[radius=\Radius, start angle=180, end angle=0];
			;
			\node at (0,5.45) { $\scriptstyle{\blacklozenge}$}; 
			\node at (0,4.2) { $\scriptstyle{\blacklozenge}$}; 
			\node at (0,1.7) { $\scriptstyle{\blacklozenge}$}; 
			\draw[] (0,0) -- (0,6);
		}
		
		\foreach \x in  {#1,...,1}{
			\draw[circle,fill] (-\x,0)circle[radius=1mm]node[below]{$ \overline{\x}$};
		}
    \draw[circle,fill=yellow](-8.8,1.5)circle[radius=1mm];
    \draw[circle,fill=yellow](-2.1,5.3)circle[radius=1mm];
     \draw[circle,fill=yellow](-1.5,5.1)circle[radius=1mm];
     \draw[circle,fill=yellow](-1.7,1.9)circle[radius=1mm];
     \draw[circle,fill=yellow](-5.83,1.45)circle[radius=1mm];
     \draw[circle,fill=yellow](-4.83,1.4)circle[radius=1mm];

	\end{tikzpicture}
}
\begin{document}
	\title{The Poisson type operators on the Double Fock Space of Type B}

	\author[Wiktor Ejsmont]{Wiktor Ejsmont}
	\address[Wiktor Ejsmont]{ Department of Telecommunications and Teleinformatics, 
		Wrocław University of Science and Technology \\ Wybrze\.ze Wyspia{\'n}skiego 27, 50-370 Wroc\l aw, Poland }
	\email{wiktor.ejsmont@gmail.com}

  \author[Patrycja H\k{e}{\'c}ka-J\k{e}draszczyk]{Patrycja H\k{e}{\'c}ka-J\k{e}draszczyk}
 \address [Patrycja H\k{e}{\'c}ka-J\k{e}draszczyk]{Department of Telecommunications and Teleinformatics, Wroclaw University
of Science and Technology\\
Wybrze\.ze Wyspia\'nskiego 27, 50-370 Wroc\l aw, Poland}
\email{patrycja.hecka@gmail.com}
	\maketitle
	
	\begin{abstract}
		 The double Fock space of type B was introduced in 2023 by Bożejko and Ejsmont (\cite{BE23}). In this article, we show the acting of Poisson type operators in that space. For this purpose, we define the double gauge operators (analogous to \cite{Ans01}, \cite{Ejsmont1}) and compute the multidimensional moments of a joint distribution of Poisson operators. We show that the presented method of calculating negative arcs and restricted crossings is compatible with counting positive and negative inversions on a Coxeter group of type B. The present method is much simpler than using colored type-B set partitions in the sense of \cite{Ejsmont1}.
	\end{abstract}

	\section{Introduction}
 Field operators, such as sums of creation and annihilation operators, on symmetric Fock space give rise to Gaussian distributions in the vacuum state. Hudson and Parthasarathy \cite{HudPar} and Sch{\"u}rmann \cite{SchurCondPos} observed that by adding an appropriate
gauge component, we can obtain an operator with Poisson distribution. Bożejko and Speicher used the Coxeter groups of type A  to construct a q-deformed Fock space and a q-deformed Brownian motion \cite{BozejkoSpeicher1991} (= Fock
space of type A). Bożejko, Ejsmont and Hasebe followed this idea in \cite{BEH15} and constructed an $(\alpha,q)$-Fock space using the Coxeter groups of type B. In \cite{Ejsmont1} Ejsmont presented acting of the Poisson type operators on the Fock space of type B. In \cite{BE23} Bożejko and Ejsmont gave an alternative construction to \cite{BEH15} of a generalized Gaussian process related to Coxeter groups of type B. They constructed a deformed probability space by using some symmetrization operator and obtained the result that the joint moments of a Gaussian operator may be expressed by the analogue of
statistic on the set of pair partitions. The proposed construction significantly simplifies combinatorics, which otherwise aligns with counting inversions in Coxeter groups. They introduced a double Fock space of type B.  The main aim of this work is to present how the Poisson type operators act on the double Fock space of type B.

Firstly we recall the work of  Bo\.zejko and Speicher about $q$-Gaussian process \cite{BozejkoSpeicher1991}  on the $q$-deformed Fock space 
$
\mathcal{F}_q(H):=(\mathbb{C}\Omega)\oplus\bigoplus_{n=1}^\infty H^{\otimes n} $, where $-1\leq  q \leq 1$, $\Omega$ denotes the vacuum vector and $H$ is the complexification of some real separable
Hilbert space $H_\R$.  On this space  the authors introduced 
  a deformed inner product, using the following  symmetrization:
\begin{align} 
\sum_{\sigma\in \SG(n)}q^{inv(\sigma)}\sigma, 
\end{align}
where   $\SG(n)$  is the set of all the permutations of $\{1,\dots,n\}$ (the Coxeter groups of type A) and $inv(\sigma):=\mbox{card}\{(i,j):i<j,
\sigma(i)>\sigma(j)\}$ is the number of inversions of  $\sigma\in
\SG(n)$. 

In this article on the double Fock space of type B we define a double creation operator $b_{\alpha,q }^\ast(x \otimes y)$, its adjoint, that is, the double annihilation operator $b_{\alpha,q }(x \otimes y)$, a gauge operator $p(\bar T \otimes T)$ (analogous to Anshelevich \cite{Ans01}, see also \cite{Ejsmont2020}) and a Poisson operator $B^{\lambda_1,\lambda_2}_{\alpha,q}(x \otimes y):=b_{\alpha,q }^\ast(x \otimes y)+b_{\alpha,q }(x \otimes y)+p(\bar T \otimes T)+\lambda_1 \otimes \lambda_2$, $\lambda_1,\lambda_2 \in \mathbb{R}, x \otimes y \in H_\R\otimes {H}_\R$.
The key point  is  computing multidimensional moments of the distribution, which are  given by:
\begin{equation}
\begin{split}
\state(\G(x_{\overline{ n}}\otimes x_{ n})\cdots \G(x_{\bar 1}\otimes x_{ 1}))= \sum_{\pi\in\PB(n)}  \s^{\NB(\pi)}\q^{\Cr(\pi)}  R_{\pi}^{x ,T,\lambda}.
\end{split}
\label{eq:multimomentintro}
\end{equation}
where $\text{Rc}(\pi)$ is the number of crossings of a partition $\pi$ (see \cite{Bia97}), $\text{Na}(\pi)$ is the number of negative arcs (see \cref{def:arcs}), $R_{\pi}^{x ,T,\lambda}$ is B cumulant, (see  \cref{def: bcumulant}) and $\PB(n)$  is the set of partitions of type B (see \cref{def:partycji}). 

	\section{Preliminaries }
	\label{sec:preliminaries}
	In the present section we recall that the construction of type B Fock space \cite{BEH15} can be adapted to recover the
double Fock space of type B \cite{BE23}.
	In the following we will briefly describe the tools we use in this investigation, 
	which is partially contained in the previous  articles \cite{BEH15}, \cite{BE23}. 
	For further information, the reader is referred to \cite{BEH15} and \cite{BE23} and the references therein. 
	\subsection{Coxeter groups of type B}
	The Coxeter
	group of type B (hyperoctahedral group) of degree $n$, denoted by $B(n)$, is generated
	by the elements $\pi_0, \pi_1, \dots , \pi_{n-1}$ satisfy the defining relations $\pi_i^2=e, 0\leq i \leq n-1$,  $(\pi_0\pi_1)^4=(\pi_i \pi_{i+1})^3=e, 1\leq i < n-1$ and $(\pi_i \pi_j)^2=e$ if $|i-j|\geq2, 0\leq i,j\leq n-1$.  Recall that $\{\pi_i\mid i=1,\dots,n-1\}$ generates the symmetric group $S(n)$. 
	The Coxeter diagram for $B(n)$  is described in Figure \ref{fig:BN}.
	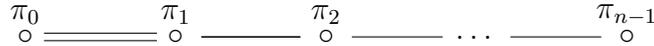
\begin{figure}[htp]
		\begin{center}
			\begin{tikzpicture} 
				[scale=.5,auto=left,every node/.style={circle}]
				\node (n7) at (1,3.6) {$\pi_{n-1}$};  
				\node (n6) at (1,3) {$\circ$};
				\node (n5) at (-3,3)  {$\dots$};
				\node (n3) at (-7,3.6)  {$\pi_2$};
				\node (n1) at (-7,3)  {$\circ$};
				\node (n4) at (-11,3.6)  {$\pi_{1}$};
				\node (n2) at (-11,3)  {$\circ$};
				\node (n8) at (-15,3.6)  {$\pi_{0}$};
				\node (n0) at (-15,3)  {$\circ$};
				
				\foreach \from/\to in {n2/n1, n1/n2,n1/n5,n5/n6}
				\draw (\from) -- (\to);
				\draw  (-14.5,3.1) -- (-11.5,3.1);   
				\draw  (-14.5,2.9) -- (-11.5,2.9);      
			\end{tikzpicture}
			\caption{The Coxeter diagram for $B(n).$}
			\label{fig:BN}
		\end{center}
	\end{figure}
	

	\noindent
	We define $\sigma \in B(n)$ in an irreducible form
	$$
	\sigma=\pi_{i_1} \cdots \pi_{i_{k}}, \qquad 0\leq i_1,\dots,i_k \leq n-1,
	$$
	i.e., in a form with the minimal length, and in this case let 
	\begin{align*}
		&l_1(\sigma)= \text{the number of the occurrences of the factor $\pi_0$  in $\sigma$}, \\
		&l_2(\sigma) =  \text{the number of the occurrences of all the factor of the form $\pi_1,\dots,\pi_{n-1}$ in $\sigma$}. 
	\end{align*}
	\begin{remark}
		These definitions do not depend on the way we express $\sigma$ in an irreducible form, and therefore $l_1(\sigma)$ and $l_2(\sigma)$ are well defined (see \cite[Proposition 1]{BozejkoSzwarc2003}). 
	\end{remark}

\begin{remark}

The length functions $l_1$ and $l_2$ are very related to the root system
of the Coxeter group of type B.
We can use the results of \cite[Proposition 1]{BozejkoSzwarc2003}
or \cite[Chapter
4.3]{Bourbaki}.
In our case, the group $B(n)$ is related to the root system of
type B  $$\Pi=\Pi_1\cup \Pi_2,$$
where  $\Pi_1=\{e_1,\dots,e_n\}$ and  $\Pi_2=\{e_i\pm e_j\mid i<j\}$. 
The related  length functions (see \cite[Proposition 1]{BozejkoSzwarc2003}) in our group $B(n)$ are the following
$$l_i (\sigma) = \#\{ \Pi_i  \cap \sigma^{-1} ( -  \Pi _i )\}, \quad i=1,2, $$
which
in our notations reads 
$R_2^{+}=\Pi_{2}$ and  $R_1^{+}    =  \Pi_{1}$.
More information on the subject can be found in the books \cite{BjornerBrenti,Humphreys}
\end{remark}

	\subsection{Orthogonal polynomials}
	In this subsection we remind basic facts about the
	orthogonal polynomials.  
	
	For a probability measure $\mu$ with finite moments of all orders, we can assign orthogonal polynomials $(P_n(x))_{n=0}^\infty$  
	with $\text{deg}\, P_n(x) =n$ 
	and the leading coefficient of each  $P_n(x)$ is $1$ i.e., monic. 
	It is known that they satisfy the recurrence relation
	\begin{align} \label{wielortogonalnerekursia}
		x P_n(x) = P_{n+1}(x) +\beta_n P_n(x) + \gamma_{n-1} P_{n-1}(x),\qquad n =0,1,2,\dots
	\end{align}
	with the convention that $P_{-1}(x)=0$. The coefficients $\beta_n$ and $\gamma_n$ are called \emph{Jacobi parameters} and satisfy $\beta_n \in \R$ and $\gamma_n \geq 0$. The continued fraction representation of the Cauchy transform can be expressed in terms of the Jacobi Parameters:
\[
\int_{\R}\frac{\mu(d t)}{z-t} = \dfrac{1}{z-\beta_0 -\dfrac{\gamma_0}{z-\beta_1-\dfrac{\gamma_1}{z- \beta_2 - \cdots}}}.
\]
	It is known that 
	\begin{equation}\label{eq54}
		\gamma_0 \cdots \gamma_n=\int_{\R}|P_{n+1}(x)|^2\mu(d x),\qquad n \geq 0.
	\end{equation} 
	Let 
	\begin{itemize}
		\item $[n]_q$ be the $q$-number 
		$
		[n]_\q:= 1+\q+\cdots+\q^{n-1},\qquad n \geq1
		$;
		\item  $(s;\q)_n$ be the $\q$-Pochhammer symbol
		$
		(s;q)_n:= \prod_{k=1}^n(1-s \q^{k-1}),\text{ }s \in \R, |\q|<1, n \geq1. 
		$
	\end{itemize}

	$(\alpha,q)$-Poisson of type B polynomials are defined by the recursion relations
	\begin{equation}\label{recursion}
		t Q_n^{(\s, \q)}(t) = Q_{n+1}^{(\s, \q)}(t) +[n]_q(1 + \s \q^{n-1})Q_{n-1}^{(\s, \q)}(t)+[n]_q(1 + \s \q^{n-1})Q_{n}^{(\s, \q)}(t), \qquad n=0,1,2,\dots,
	\end{equation}
	where $Q_{-1}^{{\s,\q}}(t)=0,Q_0^{{\s,\q}}(t)=1$ and $-1 \leq \s,\q \leq 1$.  There exists a probability measure $\mu_{\alpha,q}$ which is associated with orthogonal polynomials $Q_{ n}^{(\alpha,q)}$.
	\begin{remark}
	 \label{uwagimiara}   
	
	 The measure of orthogonality of the above polynomial sequence is not known (we were informed about this by professor Mourad E. H. Ismail). In special cases, we can identify this measure:
\begin{enumerate}
\item the measure $\mu_{\alpha,1}$ is the classical Poisson law; 
\item the measure $\mu_{0,0}$ is the Marchenko-Pastur distribution; 
\item the measure $\mu_{0,q}$ is the $q$-Poisson law and the orthogonal polynomials $Q_n^{(0,q)}(t)$ are called \emph{$q$-Poisson-Charlier polynomials} (see \cite{Ans01}); 
\item the measure $\mu_{\alpha,-1}$ is a non-symmetric Bernoulli distribution; 
\item the measure $\mu_{\alpha,0}$,   $\alpha\neq 0$ and $\alpha>-1$ is a two-state free Meixner distribution because its Jacobi parameters are independent of $n$ for $n\geq 2$ (see \cite{A03}). 
This corresponds to a probability measure $\mu_{\alpha,0}$ on $\R$ which is given by  
$$\begin{cases} \frac{\sqrt{4-(x-1)^2}}{p_\alpha(x)}{\bf 1}_{(-1,3)}(x)dx &\text{for  } \alpha \in (-1,0]\\ \frac{\sqrt{4-(x-1)^2}}{p_\alpha(x)}{\bf 1}_{(-1,3)}(x) dx+\lambda_\alpha\delta_{x_\alpha} &\text{for } \alpha \in (0,\infty) \end{cases}
,$$
where $p_\alpha(x)=\frac{2\pi}{\alpha+1} \left(-x^3\alpha+x^2\alpha^2+x(2\alpha^2+3\alpha+1)+(\alpha+1)^2\right)$ and 
\begin{align*}
    \lambda_\alpha&=\frac{\alpha^2 + \sqrt{2}\alpha\sqrt{\frac{\alpha^3 + (\alpha^2-1)\sqrt{\alpha(\alpha + 4)} + 2\alpha^2 - 3\alpha + 2}{\alpha}} + (\alpha-1)\sqrt{\alpha(\alpha + 4)} + \alpha}{2\alpha(\alpha^2 + (\alpha+3)\sqrt{\alpha(\alpha + 4)} + 5\alpha + 4)}
   \\ x_\alpha&=(\alpha+1)\frac{\alpha+\sqrt{\alpha(\alpha+4)}}{2\alpha},\qquad \alpha \in (0,\infty).
    \end{align*}
\end{enumerate}
\end{remark}
\begin{proof}[Proof of point (5)]
The corresponding  system of orthogonal polynomial has the form 
\begin{align*}
	t Q_n^{(\alpha, 0)}(t) &= Q_{n+1}^{(\alpha, 0)}(t) +(1+\alpha)Q_{n-1}^{(\alpha, 0)}(t)+(1+\alpha)Q_{n}^{(\alpha, 0)}(t) , \qquad  n=1;
	\\ t Q_n^{(\alpha, 0)}(t) &= Q_{n+1}^{(\alpha, 0)}(t) +Q_{n-1}^{(\alpha, 0)}(t)+Q_{n}^{(\alpha, 0)}(t), \qquad  n=2,3,\dots.
\end{align*} 
where  $Q_{-1}^{(\alpha, 0)}(t)=0,Q_0^{(\alpha, 0)}(t)=1, Q_1^{(\alpha, 0)}(t)=t,$ and $\alpha\in (-1,\infty)$. 
 The continued fraction representation of the Cauchy transform can be expressed in terms of the Jacobi Parameters:
\begin{align*}G_{\mu_{\alpha,0}}(z)&=
\int_{\R}\frac{\mu(d t)}{z-t} \\&= \dfrac{1}{z -\dfrac{1+\alpha}{z-(1+\alpha)-\dfrac{1}{z- 1 - \dfrac{1}{z- 1 - \cdots}}}}
\\&= \dfrac{1}{z -\dfrac{1+\alpha}{z-(1+\alpha)-G_\rho(z)}}
\intertext{where $\rho$ is the semicircular distribution with mean  and variance one. We now expand further and obtain}&= \dfrac{1}{z -\dfrac{1+\alpha}{z-(1+\alpha)-\frac
{z-1-\sqrt{(z-1)^2-4}}{2}}}
\\&=\frac{ z-(1+2\alpha)+\sqrt{(z-1)^2-4}}{z^2-z(1+2\alpha)-2(1+\alpha)+z\sqrt{(z-1)^2-4}}
\end{align*}
where the branch of the analytic square root should be determined by the condition
that $\Im (z) > 0\implies \Im G_{\mu_{\alpha,0}}(z) < 0$. We  choose a branch of $\sqrt{(z-1)^2-4}$ for $z$ in the
upper half-plane $\C^+$. We write $\sqrt{(z-1)^2-4}=\sqrt{z-3}\sqrt{z+1}$  and define each of $\sqrt{z-3}$
and $\sqrt{z+1}$ in $\C^+$. 
For  $z\in \C^+$, let $\theta_1$ be the angle between the x-axis and the line
joining $z$ to $3$; and $\theta_2$  the angle between the x-axis and the line joining $z$ to $-1$ that is $0<\theta_1;\theta_2 <\pi$. Then we define $\sqrt{z-3}\sqrt{z+1}$ to
be $\sqrt{|z-3|}\sqrt{|z+1|}e^{i(\theta_1+\theta_2)/2}=\sqrt{|(z-1)^2-4|}e^{i(\theta_1+\theta_2)/2}$.

This representation is useful to calculate the measure. 
We have 
\begin{align*}
 \lim_{\epsilon \to 0^+}\Im  \sqrt{(x+i\epsilon-1)^2-4}&=\lim_{\epsilon \to 0^+}\Im\sqrt{|(x+i\epsilon-1)^2-4|}\sin((\theta_1+\theta_2)/2)\\
 &=\lim_{\epsilon \to 0^+}\begin{cases}\Im  \sqrt{|(x-1)^2-4|} \times e^{i 0} &\text{for  } x \notin (-1,3) \\ \Im  \sqrt{|(x-1)^2-4|} \times e^{i\pi/2}    &\text{for } x \in (-1,3) \end{cases}\\&=\begin{cases} \sqrt{|(x-1)^2-4|} \times 0 &\text{for  } x \notin (-1,3) \\  \sqrt{|(x-1)^2-4|} \times 1=\sqrt{4-(x-1)^2}    &\text{for } x \in (-1,3). \end{cases}   
\end{align*}

The Cauchy
transform uniquely determines the measure, and there is an inversion formula called Stieltjes
inversion formula.  Let $x_\epsilon=x+i\epsilon$ and $f(z)=\sqrt{(z-1)^2-4},$  
then 
\begin{align*}
d\mu(x)&=-\frac{1}{\pi}\lim_{\epsilon \to 0^+}\Im G_{\mu}(x+i\epsilon)\\&=-\frac{1}{\pi}\lim_{\epsilon \to
  0^+}\Im \frac{ \overbracket{x_\epsilon-(1+2\alpha)}^a+f(x_\epsilon)}{x_\epsilon^2-x_\epsilon(1+2\alpha)-2(1+\alpha)+x_\epsilon f(x_\epsilon)} \frac{\overbracket{x_\epsilon^2-x_\epsilon(1+2\alpha)-2(1+\alpha)}^b- {x_\epsilon f(x_\epsilon)}}{x_\epsilon^2-x_\epsilon(1+2\alpha)-2(1+\alpha)- {x_\epsilon f(x_\epsilon)}}
  \\&=-\frac{1}{\pi}\lim_{\epsilon \to
  0^+}\Im \frac{ -a{x_\epsilon f(x_\epsilon)}+bf(x_\epsilon)+ab-{x_\epsilon }f(x_\epsilon)^2}{(x_\epsilon^2-x_\epsilon(1+2\alpha)-2(1+\alpha))^2-x_\epsilon^2 f^2(x_\epsilon)} 
\\&=-\frac{1}{\pi} \frac{-(\alpha+1)\sqrt{4-(x-1)^2}}{2\left(-x^3\alpha+x^2\alpha^2+(2\alpha^2+3\alpha+1)x+\alpha^2+2\alpha+1\right)} \quad\text{ for }x\in(-1,3).
\end{align*}

Now we shall see the discrete part of the measure $\mu_{\alpha,0}$. 
To determine the atoms, we should compute the  limits 
$
\lim_{\epsilon \to 0^+}i\epsilon G_\mu\left({x+i\epsilon}\right),
$  where we have to take the suitable branch of the square root. 
Taking care of the choice of the branches of
the analytic square root appeared in $G_{\mu_{\alpha,0}}(z) $, it follows  that considered limit is potentially nonzero  at real roots of
\begin{align*}
0&=z^2-z(1+2\alpha)-2(1+\alpha)+z\sqrt{(z-1)^2-4}
 \intertext{we use the branch of $\sqrt{(z-1)^2-4}$ defined before and obtain }
    &=\begin{cases} z^2-z(1+2\alpha)-2(1+\alpha)+z\sqrt{|(z-1)^2-4|}e^{i0}&\text{for  } z \geq 3;  \\  z^2-z(1+2\alpha)-2(1+\alpha)+z\sqrt{|(z-1)^2-4|}e^{i{\pi}/{2}}    &\text{for } z \in [-1,3);  
    \\   z^2-z(1+2\alpha)-2(1+\alpha)+z\sqrt{|(z-1)^2-4|}e^{i{\pi}}    &\text{for } z< -1 .  
    \end{cases} 
   \quad \begin{matrix}
$(I)$ &\\
$(II)$ & \\ $(III)$&
\end{matrix}
\end{align*}
The first equation $(I)$ can be rewritten as
\begin{align*}
(z+1)(-\alpha z^2+z(\alpha^2+\alpha)+\alpha^2+2\alpha+1)=0
\end{align*}
The equation $-\alpha x^2+x(\alpha^2+\alpha)+\alpha^2+2\alpha+1=0$ has two real solutions  for  $\alpha >0$ 
$$x_\alpha=(\alpha+1)\frac{\alpha+\sqrt{\alpha(\alpha+4)}}{2\alpha}\geq 3, \quad 
x'_\alpha=(\alpha+1)\frac{\alpha-\sqrt{\alpha(\alpha+4)}}{2\alpha}<3.
$$ 
The  roots $-1$ and $x'_\alpha$ are discarded because the appropriate branch of the square root required $z\geq 3.$   On the other hand the second  equation $(II)$
$$z^2-z(1+2\alpha)-2(1+\alpha)+iz\sqrt{(z-1)^2-4}=0$$
has one real roots $z=-1$. 
Finally, the last equation $(III)$ has no solution. In fact, the left-hand side of the  equation
$$z^2-z(1+2\alpha)-2(1+\alpha)=-z\sqrt{|(z-1)^2-4|}e^{i{\pi}}=z\sqrt{|(z-1)^2-4|},$$
is positive and the right-hand side is negative under $z<-1$ and $\alpha>-1$.
For $z= -1$ we have
$
\lim_{\epsilon \to 0^+}i\epsilon G_\mu\left({-1+i\epsilon}\right)=0,  
$ which can be trivially obtained by simple direct calculation. 
For $x_\alpha$ we have $\lim_{\epsilon \to 0^+}i\epsilon G_\mu\left({x_\alpha+i\epsilon}\right)=\lambda_\alpha.$ 
\end{proof}

	\section{The double Fock space  of type B}
	\label{subsec:Fock}
	 In this subsection we recall the construction of a double Fock space of type B, which is partially investigated in \cite{BE23}. 
	Let $H_\R$ be a separable real Hilbert space and let $H$ be its
	complexification with the inner product $\langle\cdot,\cdot\rangle,$ linear in the right component and anti-linear in the left. 
	A Hilbert space $\HH:=H\otimes {H}$ is the complexification of its
	real subspace $\HH_\R:=H_\R\otimes {H}_\R$, with the inner product
	\[\langle x\otimes y,\xi\otimes \eta  \rangle_{\HH} = \langle
	x,\xi\rangle\langle y,\eta\rangle.\]

	We define $\H:=H^{\otimes n} \otimes {{H}^{\otimes n}}$
 for $n\in \N$ and 
let $\F$ be the algebraic full Fock space over $\HH:$
	\begin{equation}
		\F:= \bigoplus_{n=0}^\infty \H= \bigoplus_{n=0}^\infty  H^{\otimes 2n}, 
	\end{equation} 
	with the convention that $\HH^{\otimes 0} =H^{\otimes 0} \otimes H^{\otimes 0}=\C\Omega \otimes \Omega$ is the one-dimensional normed space along with the unit vector $\Omega \otimes \Omega$ and free inner product $\langle\cdot,\cdot \rangle_{0,0}$. 
	We equip $\F$ with the inner product  
	\begin{align*}
	&\langle x_{\overline n} \otimes \cdots \otimes x_n, y_{\overline m} \otimes \cdots  \otimes y_m\rangle_{\alpha,q} := \delta_{m,n}\langle x_{\overline n} \otimes \cdots \otimes x_n,P_{\s,\q}^{(n)}  y_{\overline n}\otimes\cdots  \otimes y_{n}\rangle_{0,0}    
\end{align*}
where $P_{\s,\q}^{(n)}$ is the \emph{type B symmetrization operator} acting on the algebraic full Fock space
	\begin{align*}	&P_{\s,\q}^{(n)}:= \sum_{\sigma \in B(n)}\s^{l_1(\sigma)} \q^{l_2(\sigma)} \, \sigma,\qquad n \geq1, \\ 
		&P_{\s,\q}^{(0)}:= \id_{H^{\otimes 0}\otimes H^{\otimes 0}}. 
	\end{align*}  
and for  $\sigma \in B(n)$ we define  
 \begin{align*}
		\sigma  :\H&\to \H \\
		y_{\overline n}\otimes\cdots \otimes y_n &\mapsto y_{\sigma^{-1}(\overline n)}\otimes\cdots \otimes y_{\sigma^{-1}(n)}.
	\end{align*}


We use the notation $x_{\overline n}\otimes \cdots  \otimes x_{n}$  for the elements  of $\H$
namely
$$x_{\overline n}\otimes \cdots  \otimes x_{n}=x_{\overline n}\otimes\cdots \otimes x_{\overline 1} \otimes  x_1\otimes\cdots \otimes x_n.$$

	 \begin{remark}
	     Bożejko and Ejsmont \cite{BE23} showed that if $|\q|\leq 1$, $|\s|\leq 1$ and $(\s,\q)\notin\{(\pm 1,\pm 1),(0,\pm 1),(\pm1,0)\}$,  then considered  symmetrization  is strictly positive, $\ker {P_{\s,\q}^{(n)}}=\{0\},$ i.e.,
$$ \langle \xi,{P_{\s,\q}^{(n)}} \xi \rangle_{\s,\q} \geq c_n  \|\xi\|^2\quad \text{for } \xi \in \H \text{ and for some positive constant } c_n>0.$$ They also proved that operator $P^{(n)}_{\s,\q}$ has a kernel if and only if $(\s,\q)\in\{(\pm 1,\pm 1),(0,\pm 1),(\pm1,0)\}.$
	 \end{remark}

 Let $\q,\s \in (-1,1)$. The algebraic full Fock space $\F$ equipped with the inner product $\langle\cdot,\cdot \rangle_{\s,\q}$ is called the \emph{double Fock space of type B} and denoted by $\mathcal{F}_{\s,\q}(\HH)$. 
	
	For $x\in H$ let $l(x)$ and $r(x)$ be the free left and right
	annihilation and let $l^\ast(x)$ and $r^\ast(x)$ be the free left and right
	creation  operators on $H^{\otimes n} $,  respectively, defined by
	the equations
	\begin{align*}
		l^\ast(x)(x_1\otimes \dots \otimes x_n )&:=x\otimes x_1\otimes \dots \otimes x_n  ,
		\\
		l(x)(x_1\otimes \dots \otimes x_n )&:=\langle  x, x_1 \rangle  x_2\otimes \dots \otimes x_n , \\
		r^\ast(x)(x_1\otimes \dots \otimes x_n )&:= x_1\otimes \dots \otimes x_n \otimes x,\\
		r(x)(x_1\otimes \dots \otimes x_n )&:=\langle  x, x_n \rangle  x_1\otimes \dots \otimes x_{n-1}, 
		\intertext{where the adjoint is taken with respect to the free inner product. 
			The  left-right creation and  annihilation operators $\r^\ast(x\otimes y), \r(x\otimes y)$ on $\F$  are defined as follows: }
		\r^\ast(x\otimes y)(x_{\bar n}\otimes \dots \otimes  x_n)&:=l^\ast(x) 
		r^\ast( y) x_{\bar n}\otimes \dots \otimes x_n, \quad  &&\r^\ast(x\otimes y)\Omega \otimes \Omega:=x \otimes y, \\
		\r(x\otimes y)(x_{\bar n}\otimes \dots \otimes x_n )&:= l( x) r(y)x_{\bar n}\otimes \dots \otimes x_n  , \quad &&\r(x\otimes y)\Omega \otimes \Omega :=0,
	\end{align*}
	where $x_{\bar n}\otimes\dots \otimes x_n\in  \H,\text{ }
	n\geq 1$. 
	It holds that $[\r^\ast(x\otimes y)]^\ast = \r(x\otimes y)$ where the adjoint is taken with respect to $\langle \cdot, \cdot \rangle_{0,0}$. 

 The following proposition  \cite[Proposition 3.5]{BE23}  is useful to  calculate the annihilator and Poisson type operator.

	\begin{proposition}\label{prop1}
		We have the decomposition 
		\begin{equation}\label{decomposition}
			P^{(n)}_{\s,\q}=\left( \id \otimes P^{(n-1)}_{\s,\q}\otimes \id \right)R^{(n)}_{\s,\q}={R^{(n)}_{\alpha,q}}^*(I\otimes P^{(n-1)}_{\alpha,q}\otimes I) \text{~on $\H$}, \qquad n\geq 1, 
		\end{equation}
		where
		\begin{align*}
			R^{(n)}_{\s,\q} &= \id+\sum_{k=1}^{n-1}\q^{k}\pi_{n-1}\cdots \pi_{n-k} + \s \q^{n-1}\pi_{n-1} \pi_{n-2} \cdots \pi_{1}\pi_0\left(1+\sum_{k=1}^{n-1}\q^{k}\pi_{1}\cdots \pi_{k}\right), \quad n\geq 2,
			\\ R^{(1)}_{\s,\q} &=\id+\s\pi_0.
		\end{align*}
  and the adjoint ${R^{(n)}_{\alpha,q}}^*$ is taken with respect to $\langle\cdot,\cdot \rangle_{0,0}$.
In 	
\cite[Lemma 2.8]{BEH15}, we obtain the following estimation of norm 
\begin{align} \label{estymationR}
&\|R_{\alpha,q}^{(n)}\|_{0,0} \leq (1+|\alpha| |q|^{n-1})[n]_q,\qquad n \geq 1. 
\end{align}
\end{proposition}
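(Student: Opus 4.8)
The plan is to verify the identity $P^{(n)}_{\s,\q}=\bigl(\id\otimes P^{(n-1)}_{\s,\q}\otimes\id\bigr)R^{(n)}_{\s,\q}$ by decomposing the sum over $B(n)$ according to cosets, and then to deduce the second (adjoint) form from the first together with the self-adjointness of $P^{(n)}_{\s,\q}$. First I would recall that the parabolic subgroup $B(n-1)\leq B(n)$ generated by $\pi_0,\dots,\pi_{n-2}$ acts only on the ``inner'' tensor legs $x_{\overline{n-1}}\otimes\cdots\otimes x_{n-1}$, so that for $\tau\in B(n-1)$ one has, as operators on $\H$, the equality $\tau=\id\otimes\tau\otimes\id$ with $\tau$ on the right acting on $H^{\otimes(n-1)}\otimes H^{\otimes(n-1)}$; moreover the length functions restrict correctly, $l_i(\tau)$ computed in $B(n)$ agrees with that in $B(n-1)$. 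Hence $\id\otimes P^{(n-1)}_{\s,\q}\otimes\id=\sum_{\tau\in B(n-1)}\s^{l_1(\tau)}\q^{l_2(\tau)}\tau$ as an operator on $\H$. The key combinatorial input is then a choice of coset representatives: every $\sigma\in B(n)$ factors uniquely as $\sigma=\tau\cdot\rho$ with $\tau\in B(n-1)$ and $\rho$ ranging over a transversal of $B(n-1)\backslash B(n)$, and crucially this factorization is \emph{length-additive}, $l_i(\sigma)=l_i(\tau)+l_i(\rho)$ for $i=1,2$.

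The heart of the argument is identifying the transversal and checking length-additivity. The natural transversal consists of the minimal-length representatives of the right cosets $B(n-1)\sigma$; these are exactly the elements $\rho$ such that $\ell(s\rho)>\ell(\rho)$ for every generator $s=\pi_0,\dots,\pi_{n-2}$ of $B(n-1)$. I would show that this set of minimal coset representatives is precisely
\[
\Bigl\{\,\id\Bigr\}\cup\Bigl\{\pi_{n-1}\pi_{n-2}\cdots\pi_{n-k}:1\le k\le n-1\Bigr\}\cup\Bigl\{\pi_{n-1}\cdots\pi_1\pi_0\Bigr\}\cup\Bigl\{\pi_{n-1}\cdots\pi_1\pi_0\pi_1\cdots\pi_k:1\le k\le n-1\Bigr\},
\]
by a standard root-system computation: $\rho$ is a minimal right-coset representative iff $\rho^{-1}$ sends every simple root of $B(n-1)$ to a positive root, and one checks this holds exactly for the $2n$ elements listed, which are in bijection with the cosets since $[B(n):B(n-1)]=2n$. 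For each such $\rho$, its stated reduced word has length equal to the number of letters, so reading off $l_1$ (the number of $\pi_0$'s) and $l_2$ (the number of other letters) gives the exponents $\q^k$, $\s\q^{n-1}$, $\s\q^{n-1}\q^k$ appearing in $R^{(n)}_{\s,\q}$. Grouping $\sum_{\sigma\in B(n)}=\sum_{\tau\in B(n-1)}\sum_{\rho}$ and using length-additivity then yields
\[
P^{(n)}_{\s,\q}=\sum_{\tau}\sum_\rho \s^{l_1(\tau)+l_1(\rho)}\q^{l_2(\tau)+l_2(\rho)}\tau\rho=\Bigl(\sum_\tau \s^{l_1(\tau)}\q^{l_2(\tau)}\tau\Bigr)\Bigl(\sum_\rho \s^{l_1(\rho)}\q^{l_2(\rho)}\rho\Bigr)=\bigl(\id\otimes P^{(n-1)}_{\s,\q}\otimes\id\bigr)R^{(n)}_{\s,\q},
\]
which is the first equality; the $n=1$ case is immediate since $B(1)=\{\id,\pi_0\}$.

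For the second equality, I would use that $P^{(n)}_{\s,\q}$ is self-adjoint with respect to $\langle\cdot,\cdot\rangle_{0,0}$ (each $\sigma$ is a permutation of tensor legs, hence unitary with $\sigma^*=\sigma^{-1}$, and $\sigma\mapsto\sigma^{-1}$ preserves $l_1,l_2$), and likewise $\id\otimes P^{(n-1)}_{\s,\q}\otimes\id$ is self-adjoint. Taking adjoints of the first identity gives $P^{(n)}_{\s,\q}=(P^{(n)}_{\s,\q})^*={R^{(n)}_{\s,\q}}^*\bigl(\id\otimes P^{(n-1)}_{\s,\q}\otimes\id\bigr)$, which is the claimed form. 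The norm estimate \eqref{estymationR} is quoted from \cite[Lemma 2.8]{BEH15} and needs no reproof. The main obstacle is the first step: establishing that the listed $2n$ elements are exactly the minimal-length right-coset representatives and that the multiplication $B(n-1)\times(\text{transversal})\to B(n)$ is length-additive for \emph{both} statistics $l_1,l_2$ simultaneously; this is where the type-$B$ root system $\Pi_1\cup\Pi_2$ must be used carefully, tracking separately the action of $\rho^{-1}$ on $\Pi_1=\{e_i\}$ and on $\Pi_2=\{e_i\pm e_j\}$ to see that no root of either type is sent to a negative root by the $B(n-1)$-part when the coset representative is minimal.
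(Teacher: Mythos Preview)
The paper does not prove this proposition; it is quoted from \cite[Proposition 3.5]{BE23} (and the argument there goes back to \cite{BEH15}). Your coset-decomposition argument via minimal-length representatives for the parabolic subgroup $B(n-1)\leq B(n)$ is exactly the standard route and is essentially what those references do, so your proposal is correct and aligned with the source.

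One small remark on the point you flag as the ``main obstacle'': once you know the \emph{total} Coxeter length is additive in the factorization $\sigma=\tau\rho$ (which is automatic for minimal coset representatives of a parabolic subgroup), the separate additivity $l_i(\sigma)=l_i(\tau)+l_i(\rho)$ for $i=1,2$ follows immediately, since a reduced expression for $\sigma$ is then obtained by concatenating reduced expressions for $\tau$ and $\rho$, and $l_1,l_2$ simply count occurrences of specific generators in any reduced word (cf.\ \cite[Proposition 1]{BozejkoSzwarc2003}). So there is no need to track the two root-subsets $\Pi_1,\Pi_2$ separately at that stage.
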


		For $x\otimes y \in \HH_\R$ we define $\B^\ast(x\otimes y):=
		\r^\ast(x\otimes y)$ and we can calculate its adjoint operator with
		respect to the inner product $\langle\cdot,\cdot \rangle_{\s,\q}$ (see \cite[Proposition 3.6]{BE23})  in terms of $R_{\s,\q}^{(n)}$, namely \begin{equation}
			\B(x\otimes y)=\r(x\otimes y) R^{(n)}_{\s,\q} \text{~on $\H$}.
		\end{equation} 
  The operators $\B^\ast(x\otimes y)$ and $\B(x\otimes y)$ are called \emph{double creation and  double annihilation operator of type B, respectively}.

		Using the above notation we can decompose $\B(x\otimes y)$ into the positive part $p_\q$ and the negative part  $\ell_{\q}$  as
		$$
		\B(x\otimes y)= p_\q(x\otimes y)+ \s  \ell_{\q}(x\otimes y), \qquad x\otimes y \in \HH, 
		$$ 
		where 
		\begin{align}
			p_\q(x\otimes y)\eta &= \sum_{k=1}^n \q^{n-k}\langle x, x_{\bar{k}}\rangle  \langle y, x_k \rangle\, x_{\bar n }\otimes \cdots \otimes \check{x}_{\bar k} \otimes \cdots \otimes x_{\bar 1} \otimes  x_1\otimes \cdots \otimes \check{x}_k \otimes \cdots \otimes x_n, \label{rq}
			\\
			\ell_{\q}(x\otimes y)\eta &=\q^{n-1}\sum_{k=1}^n  \q^{k-1}\langle x, x_{{k}}\rangle  \langle y, x_{\bar{k}}\rangle\, x_{\bar n }\otimes \cdots \otimes \check{x}_{\bar k} \otimes \cdots \otimes x_{\bar 1} \otimes  x_1\otimes \cdots \otimes \check{x}_k \otimes \cdots \otimes x_n.\label{lq}
		\end{align}
		where $\eta=x_{\bar n }\otimes \cdots \otimes x_n.$

\begin{remark}
    We can observe that for $x\otimes y, y \otimes x \in \HH_\R$ and  $\eta=x_{\bar n }\otimes \cdots \otimes x_n \in \H$
    \begin{align}
    p_\q(x\otimes y)\eta = \q^{2n-2}\ell_{\q^{-1}}(y\otimes x)\eta.
    \end{align}
\end{remark}
The norm of the creation operators is calculated in  \cite[Theorem 3.9]{BE23}. 
	\begin{theorem} Suppose that $x\otimes y \in \HH_{\R}, x \otimes y \neq0$. 
		\begin{enumerate}
			\item\label{item1} If  $(\s,\q)\in A $, where $A=[0 ,1]\times (-1,0]$, then 
			\begin{equation}\label{eq01}
				\|\B^\ast(x\otimes y)\|_{\s,\q}= \sqrt{\|x\|^2\|y\|^2  + \s \langle x,y\rangle^2}. 
			\end{equation}
			\item\label{item2} If  $(\s,\q)\in B $, where $B= [-1 ,0)\times (-1,0]$, then 
			\begin{equation}\label{eq02}
				\frac{\|x\|\|y\|}{\sqrt{1-\q}}\leq \|\B^\ast(x\otimes y)\|_{\s,\q}\leq \|x\|\|y\|. 
			\end{equation}
			\item\label{item3} If $(\s,\q)\in  C $, where $C= \{(\s,\q)\mid  |\s| \leq \q <1\}$, then 
			$$
			\|\B^\ast(x\otimes y)\|_{\s,\q}= \frac{\|x\|\|y\|}{\sqrt{1-\q}}. 
			$$
			\item\label{item5} Otherwise, if $(\s,\q)\in [-1 ,1]\times (-1,1)\setminus (A\cup B\cup C) $
			$$
			\frac{\|x\|\|y\|}{\sqrt{1-\q}}\leq \|\B^\ast(x\otimes y)\|_{\s,\q}\leq \sqrt{\frac{1+|\s|}{1-\q}}\|x\|\|y\|. 
			$$
		\end{enumerate}
	\end{theorem}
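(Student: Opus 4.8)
The plan is to reduce the problem to a single scalar recursion for the norms of the restrictions of $\B^\ast(x\otimes y)$ to the homogeneous layers. Since the subspaces $\H$ are mutually orthogonal for $\langle\cdot,\cdot\rangle_{\s,\q}$ and $\B^\ast(x\otimes y)$ raises the degree by one, one has $\|\B^\ast(x\otimes y)\|_{\s,\q}^2=\sup_{n\ge 0}b_n$ with $b_n:=\|N_n\|_{\s,\q}$, where $N_n:=\B(x\otimes y)\B^\ast(x\otimes y)$, viewed as a positive operator on $(\H,\langle\cdot,\cdot\rangle_{\s,\q})$ (recall $\B(x\otimes y)$ is the $\langle\cdot,\cdot\rangle_{\s,\q}$-adjoint of $\B^\ast(x\otimes y)$). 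By homogeneity in $x$ and in $y$ it is enough to treat $\|x\|=\|y\|=1$; I then set $\rho:=\langle x,y\rangle^2\in[0,1]$ and assume $\dim H_\R\ge 2$, the one-dimensional case being a direct computation.

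First I would establish the operator identity, for $n\ge 1$,
\[
N_n=c_n\, I_{\H}+\q\, M_n,\qquad c_n:=1+\s \q^{2n}\rho,\qquad M_n:=\B^\ast(x\otimes y)\B(x\otimes y)\ \text{ on }\ \H,
\]
together with $N_0=(1+\s\rho)I_{\mathcal K_0}$. To prove it one writes $\B^\ast(x\otimes y)\eta=x\otimes\eta\otimes y\in\mathcal K_{n+1}$ and applies $\B(x\otimes y)=p_\q(x\otimes y)+\s\,\ell_\q(x\otimes y)$ to it through the explicit expressions \eqref{rq}--\eqref{lq}: the two contributions in which the annihilator contracts the two outermost tensor legs, those carrying $x$ and $y$, produce precisely $c_n\eta$ (with weights $1$ and $\s\q^{2n}$, since $\langle x,x\rangle\langle y,y\rangle=1$ and $\langle x,y\rangle\langle y,x\rangle=\rho$), while all remaining contributions, after one factor $\q$ is extracted, reassemble into $\B^\ast(x\otimes y)$ applied to $\q\,\B(x\otimes y)\eta$, i.e.\ into $\q M_n\eta$. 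I expect this contraction bookkeeping to be the only genuinely laborious part; what follows is spectral theory and elementary $\q$-number estimates.

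Granting the identity, I would note that $\B(x\otimes y)\colon\H\to\mathcal K_{n-1}$ has $\B^\ast(x\otimes y)\colon\mathcal K_{n-1}\to\H$ as its $\langle\cdot,\cdot\rangle_{\s,\q}$-adjoint, so $M_n$ and $N_{n-1}$ share their nonzero spectrum; hence $M_n\ge 0$ and $\|M_n\|=b_{n-1}$. Feeding this back: for $\q\in[0,1)$, taking the supremum of the (positive) spectrum gives $b_n=c_n+\q b_{n-1}$, which telescopes from $b_0=1+\s\rho$ to the closed form $b_n=[n+1]_\q(1+\s\rho\,\q^{n})$; for $\q\in(-1,0)$ one has $\mathrm{spec}(N_n)\subset[\,c_n+\q b_{n-1},\,c_n\,]$, and since $N_n\ge 0$ forces $c_n+\q b_{n-1}\ge 0$ while $0\in\mathrm{spec}(M_n)$ for $n\ge1$ (any $v^{\otimes 2n}$ with $v\perp x$ lies in $\ker\B(x\otimes y)$ and has nonzero $\langle\cdot,\cdot\rangle_{\s,\q}$-norm, $P^{(n)}_{\s,\q}$ being strictly positive for $\q\in(-1,0)$), one obtains $b_n=c_n=1+\s\rho\,\q^{2n}$ for $n\ge1$. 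It then remains to evaluate $\sup_{n\ge0}b_n$ in the four regions, using only $|\s|\le1$, $\rho\le1$, $[n+1]_\q\uparrow(1-\q)^{-1}$ for $\q\in[0,1)$, and $[n]_\q\in(0,1)$ for $n\ge2$, $\q\in(-1,0)$: on $A$ one checks $b_n\le b_0$ for all $n$, so the norm is $\sqrt{1+\s\rho}=\sqrt{\|x\|^2\|y\|^2+\s\langle x,y\rangle^2}$; on $B$ one has $b_n\uparrow 1$, so the norm equals $\|x\|\|y\|$, which lies in the stated interval because $1-\q\ge1$; on $C$ the hypothesis $|\s|\le\q$ gives $b_n\le(1-\q)^{-1}$ with $b_n\to(1-\q)^{-1}$, so the norm is $\|x\|\|y\|/\sqrt{1-\q}$; and on the complementary region ($0<\q<1$, $|\s|>\q$) one reads off $b_n\le\tfrac{1+|\s|}{1-\q}$ together with $b_n\to\tfrac{1}{1-\q}$, which is exactly the asserted two-sided estimate. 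The non-normalized statements then follow by homogeneity.
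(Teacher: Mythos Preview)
The paper does not prove this theorem; it is quoted verbatim as \cite[Theorem 3.9]{BE23} and used as background. So there is no in-paper proof to compare against, and your task reduces to whether your argument is internally sound.

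Your approach is correct and is in fact the standard one for this kind of result (it is the type-B analogue of the Bo\.zejko--Speicher computation for the $q$-Fock space, and very likely the method used in \cite{BE23} as well). The commutation identity
\[
\B(x\otimes y)\B^\ast(x\otimes y)\big|_{\H}=(1+\s\rho\,q^{2n})\,I_{\H}+q\,\B^\ast(x\otimes y)\B(x\otimes y)\big|_{\H}
\]
follows exactly as you describe from splitting off the $k=n+1$ term in each of \eqref{rq} and \eqref{lq}; the remaining sum is literally $q$ times $\B^\ast$ applied to $p_q$ (resp.\ $\ell_q$) of $\eta$, so the bookkeeping you flag as ``laborious'' is in fact a one-line computation. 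The closed form $b_n=[n+1]_q(1+\s\rho\,q^n)$ for $q\ge 0$ and $b_n=1+\s\rho\,q^{2n}$ for $q<0$, $n\ge1$, then gives all four cases as you indicate. Your argument in region $B$ actually yields the sharper conclusion $\|\B^\ast(x\otimes y)\|_{\s,\q}=\|x\|\|y\|$ rather than the two-sided bound stated; this is consistent with the theorem (the lower bound $\|x\|\|y\|/\sqrt{1-q}\le\|x\|\|y\|$ is trivial for $q\le 0$).

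Two small caveats worth making explicit: (i) the appeal to $0\in\mathrm{spec}(M_n)$ via a vector $v\perp x$ needs $\dim H_\R\ge 2$, which you note but do not carry out separately---in the one-dimensional case $\rho=1$ and the recursion $b_n=c_n+q b_{n-1}$ holds for all $q$ since $M_n$ is then scalar; (ii) at the boundary $\s=-1$, $\rho=1$ one has $b_0=0$ and $\B^\ast(x\otimes y)\Omega\otimes\Omega$ is a null vector, reflecting the kernel of $P^{(1)}_{\s,q}$; the statement should really be read on the interior $\s\in(-1,1)$ where the inner product is nondegenerate, and your argument is clean there.
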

\subsection{Gauge operators} In this subsection we define a second differential quantization operator.
Let $T$ and $\bar T$ be the operators in the Hilbert space $H$ with a dense domain ${D}$. 
 This in particular means
that there is no relation between operators $T$ and $\bar T$. 
We also assume  that $\bar T({ {D}} )\subset { {D} }$, $T({D}) \subset {D}$, $\mc{D}:= D \otimes D$ and $\mc{D}_n:= D^{\otimes n} \otimes D^{\otimes n}$. 
The following gauge operator  is motivated by the papers  \cite{Ans01,Ejsmont1,Ejsmont23}.
First we introduce an operator which acts on  double Fock space, as
\begin{align*}
& p_0(\T) \Omega \otimes  \Omega = 0, \\
& p_0(\T) x_{\bar n} \otimes \ldots \otimes x_{\bar 1} \otimes x_1 \otimes  \ldots \otimes x_n = \bar T(x_{\bar n}) \otimes \ldots \otimes x_{\bar 1} \otimes x_1 \otimes  \ldots \otimes T(x_n),
\end{align*}
where $\T$ is an operator on $\HH$ with dense domain $\mc{ D}. $ 
The adjoint of this operator satisfies $\langle p_0(\T)f | \zeta \rangle_{0,0}=\langle f | p_0((\T)^*)\zeta \rangle_{0,0}$, and allows us to define a gauge operator (preservation or differential second quantization).

\begin{definition}
    The \emph{gauge operator} $p(\T)$ is an operator on $\F$ with dense domain $\FD$ defined by 
\begin{align*}
& p_{\alpha,q}(\T) \Omega \otimes \Omega = 0, \\
& p_{\alpha,q} (\T) = p_0(\T) R^{(n)}_{\alpha,q}.
\end{align*}
\end{definition}
 If it is evident from the context, then let us omit in the notation parameters $\alpha$ and $q$ i.e. $p_{\alpha,q}(\T)=p(\T)$.
\noindent We also introduce the notation 
\begin{align}
& \r_q^{\T} :=p_0(\T)\Bigg(1+\sum_{k=1}^{n-1}q^{k}\pi_{n-1}\cdots \pi_{n-k}\Bigg) \label{rqT}, \\
& \ell_{q}^{N,\T}:= p_0(\T)q^{n-1}\Bigg(\pi_{n-1} \pi_{n-2} \cdots \pi_{1}\pi_0\left[1+\sum_{k=1}^{n-1}q^{k}\pi_{1}\cdots \pi_{k}\right]\Bigg), \label{lqT}
\end{align}
i.e. $p(\T)=\r_q^{\T} +\alpha \ell_{q}^{N,\T}$.
 We can rewrite the action of $\r_q^{\T}$ and $\ell_q^{N,\T}$ on $\H$ as
\begin{align*} 
\r_q^{\T} (\eta) =\sum_{k=1}^n q^{n-k} \bar T({x}_{\bar k})\otimes x_{\bar n} \otimes \ldots \otimes \check{x}_{\bar k} \otimes \ldots \otimes x_{\bar 1} \otimes x_1\otimes \cdots \otimes \check{x}_k \otimes \cdots \otimes x_n\otimes T(x_k),
\end{align*}
\begin{align*}
\ell_q^{N,\T}(\eta)&= q^{n-1}\sum_{k=1}^n q^{k-1} \bar T(x_k)\otimes x_{\bar n} \otimes \ldots \otimes \check{x}_{\bar k} \otimes \ldots \otimes x_{\bar 1} \otimes x_1\otimes \cdots \otimes \check{x}_k \otimes \cdots \otimes x_n\otimes T(x_{\bar k}), 
\end{align*}
where $\eta=x_{\bar n }\otimes \cdots \otimes x_n.$
In the last equations we used the relation  
\begin{figure}[h]
\begin{center}
 \begin{tikzpicture}[thick,font=\small,scale=.9]
     \path 
           (-8,0) node[] (bc) {$-n$}
           (-8,0.4) node[] (pom1) {$\blacktriangledown$}
           (-6.3,0) node[] (d) {{$...$}}
           (-4.5,0) node[] (g) {$-(n-k)$}
           (-2.75,0) node[] (gg) {{$...$}}
           (-2,0) node[] (gg) {$- 1$}
           (0,0) node[] (bcb) {1} 
           (5.9,0.4) node[] (pom1) {$\blacktriangledown$}
           (0.72,0) node[] (d) {{$...$}}
           (2.4,0) node[] (gd) {$n-k$}
      (4.2,0) node[] (gg) {{$...$}}
           (-1,2.5) node[] (wed) {$ \pi_{n-1}\cdots \pi_{n-k}=\bigl(\begin{smallmatrix}
  -n & -(n-1) & \cdots & -(n-k) & \cdots & -1 & 1 & \cdots & n-k & \cdots & n-1 & n \\
  -(n-k) & -n & \cdots & -(n-k+1) & \cdots & -1 & 1 & \cdots & n-k+1 & \cdots & n & n-k
 \end{smallmatrix}\bigr)$}
           (5.9,0) node[] (ggg) {$n$};
       \draw[thick] (g) -- +(0,1) -| (bc);
       \draw[thick] (ggg) -- +(0,1) -| (gd);
   \end{tikzpicture}
   \end{center}  
\begin{center}
 \begin{tikzpicture}[thick,font=\small,scale=.9]
     \path 
           (-8,0) node[] (bc) {$-n$}
           (-8,0.4) node[] (pom1) {$\blacktriangledown$}
           (-6.3,0) node[] (d) {{$...$}}
           (-4.5,0) node[] (g) {$-(n-k)$}
           (-2.75,0) node[] (gg) {{$...$}}
           (-2,0) node[] (gg) {$- 1$}
           (0,0) node[] (bcb) {1} 
           (5.9,0.4) node[] (pom1) {$\blacktriangledown$}
           (0.72,0) node[] (d) {{$...$}}
           (2.4,0) node[] (gd) {$n-k$}
      (4.2,0) node[] (gg) {{$...$}}
           (-1,2.5) node[] (wed) {$ 
\pi_{n-1} \pi_{n-2} \cdots \pi_{1}\pi_0\pi_{1}\cdots \pi_{k}=\bigl(\begin{smallmatrix}
  -n & -(n-1) & \cdots & -(k+1) & \cdots & -1 & 1 & \cdots & k+1 & \cdots & n-1 & n \\
  k+1 & -n & \cdots & -(k+2) & \cdots & -1 & 1 & \cdots  & k+2 & \cdots & n & -(k+1)
 \end{smallmatrix}\bigr)$}
           (5.9,0) node[] (ggg) {$n$};
       \draw[thick] (bc) -- +(0,1) -| (gd);
       \draw[thick] (g) -- +(0,1.5) -| (ggg);
   \end{tikzpicture}
   \end{center}  
\begin{center}
 \begin{tikzpicture}[thick,font=\small,scale=.9]
     \path 
           (-8,0) node[] (bc) {$-n$}
           (-8,0.4) node[] (pom1) {$\blacktriangledown$}
           (-6.3,0) node[] (d) {{$...$}}
           (-4.5,0) node[] (g) {$-(n-k)$}
           (-2.75,0) node[] (gg) {{$...$}}
           (-2,0) node[] (ggk) {$- 1$}
           (0,0) node[] (bcb) {1} 
           (5.9,0.4) node[] (pom1) {$\blacktriangledown$}
           (0.72,0) node[] (d) {{$...$}}
           (2.4,0) node[] (gd) {$n-k$}
      (4.2,0) node[] (gg) {{$...$}}
           (-1,2.5) node[] (wed) {$ 
\pi_{n-1} \pi_{n-2} \cdots \pi_{1}\pi_0=\bigl(\begin{smallmatrix}
   -n & -(n-1) & \cdots & -2 & -1 & 1 & 2 &\cdots & n-1 & n \\
 1 & -n & \cdots & -3 & -2 &  2 & 3 &\cdots & n & -1
 \end{smallmatrix}\bigr)$}
           (5.9,0) node[] (ggg) {$n$};
       \draw[thick] (bc) -- +(0,1) -| (bcb);
       \draw[thick] (ggk) -- +(0,1.5) -| (ggg);
   \end{tikzpicture}
   \end{center}
\end{figure}

\newpage
\begin{remark}
    Let $x_{\bar n}=x_n,\dots, x_{\bar1}=x_1$, then $\eta=x_n\otimes x_{n-1}\otimes x_1\otimes x_1 \otimes x_{n-1}\otimes x_n$ and
    \begin{align*}
    \r_q^{\T} (\eta)=q^{2n-2} \ell_{q^{-1}}^{N,\T}(\eta).
    \end{align*}
    
\end{remark}
\begin{Prop} \label{Prop:samosprzezone}If $\T$ is essentially self-adjoint on a dense domain $\mc{D}$ and $(\T)(\mc{D}) \subset \mc{D}$, then $p(\T)$ is essentially self-adjoint on the dense domain $\FD$.
\end{Prop}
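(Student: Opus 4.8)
The plan is to pass to the $n$-particle layers and, on each of them, to compute the $\langle\cdot,\cdot\rangle_{\alpha,q}$-adjoint of $p(\T)$ by hand. First I would check that $p(\T)$ is symmetric on $\FD$. On the layer $\H$ one has $p(\T)=p_0(\T)R^{(n)}_{\alpha,q}$, and $p_0(\T)$ acts only on the two outermost tensor legs --- it is, up to a permutation of the $2n$ legs, the symmetric operator $\T$ acting on one pair of legs and the identity on the remaining $2n-2$; in particular $p_0(\T)$ is symmetric for $\langle\cdot,\cdot\rangle_{0,0}$ and commutes with $\id\otimes P^{(n-1)}_{\alpha,q}\otimes\id$, since the latter acts only on the inner $2n-2$ legs. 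Feeding this together with the two-sided factorisation $P^{(n)}_{\alpha,q}=\big(\id\otimes P^{(n-1)}_{\alpha,q}\otimes\id\big)R^{(n)}_{\alpha,q}={R^{(n)}_{\alpha,q}}^{*}\big(\id\otimes P^{(n-1)}_{\alpha,q}\otimes\id\big)$ of \Cref{prop1} into $\langle p(\T)\xi,\eta\rangle_{\alpha,q}=\langle p_0(\T)R^{(n)}_{\alpha,q}\xi,P^{(n)}_{\alpha,q}\eta\rangle_{0,0}$ and moving $p_0(\T)$ across, a short manipulation yields $\langle p(\T)\xi,\eta\rangle_{\alpha,q}=\langle\xi,p(\T)\eta\rangle_{\alpha,q}$ for $\xi,\eta\in\mc{D}_{n}$. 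Since $p(\T)$ leaves each $\H$ invariant and $\FD$ is the algebraic direct sum of the $\mc{D}_{n}$, essential self-adjointness of $p(\T)$ on $\FD$ is equivalent to essential self-adjointness, for each $n$, of $p(\T)^{(n)}:=p_0(\T)R^{(n)}_{\alpha,q}$ on $\mc{D}_{n}$ for $\langle\cdot,\cdot\rangle_{\alpha,q}$.

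Fix $n$. I would then assemble three ingredients. First, $p_0(\T)^{(n)}$ is essentially self-adjoint on $\mc{D}_{n}$ for $\langle\cdot,\cdot\rangle_{0,0}$: up to a permutation of legs it equals $\overline{\T}\otimes\id$ acting on $\mc{D}\otimes_{\mathrm{alg}}D^{\otimes(2n-2)}=\mc{D}_{n}$, and this is essentially self-adjoint because $\T$ is on $\mc{D}$ by hypothesis (tensoring with the identity on the inner legs does not spoil it). Second, because $(\alpha,q)\in(-1,1)^{2}$ avoids all the excluded pairs, the remark after \Cref{prop1} gives that $P^{(n)}_{\alpha,q}$ and $P^{(n-1)}_{\alpha,q}$, hence $Q:=\id\otimes P^{(n-1)}_{\alpha,q}\otimes\id$, are bounded, strictly positive and invertible, and then $R^{(n)}_{\alpha,q}=Q^{-1}P^{(n)}_{\alpha,q}$ is bounded and invertible; moreover all of $P^{(n)}_{\alpha,q}$, $Q$, $R^{(n)}_{\alpha,q}$, and their inverses lie in the image of the finite-dimensional algebra $\C[B(n)]$, hence are finite linear combinations of the $\sigma\in B(n)$, and therefore map $\mc{D}_{n}$ into $\mc{D}_{n}$. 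Third, $Q$ commutes with $\overline{p_0(\T)^{(n)}}$ (disjoint legs, $Q$ bounded), so $Q$ preserves $D\big(\overline{p_0(\T)^{(n)}}\big)$.

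With these in hand the adjoint becomes explicit. Using that $P^{(n)}_{\alpha,q}$ is a bounded bijection of $\H$, that $R^{(n)}_{\alpha,q}$ is bounded invertible and preserves $\mc{D}_{n}$, and that $p_0(\T)^{(n)}$ is essentially self-adjoint, one gets $D\big((p(\T)^{(n)})^{*}\big)=(P^{(n)}_{\alpha,q})^{-1}\big(D(\overline{p_0(\T)^{(n)}})\big)$ with $(p(\T)^{(n)})^{*}v=(P^{(n)}_{\alpha,q})^{-1}{R^{(n)}_{\alpha,q}}^{*}\,\overline{p_0(\T)^{(n)}}\,P^{(n)}_{\alpha,q}v$, while $D\big(\overline{p(\T)^{(n)}}\big)=(R^{(n)}_{\alpha,q})^{-1}\big(D(\overline{p_0(\T)^{(n)}})\big)$ with $\overline{p(\T)^{(n)}}v=\overline{p_0(\T)^{(n)}}R^{(n)}_{\alpha,q}v$. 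Since $(P^{(n)}_{\alpha,q})^{-1}=(R^{(n)}_{\alpha,q})^{-1}Q^{-1}$ and $Q^{-1}$ preserves $D(\overline{p_0(\T)^{(n)}})$ by the third ingredient, the two domains coincide; and using ${R^{(n)}_{\alpha,q}}^{*}Q=P^{(n)}_{\alpha,q}=QR^{(n)}_{\alpha,q}$ together with the commutation of $Q$ and $\overline{p_0(\T)^{(n)}}$, the two expressions for the action coincide as well, so $(p(\T)^{(n)})^{*}=\overline{p(\T)^{(n)}}$, i.e.\ essential self-adjointness. I expect the crux to be precisely that $R^{(n)}_{\alpha,q}$ does \emph{not} commute with $p_0(\T)^{(n)}$ --- the rotations $\pi_{n-1},\dots,\pi_{1},\pi_{0}$ mix the outermost legs, on which $\bar T$ and $T$ act, with the inner ones --- so one cannot simply transport essential self-adjointness through a similarity; the way around it is the one commutativity that does survive, namely that $p_0(\T)^{(n)}$ commutes with the \emph{inner} symmetrizer $\id\otimes P^{(n-1)}_{\alpha,q}\otimes\id$, which is exactly the difference between the two factorisations of $P^{(n)}_{\alpha,q}$ in \Cref{prop1}. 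The only other point needing care is that $(P^{(n)}_{\alpha,q})^{\pm1}$ and $(R^{(n)}_{\alpha,q})^{\pm1}$ send $\mc{D}_{n}$ into $\mc{D}_{n}$, which is where finiteness of the group $B(n)$ is used.
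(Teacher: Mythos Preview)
Your proof is correct and takes a genuinely different route from the paper's. Both begin the same way, establishing symmetry of $p(\T)$ via the commutation $[p_0(\T),\,I\otimes P^{(n-1)}_{\alpha,q}\otimes I]=0$ and the two-sided factorisation of \Cref{prop1}. From there the paper does \emph{not} compute domains; it instead exhibits a dense set of analytic vectors: using the spectral projections $E_{[-C,C]}$ of the closure of $\T$, it bounds $\|p(\T)^k\vec x\|_{\alpha,q}\le 2^n n!\,(2nC)^k\|\vec x\|_{0,0}$ from the estimate \eqref{estymationR} on $R^{(n)}_{\alpha,q}$ and the inequality $P^{(n)}_{\alpha,q}\le (1+|\alpha||q|^{n-1})[n]_q\,(I\otimes P^{(n-1)}_{\alpha,q}\otimes I)$, and then invokes Nelson's analytic-vector theorem. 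Your argument stays on a fixed level $\H$, uses that $\|\cdot\|_{\alpha,q}$ and $\|\cdot\|_{0,0}$ are equivalent there, and computes the $\langle\cdot,\cdot\rangle_{\alpha,q}$-closure and adjoint of $p(\T)^{(n)}$ directly, matching them via the one commutation that survives, namely that of $Q=I\otimes P^{(n-1)}_{\alpha,q}\otimes I$ with $\overline{p_0(\T)}$.

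What your approach buys is that it avoids Nelson's theorem and any spectral analysis of $\T$; once $p_0(\T)\cong(\T)\otimes\id$ is essentially self-adjoint for $\langle\cdot,\cdot\rangle_{0,0}$ and $R^{(n)}_{\alpha,q}$ is a bounded bijection, essential self-adjointness is pure domain bookkeeping. The cost is the point you correctly flag at the end: to match the domains you need $(R^{(n)}_{\alpha,q})^{-1}$ --- equivalently $(P^{(n)}_{\alpha,q})^{-1}$ --- to preserve $\mc{D}_n$, i.e.\ you need $P^{(n)}_{\alpha,q}$ to be invertible \emph{in the group algebra} $\C[B(n)]$, not merely as an operator on $\H$. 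This is true (the strict-positivity results for $P^{(n)}_{\alpha,q}$ are established at the group-algebra level, and $\C[B(n)]$ is finite-dimensional semisimple), but it is an extra ingredient the paper's analytic-vector route does not require.
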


\begin{proof}We first observe that $p_0((\T)^\ast) ( I \otimes P^{(n-1)}_{\alpha,q}\otimes I)= (I \otimes P^{(n-1)}_{\alpha,q}\otimes I)p_0((\T)^\ast)$, indeed for $x_{\bar n} \otimes \ldots \otimes x_n \in \mc{D}_n$, we have
\begin{align*}&p_0((\T)^\ast) (I \otimes P^{(n-1)}_{\alpha,q}\otimes I)(x_{\bar n} \otimes \ldots \otimes x_n)\\&=p_0(\bar T^\ast \otimes T^\ast) (I \otimes P^{(n-1)}_{\alpha,q}\otimes I)(x_{\bar n} \otimes \ldots \otimes x_n)\\&= \bar T^*(x_{\bar n}) \otimes  P^{(n-1)}_{\alpha,q}(x_{\overline{n-1}} \otimes \ldots \otimes x_{n-1})\otimes T^*(x_n)\\&=   (I\otimes P^{(n-1)}_{\alpha,q}\otimes I)(\bar T^\ast (x_{\bar n}) \otimes \ldots \otimes T^*(x_n))\\&=(I \otimes P^{(n-1)}_{\alpha,q}\otimes I)p_0(\bar T^\ast\otimes T^\ast) (x_{\bar n} \otimes \ldots \otimes x_n)\\&=(I \otimes P^{(n-1)}_{\alpha,q}\otimes I)p_0((\bar T\otimes T)^\ast) (x_{\bar n} \otimes \ldots \otimes x_n)
.
\end{align*}
Now we show that $p(\T)$ is symmetric on $\FD$. Let us fix $n$, and $f,g \in \mc{D}_n$, then
\begin{align*}
\ip{p(\T) f}{g}_{\alpha,q} &=\langle p(\T) f, P^{(n)}_{\alpha,q} g \rangle_{0,0} \\&=\langle p_0(\T) R^{(n)}_{\alpha,q} f,  (I\otimes 
 P^{(n-1)}_{\alpha,q}\otimes I) R^{(n)}_{\alpha,q}g \rangle_{0,0} \\
&= \langle R^{(n)}_{\alpha,q} f, p_0((\T)^\ast) (I \otimes P^{(n-1)}_{\alpha,q}\otimes I) R^{(n)}_{\alpha,q}g \rangle_{0,0}
 \\&= \langle R^{(n)}_{\alpha,q} f,  (I \otimes P^{(n-1)}_{\alpha,q}\otimes I)p_0((\T)^*)) R^{(n)}_{\alpha,q}g \rangle_{0,0}
\intertext{by Proposition \ref{prop1}, we have}
&=\langle  f,  {R^{(n)}_{\alpha,q}}^*(\I \otimes P^{(n-1)}_{\alpha,q}\otimes I)p_0((\T)^*) R^{(n)}_{\alpha,q}g \rangle_{0,0}
= \ip{f}{p((\T)^\ast)g}_{ \alpha,q}.
\end{align*}

\noindent
Now we show that $\T$ is essentially self-adjoint. Let $E$ be the spectral measure of the closure of $\T$ and $C \in \mf{R}_+$. Let $\set{x_i}_{i=\bar n}^n \subset (E_{[-C. C]} H) \cap \mc{D}$, $\vec{x} = x_
{\bar n}\otimes \ldots \otimes x_n$, then $\norm{\T (x_{\bar i}\otimes x_i)} \leq C \norm{x_{\bar i}\otimes x_i}$ and
\begin{align*}
&\ip{p(\T) \vec{x}}{ p(\T) \vec{x}}_{0,0}=\ip{p_0(\T){R^{(n)}_{\alpha,q}} \vec{x}}{ p_0(\T){R^{(n)}_{\alpha,q}} \vec{x}}_{0,0}\leq C^2 \norm{{R^{(n)}_{\alpha,q}}\vec{x}}_{0,0}^2 
\intertext{by  \cref{estymationR}, we have}
&\leq \big(C(1+|\alpha| |q|^{n-1})[n]_q \norm{\vec{x}}_{0,0}\big)^2 
\leq \big(2nC \norm{\vec{x}}_{0,0}\big)^2.
\end{align*}
Thus we get the following estimation for the norm of $p(\T)^k $
\begin{align*}
&\norm{p(\T)^k \vec{x}}_{ \alpha,q}^2 = \ip{p(\T)^k \vec{x}}{P_{ \alpha,q}^{(n)} p(\T)^k \vec{x}}_{0,0} 
\leq \norm{P_{ \alpha,q}^{(n)}}_{0,0}^2\ip{p(\T)^k \vec{x}}{ p(\T)^k \vec{x}}_{0,0}\\&
\leq \norm{P_{ \alpha,q}^{(n)}}_{0,0}^2 (2^kn^k C^k \norm{\vec{x}}_{0,0})^2.
\end{align*}
Now, we use the estimations from the proof of \cite[Theorem 2.9, equation (2.32)]{BEH15} i.e. 
\begin{equation}
\begin{split}
P_{\alpha,q}^{(n)} \leq (1+|\alpha\|q|^{n-1})[n]_q (\I \otimes P_{\alpha,q}^{(n-1)}\otimes I) ,
\end{split}
\end{equation}
with respect to the $(0,0)$-inner product, so $\norm{P_{ \alpha,q}^{(n)}}_{0,0} \leq \prod_{i=1}^n (1+|\alpha||q|^{i-1})[i]_{q} \leq 2^n n!$. It can be shown that $\norm{p(\T)^k \vec{x}}_{\alpha,q} \leq {2^n n!}2^k n^k C^k \norm{\vec{x}}_{0,0}$, so the  series $\sum_{k=0}^\infty\frac{p(\T)^k \vec{x}}{k!}s^k$ has a positive radius of absolute convergence, because 
\[
\limsup_{k \rightarrow \infty}\sqrt[k]{ \frac{\norm{p(\T)^k \vec{x}}_{ \alpha,q}}{k!}}\leq \limsup_{k \rightarrow \infty}\sqrt[k]{ \frac{{2^n n!} 2^kn^k C^k \norm{\vec{x}}_{0,0}}{k!}} = 0.
\]
Therefore $\vec{x}$ is an analytic vector for $p(\T)$. The linear span of such vectors is invariant under $p(\T)$ and is a dense subset of $\mc{D}^{\otimes n}$. Therefore by Nelson's analytic vector theorem \cite{Nel59} (see also \cite{ReeSim1}), $p(\T)$ is essentially self-adjoint on $\mc{D}^{\otimes n}$.
\end{proof}

\begin{Prop} If $\T$ is a bounded operator on $\HH$, then $p(\T)$ is a bounded operator on the double Fock space of type B.
\label{Ansh+}
\end{Prop}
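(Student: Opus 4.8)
The plan is to bound $p(\T)$ on each finite particle space $\H$ by a constant independent of $n$. Since $\T$ is bounded, $p(\T)=p_0(\T)R^{(n)}_{\s,\q}$ is everywhere defined on $\F$ and maps each $\H$ into itself, and $\mathcal F_{\s,\q}(\HH)$ is the orthogonal direct sum of the completions of the spaces $(\H,\langle\cdot,\cdot\rangle_{\s,\q})$; hence it is enough to find $C$, independent of $n$, with $\|p(\T)\xi\|_{\s,\q}\le C\,\|\xi\|_{\s,\q}$ for all $\xi\in\H$, and then $p(\T)$ extends to a bounded operator on $\mathcal F_{\s,\q}(\HH)$ with $\|p(\T)\|_{\s,\q}\le C$.

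The key step --- and the only one carrying real content --- is the operator inequality on $\H$ (adjoints with respect to $\langle\cdot,\cdot\rangle_{0,0}$)
\[
{R^{(n)}_{\s,\q}}^{*}\,Q\,R^{(n)}_{\s,\q}\ \le\ D_n\,P^{(n)}_{\s,\q},\qquad Q:=I\otimes P^{(n-1)}_{\s,\q}\otimes I,\quad D_n:=(1+|\s||\q|^{n-1})[n]_{\q}.
\]
I would prove it as follows. For $\s,\q\in(-1,1)$ the operator $P^{(n-1)}_{\s,\q}$ is bounded and strictly positive (\cite{BE23}), so $Q$ is bounded and boundedly invertible; by \cref{prop1} one has $P^{(n)}_{\s,\q}=Q\,R^{(n)}_{\s,\q}$, and by \cite[Theorem~2.9]{BEH15} (the estimate quoted in the proof of \cref{Prop:samosprzezone}) one has $P^{(n)}_{\s,\q}\le D_n\,Q$. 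Then $M:=Q^{-1/2}P^{(n)}_{\s,\q}Q^{-1/2}$ is positive with $M\le D_n I$, while simultaneously $M=Q^{1/2}R^{(n)}_{\s,\q}Q^{-1/2}$; hence $Q^{1/2}R^{(n)}_{\s,\q}=MQ^{1/2}$ and
\[
{R^{(n)}_{\s,\q}}^{*}Q\,R^{(n)}_{\s,\q}=\bigl(MQ^{1/2}\bigr)^{*}\bigl(MQ^{1/2}\bigr)=Q^{1/2}M^{2}Q^{1/2}\le D_n\,Q^{1/2}MQ^{1/2}=D_n\,P^{(n)}_{\s,\q},
\]
using $M^{2}\le D_n M$ (valid since $0\le M\le D_n I$).

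Granting this, the rest is bookkeeping. Fix $\xi\in\H$ and put $\zeta:=R^{(n)}_{\s,\q}\xi$, so that $p(\T)\xi=p_0(\T)\zeta$ and $\|p(\T)\xi\|_{\s,\q}^{2}=\langle p_0(\T)\zeta,\,P^{(n)}_{\s,\q}\,p_0(\T)\zeta\rangle_{0,0}$. Using $P^{(n)}_{\s,\q}\le D_n Q$, the commutation $p_0(\T)Q=Q\,p_0(\T)$ (immediate, since $p_0(\T)=\bar T\otimes I^{\otimes(2n-2)}\otimes T$ acts on the two outer tensor legs while $Q$ acts on the inner ones; cf.\ the beginning of the proof of \cref{Prop:samosprzezone}), and $p_0(\T)^{*}p_0(\T)=(\bar T^{*}\bar T)\otimes I^{\otimes(2n-2)}\otimes(T^{*}T)\le\|\bar T\|^{2}\|T\|^{2} I$ --- the only place where boundedness of $\T$ enters --- one reduces $\|p(\T)\xi\|_{\s,\q}^{2}$ to $D_n\|\bar T\|^{2}\|T\|^{2}\langle\zeta,Q\zeta\rangle_{0,0}=D_n\|\bar T\|^{2}\|T\|^{2}\langle\xi,{R^{(n)}_{\s,\q}}^{*}Q\,R^{(n)}_{\s,\q}\xi\rangle_{0,0}$, and the key inequality then gives $\|p(\T)\xi\|_{\s,\q}^{2}\le D_n^{2}\|\bar T\|^{2}\|T\|^{2}\|\xi\|_{\s,\q}^{2}$. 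Since $D_n\le(1+|\s|)/(1-|\q|)$ for all $n$, this yields the claim with $\|p(\T)\|_{\s,\q}\le\frac{1+|\s|}{1-|\q|}\,\|\bar T\|\,\|T\|$.

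The hard part is the operator inequality of the second paragraph: it is precisely the device that converts the plain $\langle\cdot,\cdot\rangle_{0,0}$-bounds for $R^{(n)}_{\s,\q}$ and $p_0(\T)$ into bounds for the deformed inner product $\langle\cdot,\cdot\rangle_{\s,\q}$, and it is there that \cref{prop1} and the $\langle\cdot,\cdot\rangle_{0,0}$-domination $P^{(n)}_{\s,\q}\le D_n(I\otimes P^{(n-1)}_{\s,\q}\otimes I)$ from \cite{BEH15} are genuinely used; everything else is formal manipulation with tensor legs. An alternative, leading to the same inequality, would be to verify $p_{\s,\q}(\bar T\otimes T)=\sum_{i,j}\B^{\ast}(\bar T e_i\otimes T e_j)\,\B(e_i\otimes e_j)$ for an orthonormal basis $\{e_i\}$ of $H$ and then to factor $p(\T)$ as a product of two bounded ``column'' operators built from $\B$ and $\B^{\ast}$, using the known boundedness of the creation operators.
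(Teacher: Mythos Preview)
Your proof is correct and takes a different route from the paper's. The paper argues via the intertwining relation $P^{(n)}_{\s,\q}\,p((\T)^{*})=p(\T)^{*}P^{(n)}_{\s,\q}$ (adjoint in $\langle\cdot,\cdot\rangle_{0,0}$): from this, $A:=P^{(n)}_{\s,\q}\,p((\T)^{*})p(\T)$ is positive, one has $A^{2}\le \|p((\T)^{*})p(\T)\|_{0,0}^{2}\,(P^{(n)}_{\s,\q})^{2}$, and operator monotonicity of the square root gives $A\le \|p((\T)^{*})\|_{0,0}\|p(\T)\|_{0,0}\,P^{(n)}_{\s,\q}$; the $(0,0)$-norms are then estimated directly via \eqref{estymationR}. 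You instead isolate the structural inequality ${R^{(n)}_{\s,\q}}^{*}Q\,R^{(n)}_{\s,\q}\le D_{n}P^{(n)}_{\s,\q}$ (with $Q=I\otimes P^{(n-1)}_{\s,\q}\otimes I$) and prove it by the similarity $M=Q^{1/2}R^{(n)}_{\s,\q}Q^{-1/2}$ together with $M^{2}\le D_{n}M$; the operator $\T$ then enters only through the elementary tensor-leg estimate $p_{0}(\T)^{*}p_{0}(\T)\le\|\bar T\|^{2}\|T\|^{2}I$ and its commutation with $Q$. Both approaches rest on the same two ingredients --- the factorisation of \cref{prop1} and the domination $P^{(n)}_{\s,\q}\le D_{n}Q$ from \cite{BEH15} --- and yield essentially the same bound (the paper's $\max\{1,1/(1-q)\}$ is marginally sharper than your $1/(1-|q|)$ for $q<0$, but you could recover it by bounding $D_n$ more carefully). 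Your route has the virtue of making the key inequality explicit and independent of $\T$, and of avoiding the square-root step; the paper's is conceptually closer to the standard device of transferring $(0,0)$-bounds to the deformed inner product through an intertwining. Your closing alternative via $p(\T)=\sum_{i,j}\B^{\ast}(\bar T e_i\otimes T e_j)\,\B(e_i\otimes e_j)$ is also valid (the identity holds since $\sum_{i,j}\r^{\ast}(\bar T e_i\otimes T e_j)\,\r(e_i\otimes e_j)=p_{0}(\T)$), though carrying it to a bound would still require something equivalent to either argument above.
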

\begin{proof} We begin by showing that $p(\T)$ is bounded on the $\mathcal{F}_{0,0}(H)$. Next we show that $p(\T)$ is bounded. Using  \cref{Prop:samosprzezone} it can be shown that $P_{\alpha,q}p((\T)^{*})=p(\T)^{*}P_{\alpha,q}$, where $p(\T)^{*}$ is taken with respect to the $(0,0)$-inner product. Indeed, for $f,g \in H^{\otimes n}$, we have  \begin{align*}
&\langle f, p((\T)^*) g \rangle_{\alpha,q}= \ip{ f}{P^{(n)}_{\alpha,q} p((\T)^*)g}_{0,0}
\intertext{ by  \cref{Prop:samosprzezone}, we get} 
&=\ip{p(\T) f}{g}_{\alpha,q} =\langle p(\T) f, P^{(n)}_{\alpha,q} g \rangle_{0,0}= \langle f, p((\T)^*)P^{(n)}_{\alpha,q} g \rangle_{0,0}.
\end{align*}
This gives us $P_{\alpha,q}p((\T)^{*})p(\T)=p((\T)^{*})P_{\alpha,q}p(\T)\geqslant 0 $ and 
$$
P_{\alpha,q}p((\T)^{*})p(\T) [p((\T)^{*})p(\T)]^*P_{\alpha,q}$$
$$\leqslant \|p((\T)^{*})p(\T) [p((\T)^{*})p(\T)]^*\|_{0,0}P_{\alpha,q}^2.
$$
By taking the square root of the operators from above inequality, we get
\begin{align*} \nonumber
P_{\alpha,q}p((\T)^{*})p(\T)\leqslant \sqrt{\|p((\T)^{*})p(\T) [p((\T)^{*})p(\T)]^*\|_{0,0}}P_{\alpha,q} \\
\leqslant \|p((\T)^{*})\|_{0,0}\|p(\T)\|_{0,0}P_{\alpha,q}. \label{eq:nierownosc1}
\end{align*}
If we take $f\in \H$, then we get
\begin{align*} \nonumber
&\langle p(\T)f | p(\T)f \rangle_{\alpha,q} = \langle f | p((\T)^*)p(\T)f \rangle_{\alpha,q} = \langle f | P_{\alpha,q}p((\T)^*)p(\T)f \rangle_{0,0}. 
&
\end{align*}
It is clear by the definition of $p_{0}$ that $\|p_{0}\|_{0,0}\leqslant \|\T\|$, and thus 
\begin{align*} &\|p(\T)f\|_{0,0}=\|p_0(\T)R^{(n)}_{\alpha,q} f\|_{0,0}\leqslant \|p_0\|_{0,0}\| R^{(n)}_{\alpha,q}f\|_{0,0}.
\intertext{Now we use the estimation from \cref{estymationR} and we get}
& \leqslant \|\T\|(1+|\alpha| |q|^{n-1})[n]_q\|f\|_{0,0} \leqslant \max\{1+|\alpha| ,(1+|\alpha| )/(1-q)\}\|\T\|\|f\|_{0,0}. 
\end{align*}
Finally, since $\|(\T)^*\|=\|\T\|$, we conclude that
$$
\| p(\T) \|_{\alpha,q} \leqslant \sqrt{\|p((\T)^*)\|_{0,0}\|p(\T)\|_{0,0}} \leqslant (1+|\alpha| )\max\{1,1/(1-q)\}\|\T\|.
$$
\end{proof}


	\section{The Double Poisson operator of type B }
Now we provide an explicit formula for the combinatorial mixed moments, involving the number of  crossings and negative arcs of a partition. First, we need to define the operators, the set of partitions and statistics of type B.


 Now  we  define differential second quantization operator on $\FQ$. 
In order to do this, we introduce some special operators.  We define $T_{\bar i}:=\bar T_i$ and
$\lambda_1 \otimes \lambda_{\bar 1}$ to be $\lambda_1 \lambda_{\bar 1}\I \otimes \I.$
 
	\subsection{The orthogonal polynomials}

 \begin{definition}
		The operator 
		\begin{equation}
			\G^{\lambda_{\bar i},\lambda_i}(x_{\bar i}\otimes x_i)= \B( x_{\bar i
   }\otimes x_i) +\B^\ast(x_{\bar i}\otimes x_i)+p( T_{\bar i} \otimes T_i)+\lambda_{\bar i}\otimes\lambda_i ,\qquad x_{\bar i} \otimes x_i\in \HH_\R, \quad \lambda_{\bar i},\lambda_i\in\R,
		\end{equation}
		on $\F$ is called the \emph{double Poisson operator of type B}. 
		Denote by $\state$ the vacuum vector state $\state(\cdot)=\langle\Omega \otimes  \Omega, \cdot\text{ } \Omega\otimes  \Omega\rangle$. To simplify the notation we define $\G(x_{\bar i} \otimes x_i):=\G^{0,0}(x_{\bar i} \otimes x_i)$.
		
	\end{definition}

	Using a similar argument as in \cite[Theorem 3.3]{BEH15}, we can prove the following. For the reader's convenience, we provide this proof. 
	
	\begin{theorem} Suppose $\s,\q\in(-1,1)$ and $x\otimes y \in \HH_\R, \|x\|=\|y\|=1$ and $T=\bar T=\I$. Let $\nu_{\alpha,q,x,y}$ be the probability distribution of $\GZ(x\otimes y)$ with respect to the vacuum state. Then $\nu_{\alpha,q,x,y}$ is equal to $\mu_{\s\langle x,y\rangle^2,\q}$. 
	\end{theorem}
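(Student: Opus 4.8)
The plan is to read the vacuum moments of $\GZ(x\otimes y)$ off the three-term recurrence \eqref{recursion} with the parameter $\s$ replaced by $\s\langle x,y\rangle^2$, in the spirit of \cite[Theorem 3.3]{BEH15}. Put $\Psi_0:=\Omega\otimes\Omega$ and $\Psi_n:=\bigl(\B^\ast(x\otimes y)\bigr)^n\,\Omega\otimes\Omega=x^{\otimes n}\otimes y^{\otimes n}\in\H$; these vectors lie in pairwise orthogonal summands of $\FQ$ and $\|\Psi_0\|_{\s,\q}=1$. Unwinding the definition of $\B^\ast(x\otimes y)$ gives $\B^\ast(x\otimes y)\Psi_n=\Psi_{n+1}$, so the creation part already supplies the raising term of \eqref{recursion}.

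Next I would compute the annihilation part through the splitting $\B(x\otimes y)=p_\q(x\otimes y)+\s\,\ell_\q(x\otimes y)$ and formulas \eqref{rq}--\eqref{lq}. Evaluated on $\Psi_n$, each summand of $p_\q(x\otimes y)$ carries the scalar $\langle x,x\rangle\langle y,y\rangle=1$ and each summand of $\ell_\q(x\otimes y)$ carries $\langle x,y\rangle\langle y,x\rangle=\langle x,y\rangle^2$ (we use $\|x\|=\|y\|=1$ and that $x,y$ are real), while the surviving tensor in every summand reassembles to $\Psi_{n-1}$; summing the $q$-powers yields $\B(x\otimes y)\Psi_n=[n]_q\bigl(1+\s\langle x,y\rangle^2\q^{n-1}\bigr)\Psi_{n-1}$, so the annihilator realizes the off-diagonal coefficient $\gamma_{n-1}=[n]_q(1+\s\langle x,y\rangle^2\q^{n-1})$ of \eqref{recursion}.

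The core of the argument is the gauge part. Because $T=\bar T=\I$, the operator $p_0(\I\otimes\I)$ is the identity, so $p(\I\otimes\I)=R^{(n)}_{\s,\q}$ on $\H$ by definition; in the decomposition of \cref{prop1} the summands $\I+\sum_{k=1}^{n-1}\q^{k}\pi_{n-1}\cdots\pi_{n-k}$ lie in $S(n)$ and merely permute the $n$ copies of $x$ (resp.\ of $y$) among themselves, hence fix $\Psi_n$ and contribute $[n]_q\Psi_n$, whereas the reflection summand $\s\q^{n-1}\pi_{n-1}\cdots\pi_1\pi_0\bigl(1+\sum_{k=1}^{n-1}\q^{k}\pi_1\cdots\pi_k\bigr)$ sends $\Psi_n$ to $\s\q^{n-1}[n]_q$ times the vector obtained by pushing one copy of $x$ into the right leg and one copy of $y$ into the left leg. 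I expect this to be the main obstacle: that reflection moves $\Psi_n$ out of $\operatorname{span}\{\Psi_m\}$, so one has to check — by pairing against the symmetrization $P^{(n)}_{\s,\q}$, which is precisely where the factor $\langle x,y\rangle^2$ is produced — that its only effect on the vacuum moments is to add $\s\langle x,y\rangle^2\q^{n-1}[n]_q$ to the diagonal, i.e.\ that $p(\I\otimes\I)$ realizes $\beta_n=[n]_q(1+\s\langle x,y\rangle^2\q^{n-1})$.

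Granting this, $\GZ(x\otimes y)$ maps $\operatorname{span}\{\Psi_m\}$ into itself and acts there as the Jacobi operator with $\beta_n=\gamma_{n-1}=[n]_q(1+\s\langle x,y\rangle^2\q^{n-1})$. Since $\B(x\otimes y)$ is the adjoint of $\B^\ast(x\otimes y)$ for $\langle\cdot,\cdot\rangle_{\s,\q}$, the relations $\B^\ast(x\otimes y)\Psi_n=\Psi_{n+1}$ and $\B(x\otimes y)\Psi_n=\gamma_{n-1}\Psi_{n-1}$ force $\|\Psi_{n+1}\|_{\s,\q}^2=\gamma_n\|\Psi_n\|_{\s,\q}^2$, hence $\|\Psi_n\|_{\s,\q}^2=\gamma_0\cdots\gamma_{n-1}=\int_\R\lvert Q_n^{(\s\langle x,y\rangle^2,\q)}\rvert^2\,d\mu_{\s\langle x,y\rangle^2,\q}$ by \eqref{eq54}. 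Comparing with the monic recurrence \eqref{wielortogonalnerekursia} and \eqref{recursion} then gives $\state(\GZ(x\otimes y)^k)=\int_\R t^k\,d\mu_{\s\langle x,y\rangle^2,\q}$ for every $k$. Finally, $\GZ(x\otimes y)$ is bounded and self-adjoint (by the norm bound for $\B^\ast(x\otimes y)$ and \cref{Ansh+}), so $\nu_{\alpha,q,x,y}$ is compactly supported; the Jacobi parameters above are bounded because $|\q|<1$, so $\mu_{\s\langle x,y\rangle^2,\q}$ is compactly supported too, and a compactly supported measure is uniquely determined by its moments. Therefore $\nu_{\alpha,q,x,y}=\mu_{\s\langle x,y\rangle^2,\q}$.
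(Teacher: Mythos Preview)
Your strategy is the paper's strategy: show that $\GZ(x\otimes y)$ acts tridiagonally on the vectors $\Psi_n=x^{\otimes n}\otimes y^{\otimes n}$ with the Jacobi parameters of $\mu_{\s\langle x,y\rangle^2,\q}$, then invoke moment determinacy. Your handling of the creation and annihilation parts matches the paper exactly.

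Where you flag ``the main obstacle'' you have in fact located a genuine gap that the paper's proof glosses over. The paper simply asserts
\[
p(\I\otimes\I)\,\Psi_n=[n]_q\bigl(1+\s\langle x,y\rangle^2\q^{n-1}\bigr)\Psi_n,
\]
but your own computation via \eqref{rqT}--\eqref{lqT} shows that the reflection piece contributes $\s\q^{n-1}[n]_q\bigl(y\otimes x^{\otimes(n-1)}\otimes y^{\otimes(n-1)}\otimes x\bigr)$, which for linearly independent $x,y$ lies \emph{outside} $\operatorname{span}\{\Psi_m\}$; no factor $\langle x,y\rangle^2$ appears at this stage. Consequently your sentence ``Granting this, $\GZ(x\otimes y)$ maps $\operatorname{span}\{\Psi_m\}$ into itself'' assumes precisely what is in doubt and is actually false for generic $x,y$. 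The suggestion to ``pair against $P^{(n)}_{\s,\q}$'' is not enough: even if the stray vector were $(\s,\q)$-orthogonal to every $\Psi_m$, a further application of $\GZ$ to it would feed back into the moment computation, so one cannot simply project it away. In short, neither your argument nor the paper's, as written, establishes the claimed tridiagonal action when $x\neq y$; both are complete only in the case $x=y$, where the reflection vector coincides with $\Psi_n$ and $\langle x,y\rangle^2=1$. Closing the gap requires either enlarging the cyclic subspace and tracking the extra vectors, or bypassing the recursion and computing the moments directly (e.g.\ via Theorem~\ref{lem101}).
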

	\begin{proof} 
		We observe that for $n \geq 1$, we have 
		\begin{align*}
			\GZ(x\otimes y )x^{\otimes n}\otimes y^{\otimes n} 
			&= \B^\ast(x\otimes y )x^{\otimes n}\otimes y^{\otimes n} +\B(x\otimes y) x^{\otimes n}\otimes y^{\otimes n}+p(\bar T \otimes T)(x^{\otimes n}\otimes y^{\otimes n})  \\
			&= x^{\otimes  (n+1)}\otimes y^{\otimes  (n+1)}+[n]_\q (1 + \s\langle x,y\rangle^2 \q^{n-1})x^{\otimes (n-1)}\otimes y^{\otimes (n-1)}
   \\& +[n]_\q (1 + \s\langle x,y\rangle^2 \q^{n-1})x^{\otimes n}\otimes y^{\otimes n},
		\end{align*}
	Note that for $n=1$ $$Q_{1}^{(\s\langle x,y\rangle^2,\q)}(\GZ(x\otimes y ))\Omega\otimes \Omega =\GZ(x\otimes y )\Omega \otimes \Omega =x\otimes y$$ and by induction 
		\begin{align*}
			Q_{n+1}^{(\s\langle x,y\rangle^2,\q)}(\GZ(x\otimes y ))\Omega \otimes \Omega & =\GZ(x\otimes y ) Q_n^{(\s\langle x,y\rangle^2,\q)}( \GZ(x\otimes y ))\Omega\otimes \Omega \\& - [n]_q (1 + \s\langle x, y\rangle^2 q^{n-1})Q_{n-1}^{(\s\langle x,y\rangle^2,\q)}( \GZ(x\otimes y ))\Omega\otimes \Omega \\& - [n]_q (1 + \s\langle x, y\rangle^2 q^{n-1})Q_{n}^{(\s\langle x,y\rangle^2,\q)}( \GZ(x\otimes y ))\Omega\otimes \Omega \\&=\GZ(x\otimes y ) x^{\otimes n}\otimes y^{\otimes n} - [n]_q(1+ \s\langle x, y\rangle^2 q^{n-1})x^{\otimes (n-1)}\otimes y^{\otimes (n-1)}
			\\ &- [n]_q(1+ \s\langle x, y\rangle^2 q^{n-1})x^{\otimes n}\otimes y^{\otimes n} =x^{\otimes n+1}\otimes y^{\otimes n+1}.
		\end{align*}

		Therefore, the map $\Phi\colon (\text{span}\{x^{\otimes n}\otimes y^{\otimes n}\mid n \geq 0\}, \|\cdot\|_{\s,\q}) \to L^2(\R,\mu_{\s\langle x,y\rangle^2 ,\q})$ defined by $\Phi(x^{\otimes n}\otimes y^{\otimes n})= Q_n^{(\s\langle x,y\rangle^2,\q)}(t)$ is an isometry. 
		Since $\Phi$ is an isometry, we get 
		$\langle \Omega \otimes \Omega, \GZ(x\otimes y)^n\Omega\rangle_{\s,\q} = m_n(\mu_{\s\langle x,y\rangle^2,\q})$, where $m_n(\mu)$ is the $n$-th moment of measure $\mu$. 
		Since $\mu_{\s\langle x,y\rangle^2,\q}$ is a compactly supported measure, the Hamburger moment problem has a unique solution and hence $\mu_{\s\langle x,y\rangle^2,\q}=\m$.  
	\end{proof}

	\subsection{Partitions of type B}
\label{subsec:PartitionsB}
We are interested  in 
	$\ast$-algebra generated by $\B(x\otimes y) ,\B^\ast(x\otimes y)$ and $p(\T)$. Particularly interesting are their  mixed moments.
	In order to work effectively on this object we need to define the corresponding partition and statistics.
 
	Let $S$ be an ordered set. Then $\pi = \{ V_1,\dots, V_p\}$ is a partition of $S$, if the $V_i \neq \emptyset$ are
	ordered and disjoint sets $V_i=(v_1,\dots,v_k)$, where $v_1<\dots<v_k$, whose union is $S$.
	For  $V \in \pi$ ,  we say that $V$ is a \emph{block of $\pi$}.
	Any partition $\pi$ defines an equivalence relation on $S$,
	denoted by $\sim_\pi$, such that the equivalence classes are the 
	blocks of $\pi$. 
	Therefore, $i\sim_\pi j$ if $i$ and $j$ belong to the same block of $\pi$. 
	A block of $\pi$ is called a \emph{singleton} if it consists of one element. 
	Similarly, a block of $\pi$ is called a \emph{pair} if it consists of two elements.
	Let $\Sing(\pi)$ and $\Pair(\pi)$  denote the set of all singletons and blocks of $\pi$, respectively.   
	The set of  partitions of $S$ is denoted by $\Part(S)$, in the case where $S =
	[n] := \{1, \dots , n\}$ we write $\Part(n)$ := $\Part([n])$. 
	We denote by $\P_{1,2}(S)$ the subset of partitions $\pi \in \Part(S)$ whose every block is either a 
	pair or a singleton. The set of partitions without singletons is denoted by $\P_{\geq 2}(n)$ and the set of  partitions whose every block is a pair by $\P_2(n).$ 
 
	Given a partition $\pi$ of the set $[n]$, we write $\Semi(\pi)$ for the set of pairs of integers $(i, j)$ which occur in the same block of $\pi$ such that $j$ is the smallest element of the block greater than $i$. The same notation $\Semi(V)$ is applied to a block $V\in \pi$ - see Def. \ref{def:arcs}.
 
	From now on, we will work on a  set  $[\pm n]:=\{\bar n, \dots , \bar 1, 1,\dots  , n\}.$
	For a block $V=(a_1, \dots, a_k)$ (or a singleton $V=(a)$) we denote its reflection by $\overline V=(\bar  a_k , \dots, \bar a_1)$ ($\overline V=( \bar a)$), where $\bar a =-a$.   
	Similarly, we define 
	$$\bar \pi: =\{\bar V\mid V\in \pi,\text{ } \pi \in \P([\pm n])\}.$$ 
	 When we draw the points of a block then we think that consecutive elements in every block (bigger than one) are connected by arcs above the $x$ axis (see Fig. \ref{fig:arcs}).

  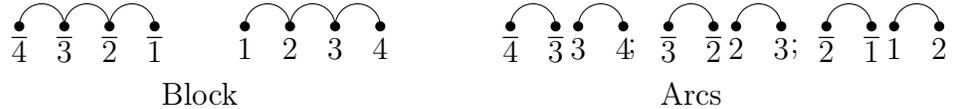
\begin{figure}[h]
		\begin{center}
  \MatchingMeandersabc{4}{-4/-3, -3/-2, -2/-1, 1/2, 2/3, 3/4}{}
  \hspace{3cm}
		  \begin{minipage}{.2\textwidth}
            \begin{tikzpicture}[scale=0.6]
			\draw(-4,0) arc[radius=0.5, start angle=180, end angle=0];
            \draw[circle,fill](-4,0)circle[radius=1mm];
            \draw[circle,fill](-3,0)circle[radius=1mm];
		\node at (-4,-0.5) { $\overline 4$}; 
		\node at (-3,-0.5) { $ \overline 3$};

  \draw(-2.5,0) arc[radius=0.5, start angle=180, end angle=0];
            \draw[circle,fill](-2.5,0)circle[radius=1mm];
            \draw[circle,fill](-1.5,0)circle[radius=1mm];
		\node at (-2.5,-0.5) { $3$}; 
		\node at (-1.5,-0.5) { $4$};
  \node at (-1.3,-0.5) { $;$};
  
  \draw(-0.5,0) arc[radius=0.5, start angle=180, end angle=0];
            \draw[circle,fill](-0.5,0)circle[radius=1mm];
            \draw[circle,fill](0.5,0)circle[radius=1mm];
		\node at (-0.5,-0.5) { $\overline 3$}; 
		\node at (0.5,-0.5) { $\overline 2$};
  
  \draw(1,0) arc[radius=0.5, start angle=180, end angle=0];
            \draw[circle,fill](1,0)circle[radius=1mm];
            \draw[circle,fill](2,0)circle[radius=1mm];
		\node at (1,-0.5) { $2$}; 
		\node at (2,-0.5) { $3$};
   \node at (2.3,-0.5) { $;$};

    \draw(3,0) arc[radius=0.5, start angle=180, end angle=0];
            \draw[circle,fill](3,0)circle[radius=1mm];
            \draw[circle,fill](4,0)circle[radius=1mm];
		\node at (3,-0.5) { $\overline 2$}; 
		\node at (4,-0.5) { $\overline 1$};
  
  \draw(4.5,0) arc[radius=0.5, start angle=180, end angle=0];
            \draw[circle,fill](4.5,0)circle[radius=1mm];
            \draw[circle,fill](5.5,0)circle[radius=1mm];
		\node at (4.5,-0.5) { $1$}; 
		\node at (5.5,-0.5) { $2$};
  \node at (0,-1.5) {Arcs};
		\end{tikzpicture}
 \end{minipage}
  \end{center}  
		\caption{The example of a block and corresponding arcs.}
		\label{fig:arcs}
	\end{figure}

	\begin{definition} \label{def:partycji} 
		We denote by $\P^{B}(n)$ the subset of partitions $\pi \in \P([\pm n])$
		such that they are symmetric $\overline{\pi} =
		\pi$ (which is invariant under the bar operation), but every arc $V \in \Semi(\pi)$ is different from its reflection
		$\overline{V}$, i.e.,~$V \neq \overline{V}$. 
		We call  $\P^{B}(n)$ the set of partitions of type B. 
	\end{definition}

	From Definition \ref{def:partycji} it follows that for every block in $B\in \pi$, $\pi \in \P^{B}(n)$ there exists a unique {reflection block} $\bar B\in\pi$.
	This  leads to one more definition. We call  $\BL= ((\bar a_k, \dots, \bar a_1),( a_1, \dots, a_k))$ a \emph{B-block} if $\bar a_k  <a_k$, $|{a_1}|  <|a_k|$ and $(\bar a_k, \dots, \bar a_1 ), (a_1, \dots, a_k)\in \pi$, $a_k\leq a_n$. The set of B-blocks  is denoted by $\Pair_B(\pi)$. All B-blocks $((\bar a_k, \dots, \bar a_1), ( a_1, \dots, a_k))$ consist of two blocks - the \emph{positive} one $( a_1, \dots, a_k)$ with $ a_k>0$ and the \emph{negative} one $(\bar a_k, \dots, \bar a_1)$ with $ \bar a_k<0$.

\begin{definition}
    Let $\pi\in\PB(n)$, then we denote by $ \BL\in \Pair_B(\pi)$ each block of type B, where $\BL=((\bar i_m,\dots,\bar i_1),(i_1,\dots,i_m))$  and 
\begin{align*}&\Semi(\BL)=\big\{((\bar i_{j+1},\bar i_j),(i_j,i_{j+1}))\}\mid j\in[m]\big\}, \quad (m\geq 2),
\intertext{and} 
&\Semi_B(\pi)=\cup_{\BL\in\Pair_B(\pi)}\Semi(\BL),
\end{align*}
 i.e. it is a set of arcs.
 \label{def:arcs}
\end{definition}
We call that the pair $((\bar i_{j+1},\bar i_j),(i_j,i_{j+1}))$ is the \emph{positive} arc if both $i_j,i_{j+1}>0$ or the negative arc if $i_j<0$ and $i_{j+1}>0$ (see Fig. \ref{fig:coneced}). Sometimes we call B-arcs instead of arcs.
 
	Especially, we call    $\BL= ((\bar a_2, \bar a_1),( a_1,a_2))$ a   \emph{B-pair} if  $\bar a_2  <a_2$, $|{a_1}|  <|a_2|$ and $(\bar a_2,\bar a_1 ), (a_1,a_2)\in \pi$. For more information about B-pairs see \cite{BE23}. %
	
	Let us notice that from Definition \ref{def:partycji} it follows that 
	the element $s$ is a singleton of $\pi \in \P_{1,2}^{B}(n)$ if and only if $\bar s$ is also a singleton of $\pi,$ thus we can define the subset of  B-singletons $((\bar a),(a))$  as  
	$$\Sing_B(\pi):=\{((\bar a), (a)) \mid \bar a,a \in \Sing(\pi), \text{ }\bar a < a\}, \quad  \pi \in \P_{1,2}^{B}(n).$$ 
Then we can define the set of type-B partitions without singletons as
\begin{align*}
\P_{\geq 2}^B(n)&:=\{\pi \in \P^{B}(n)\mid \Sing_B(\pi)=\emptyset\} 
\end{align*}
and $\PA(n)$ as a subset of $\P^{B}(n)$ with only positive arcs.


	
	\begin{remark}
		(1). We note that a   B-block is not a block of $\pi$, but a pair of reflection blocks.
		
		\noindent  (2). We note that $\#\PA(n)={\displaystyle \sum _{i=0}^{k}{\begin{Bmatrix}k\\i\end{Bmatrix}},}$ where the braces $\{ \}$ denote Stirling numbers of the second kind, which is the same as the number of the classical  partitions on $n$ points. 
	
 	
	\end{remark}

	\begin{figure}[h]
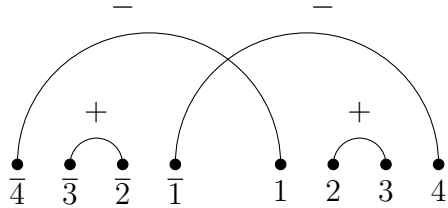

		\begin{center}
			\MatchingMeandersarcs{4}{-4/1, -3/-2,2/3,-1/4 }{} 

		\end{center}  
		\caption{The example of $\pi \in \P^{B}(4)$ with a positive arc ($+$) - $((\bar 3,\bar 2 ), (2,3))$ and a negative  arc ($-$) - $((\bar 4,1), (\bar 1,4))$.}
		\label{fig:coneced}
	\end{figure}

\begin{remark}
Note that in Fig. \ref{fig:coneced}, B-blocks are also B-pairs.
\end{remark}
 
	We introduce some partition statistics.

	\noindent For two arcs  $V$ and $ W$, we introduce the relation  $\text{rc}$ as follows: 
	\begin{align*}
		V\stackrel{\text{rc}}{\sim}W 
		&\iff
		 V=arc(i,j), \textrm{ } W=arc (i',j')  \textrm{ and }   i < i' < j < j'  \textrm{ or } i' < i < j' < j.
	\end{align*}
	For a set partition $\pi\in \P^{B}(n)$ let $\Cr(\pi)$ be the number of crossings arcs i.e.\ 
	\begin{align*}
		\Cr(\pi)=&\#\{\big((\bar V,V), (\bar W, W)\big)\in \Semi_B(\pi)\times \Semi_B(\pi)  \mid &  \text{$V\stackrel{\text{rc}}{\sim}W $}\}\\ & +\#\{\big((\bar V,V), (\bar W, W)\big)\in \Semi_B(\pi)\times \Semi_B(\pi)  \mid &  \text{$\bar V\stackrel{\text{rc}}{\sim}W $}\}
	\end{align*}
 Note that in the definition the signs (positivity and negativity) of arcs are not important. We use the same definition of \emph{restricted crossings} as given in Biane \cite{Bia97}.

 \noindent
	For arc $V=arc(i,j)$ and block $W$, we say that arc $V$ \emph{covers} block $W$ if there are $i,j$ such that $i <k <j$ for any $k\in W$ and then we write $V\stackrel{\text{cs}}{\sim}W $. For $\pi\in\P^{B}(n)$, let $\InS(\pi)$ 
	be the number of pairs of
	a B-singleton and a covering B-arc:
	\begin{align*}
\InS(\pi)&=\#\{\big((\overline{ V}, V ),(\overline{ W}, W)
\big) \in \Semi_B(\pi) \times \Sing_B(\pi)  \mid & V\stackrel{\text{cs}}{\sim}W
\}
\\ & +\#\{\big((\overline{ V}, V ),(\overline{ W}, W)\big) \in \Semi_B(\pi) \times \Sing_B(\pi)   \mid & \overline{V}\stackrel{\text{cs}}{\sim}W 
\}.
\end{align*}
We say that the B-arc  $(\overline{ V} , V)$  cover the B-singleton $(\bar W, W)$   if $V\stackrel{\text{cs}}{\sim}{ W} $ or $\bar V\stackrel{\text{cs}}{\sim}{ W} $.

Let $\NB(\pi)$ be the number of negative B-arcs  of $\Semi_B(\pi)$, where $\pi \in \P^{B}(n)$. 
The set of arcs is \emph{noncrossing} if $\Cr(\pi)=0$. The set of noncrossing arcs is denoted by $\NC^B(n),$ and by $\NC^A(n)$ we denote the subset of   $\NC^B(n)$ where all arcs are  positive.

\begin{remark} \label{jakliczyc}
(1). While reading this, the first impression seems to be that the procedure of counting the crossings is complicated. This is not true since it can be read from the figure as follows.  First,  we draw  a vertical line in the center as in  \cref{fig:examplejakliczyccrosingi}. 
Then we count only the crossings on the left (yellow points on Fig. \ref{fig:examplejakliczyccrosingi}) of this vertical line.
We count   the number of 
negative B-arcs as the number of  B-arcs crossed by a vertical line ($\scriptstyle{\blacklozenge}$ points on \cref{fig:examplejakliczyccrosingi}). 
Similarly we count the pairs of the form $$(\text{covering B-arc, B-singleton})$$  as the number of such a pairs with at least one leg   on the left of this vertical line. 
\begin{figure}[htp]
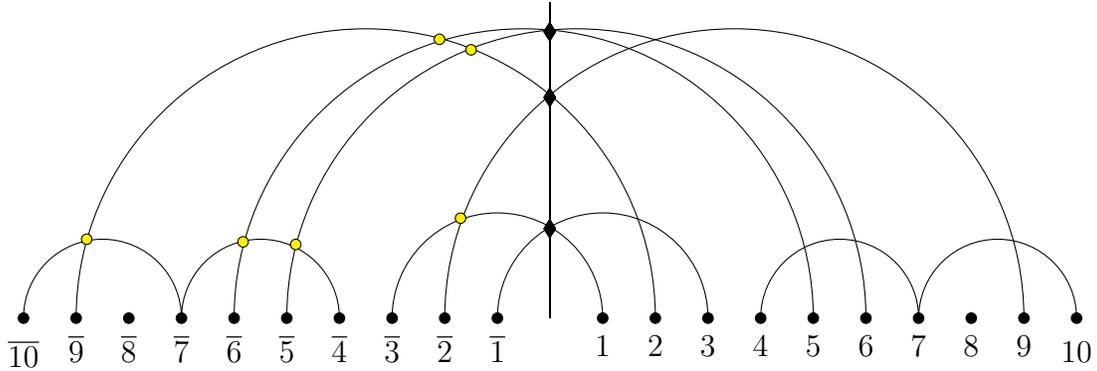

	\begin{center}
		\MatchingMeanderscol{10}{-1/3, -3/1 ,-6/5,-5/6,7/10,-10/-7,-2/9,-9/2, 4/7, -7/-4}
	\end{center}  
	\caption{The example of statistic of partition $\pi \in \P^{B}(10),$ i.e., $\Cr(\pi)=6$,  $\NB(\pi)=3$,
		$\InS(\pi)=2$.
	}
	\label{fig:examplejakliczyccrosingi}
\end{figure}
\end{remark}

\noindent   (2).  Let observe that we can introduce the following projectors
\begin{align*}
\begin{aligned}
\D: \NC^B(2 m)&\to\NC(2 m)\\
\tilde   \pi & \mapsto \pi,
\end{aligned}
\end{align*}
where $\pi$ is defined through the  
B-arcs of $\tilde \pi$; more precisely, the B-arcs  $C=((\bar b,\bar a ), (a,b)) \in \Pair_B(\tilde \pi)$ are mapped according to the action of
the following relation  
\begin{align*}
\begin{aligned}
C&\mapsto\begin{cases}
((\bar b,\bar a ), (a,b)), &  \text{if $C$ is the positive arc of }  \Pair_B(\tilde \pi), \\
((\bar b, a ), (\bar a,b)), &  \text{if $C$ is the negative arc of }  \Pair_B(\tilde \pi).
\end{cases}
\end{aligned}
\end{align*}
-- see   \cref{fig:exmapleouter} for example.   In particular we see that $\NC^B(n)=\bigsqcup_{\pi \in \NC^A(n)} \D^{-1}(\pi)$ and 
\begin{align}
\label{liczbapartycji}
    \#\D^{-1}(\pi)&=\sum_{i=0}^{\#\Out(\pi)}{\#\Out(\pi) \choose i}, \quad\pi\in \NC^A(n).  
\end{align} 

\begin{figure}[h]
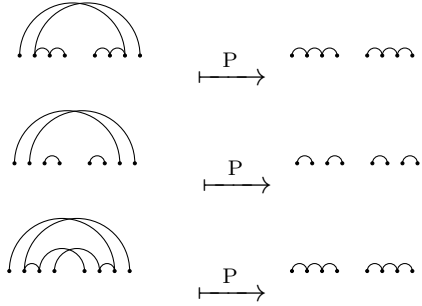

\begin{center}
\MatchingMeandersexample{4}{-4/3, -3/-2,2/3,-3/4,1/2,-2/-1 }{}\hspace{0.7em} 
$\xmapsto[\text{ }\text{ }\text{ }\text{ }\text{ }\text{ }\text{ }]{\D}$
\MatchingMeandersexample{4}{-4/-3, -3/-2,2/3,3/4,1/2,-2/-1 }{} \\   
\MatchingMeandersexample{4}{-4/3, -2/-1,1/2,-3/4 }{} \hspace{0.7em} 
$\xmapsto[\text{ }\text{ }\text{ }\text{ }\text{ }\text{ }\text{ }]{\D}$
\MatchingMeandersexample{4}{-4/-3, -2/-1,1/2,3/4 }{}
\\ 
\MatchingMeandersexample{4}{-4/3, -2/1,-1/2,-3/4,-3/-2,2/3 }{} \hspace{0.7em} 
$\xmapsto[\text{ }\text{ }\text{ }\text{ }\text{ }\text{ }\text{ }]{\D}$
\MatchingMeandersexample{4}{-4/-3, -2/-1,1/2,3/4 ,-3/-2,2/3}{}  

\end{center}
\caption{The visualization of the action $D$ on $\NC_2^B(4)$.
\label{fig:exmapleouter}
}
\end{figure} 
\begin{remark}
    
It turns out that 
the inversions and co-inversions in the symmetric group become
the joint statistics of crossings and negative arcs on the set of partitions that appear in \cref{eq:multimomentintro}. 
The symmetrization operator for $ -1\leq  \alpha, q \leq 1$, has the form 
 \begin{align*}
\sum_{\sigma \in B(n)}\s^{\text{number of negative inversions in }\sigma} \q^{\text{number of positive  inversions  in }\sigma}\sigma, 
 \end{align*}
 where \begin{itemize}
 \item $B(n)$ is the hyperoctahedral group, see Section \ref{sec:preliminaries} above;
\item  the definition of positive and negative inversion is given below.
\end{itemize}
It turns out that combinatorial interpretation of calculating the positive and negative inversions is well compatible with the procedure of counting crossings and negative arcs.
Below by using the language of the positive  and negative inversions, we rewrite a symmetrization operator; see \cite[Page 219, 220]{BozejkoSzwarc2003}. Firstly, we show  definitions  of positive and negative roots of two types: 
\begin{align*}
R_1^+&:=\{1,\dots, n  \} \text{ and }
R_1^-=-R_1^+,
\\ 
R_2^+&:=\{(i,j)\mid 1\leq i < j\leq n  \}\cup \{(i,-j)\mid 1\leq i < j\leq n  \},
\text{ and }
R_2^-
:=-R_2^+. 
\end{align*}

In this style we may define the length functions $l_1$ and $l_2$  as: 
\begin{itemize}
\item {  $l_1(\sigma)=\ninv(\sigma):=\mbox{card}\{i\mid 1\leq i \leq n\text{ and } \sigma(i )<0  \}$ } is the number of negative inversions of  $\sigma\in B(n)$;
\item  {$l_2(\sigma)=\inv(\sigma):=\mbox{card}\{(i,j)\in R_2^+\mid (\sigma(i),\sigma(j))\in R_2^- \}$ } is the number of positive inversions of  $\sigma\in B(n)$;
\end{itemize}
With the above notation we obtain 
$
P_{\s,\q}^{(n)}= \sum_{\sigma \in B(n)}\s^{\ninv(\sigma)} \q^{\inv(\sigma)} \, \sigma. 
$
This new definition of length has the following interpretation.  
Firstly we draw arrows which show the action of permutation -- see  \cref{permutacjeexmpale}. 
Then we follow the rules:
\begin{itemize}
\item if an arrow $a \rightarrow b$ crosses  $-a \rightarrow -b$, then we count them as a negative inversion. This crossing appears  in the center of picture and corresponds to  a crossing which appears when we obtain the negative B-arc (for example see the arcs crossed by a vertical line at the \cref{fig:examplejakliczyccrosingi}).  
\item if an arrow $a \rightarrow b$ crosses  $c \rightarrow d$, ($c\neq -a$) and repeatedly,  $-a \rightarrow -b$ crosses  $-c \rightarrow -d$, then we count one of them as a positive inversion. In other words,  we count positive inversions as  crossings of arrows, which lie to the  left or  to the right of the center of the picture. Similarly we count the number of crossing arcs ($\Cr(\pi)$ for $\pi \in \P^{B}(n)$). 
\end{itemize}

\begin{figure}[htp]

\begin{tikzpicture}[node distance={15mm}, thick, main/.style = {}] 
\node[main] (2)  {$-2$}; 
\node[main] (3) [ right of=2] {$-1$}; 
\node[main] (4) [ right of=3] {$1$}; 
\node[main] (5) [ right of=4] {$2$}; 
\node[main] (6) [ right of=5] { };

\node[main] (b) [below  of=2] {$-2$}; 
\node[main] (c) [ below of=3] {$-1$}; 
\node[main] (d) [ below of=4] {$1$}; 
\node[main] (e) [ below of=5] {$2$}; 
\node[main] (k) [ below of=e] {};
\node at (7.5, 0) (h) {$\sigma=
\bigl(\begin{smallmatrix}
-2 & -1 & 1  & 2  \\
2 & -1 & 1  & -2
\end{smallmatrix}\bigr)=\pi_1\pi_0\pi_1
$};
\node at (7.5, -1.5) {${\inv(\sigma)=2,\hspace{0.5em} \ninv(\sigma)=1}$ };

\draw[->] (2) -- (e); 
\draw[->] (5) -- (b); 
\draw[->] (3) -- (c); 
\draw[->] (4) -- (d); 
\end{tikzpicture}


\hspace{13em}
\begin{tikzpicture}[node distance={15mm}, thick, main/.style = {}]
\node[main] (2)  {$-2$}; 
\node[main] (3) [ right of=2] {$-1$}; 
\node[main] (4) [ right of=3] {$1$}; 
\node[main] (5) [ right of=4] {$2$}; 
\node[main] (6) [ right of=5] { };

\node[main] (b) [below  of=2] {$-2$}; 
\node[main] (c) [ below of=3] {$-1$}; 
\node[main] (d) [ below of=4] {$1$}; 
\node[main] (e) [ below of=5] {$2$}; 
\node[main] (k) [ below of=e] {};
\node at (7.5, -1.5)  (h) {${\inv(\sigma)=1,\hspace{0.5em} \ninv(\sigma)=2}$ };
\node at (7.6,-0)  (h) {$\sigma=
\bigl(\begin{smallmatrix}
-2 & -1 & 1  & 2  \\
1 & 2 & -2  & -1
\end{smallmatrix}\bigr)=\pi_0\pi_1\pi_0
$ };
\draw[->] (2) -- (d); 
\draw[->] (5) -- (c); 
\draw[->] (3) -- (e); 
\draw[->] (4) -- (b); 
\end{tikzpicture}
\hspace{13em}
\begin{tikzpicture}[node distance={15mm}, thick, main/.style = {}]
\node[main] (2)  {$-2$}; 
\node[main] (3) [ right of=2] {$-1$}; 
\node[main] (4) [ right of=3] {$1$}; 
\node[main] (5) [ right of=4] {$2$}; 
\node[main] (6) [ right of=5] { };

\node at (7.5, -1.5)  (h)  {${\inv(\sigma)=1,\hspace{0.5em} \ninv(\sigma)=0}$ };
\node at (7,-0)  (h)  {$\sigma=
\bigl(\begin{smallmatrix}
-2 & -1 & 1  & 2  \\
-1 & -2 & 2  & 1
\end{smallmatrix}\bigr)=\pi_1
$ };

\node[main] (b) [below  of=2] {$-2$}; 
\node[main] (c) [ below of=3] {$-1$}; 
\node[main] (d) [ below of=4] {$1$}; 
\node[main] (e) [ below of=5] {$2$}; 
\draw[->] (2) -- (c); 
\draw[->] (3) -- (b); 
\draw[->] (4) -- (e); 
\draw[->] (5) -- (d);
\end{tikzpicture}

 \vspace{0.8cm}
\begin{tikzpicture}[node distance={15mm}, thick, main/.style = {}] 
\node[main] (1) {$-3$}; 
\node[main] (2) [ right of=1] {$-2$}; 
\node[main] (3) [ right of=2] {$-1$}; 
\node[main] (4) [ right of=3] {$1$}; 
\node[main] (5) [ right of=4] {$2$}; 
\node[main] (6) [ right of=5] {$3$ };

\node[main] (a) [below  of=1] {$-3$}; 
\node[main] (b) [below  of=2] {$-2$}; 
\node[main] (c) [ below of=3] {$-1$}; 
\node[main] (d) [ below of=4] {$1$}; 
\node[main] (e) [ below of=5] {$2$}; 
\node[main] (k) [ below of=6] {$3$};
\node at (12, 0) (h) {$\sigma=
\bigl(\begin{smallmatrix}
-3 & -2 & -1 & 1  & 2  & 3\\
-2 & -3 & 1 & -1  & 3 & 2
\end{smallmatrix}\bigr)=\pi_0\pi_2
$};
\node at (12, -1.5) {${\inv(\sigma)=1,\hspace{0.5em} \ninv(\sigma)=1}$ };

\draw[->] (1) -- (b); 
\draw[->] (2) -- (a);
\draw[->] (3) -- (d);
\draw[->] (4) -- (c); 
\draw[->] (5) -- (k);
\draw[->] (6) -- (e); 

\end{tikzpicture}

\vspace{0.8cm}
\begin{tikzpicture}[node distance={15mm}, thick, main/.style = {}] 
\node[main] (1) {$-3$}; 
\node[main] (2) [ right of=1] {$-2$}; 
\node[main] (3) [ right of=2] {$-1$}; 
\node[main] (4) [ right of=3] {$1$}; 
\node[main] (5) [ right of=4] {$2$}; 
\node[main] (6) [ right of=5] {$3$ };

\node[main] (a) [below  of=1] {$-3$}; 
\node[main] (b) [below  of=2] {$-2$}; 
\node[main] (c) [ below of=3] {$-1$}; 
\node[main] (d) [ below of=4] {$1$}; 
\node[main] (e) [ below of=5] {$2$}; 
\node[main] (k) [ below of=6] {$3$};
\node at (12, 0) (h) {$\sigma=
\bigl(\begin{smallmatrix}
-3 & -2 & -1 & 1  & 2  & 3\\
-1 & -3 & 2 & -2  & 3 & 1
\end{smallmatrix}\bigr)=\pi_0\pi_2\pi_1
$};
\node at (12, -1.5) {${\inv(\sigma)=2,\hspace{0.5em} \ninv(\sigma)=1}$ };

\draw[->] (1) -- (c); 
\draw[->] (2) -- (a);
\draw[->] (3) -- (e);
\draw[->] (4) -- (b); 
\draw[->] (5) -- (k);
\draw[->] (6) -- (d); 

\end{tikzpicture}

 \vspace{0.8cm}
\begin{tikzpicture}[node distance={15mm}, thick, main/.style = {}] 
\node[main] (1) {$-3$}; 
\node[main] (2) [ right of=1] {$-2$}; 
\node[main] (3) [ right of=2] {$-1$}; 
\node[main] (4) [ right of=3] {$1$}; 
\node[main] (5) [ right of=4] {$2$}; 
\node[main] (6) [ right of=5] {$3$ };

\node[main] (a) [below  of=1] {$-3$}; 
\node[main] (b) [below  of=2] {$-2$}; 
\node[main] (c) [ below of=3] {$-1$}; 
\node[main] (d) [ below of=4] {$1$}; 
\node[main] (e) [ below of=5] {$2$}; 
\node[main] (k) [ below of=6] {$3$};
\node at (12, 0) (h) {$\sigma=
\bigl(\begin{smallmatrix}
-3 & -2 & -1 & 1  & 2  & 3\\
-1 & 2 & -3 & 3  & -2 & 1
\end{smallmatrix}\bigr)=\pi_1\pi_2\pi_0\pi_1
$};
\node at (12, -1.5) {${\inv(\sigma)=3,\hspace{0.5em} \ninv(\sigma)=1}$ };

\draw[->] (1) -- (c); 
\draw[->] (2) -- (e);
\draw[->] (3) -- (a);
\draw[->] (4) -- (k); 
\draw[->] (5) -- (b);
\draw[->] (6) -- (d); 

\end{tikzpicture}
\caption{Examples of permutations $B(2)$ and $B(3)$  with inversions.}
\label{permutacjeexmpale}
\end{figure}
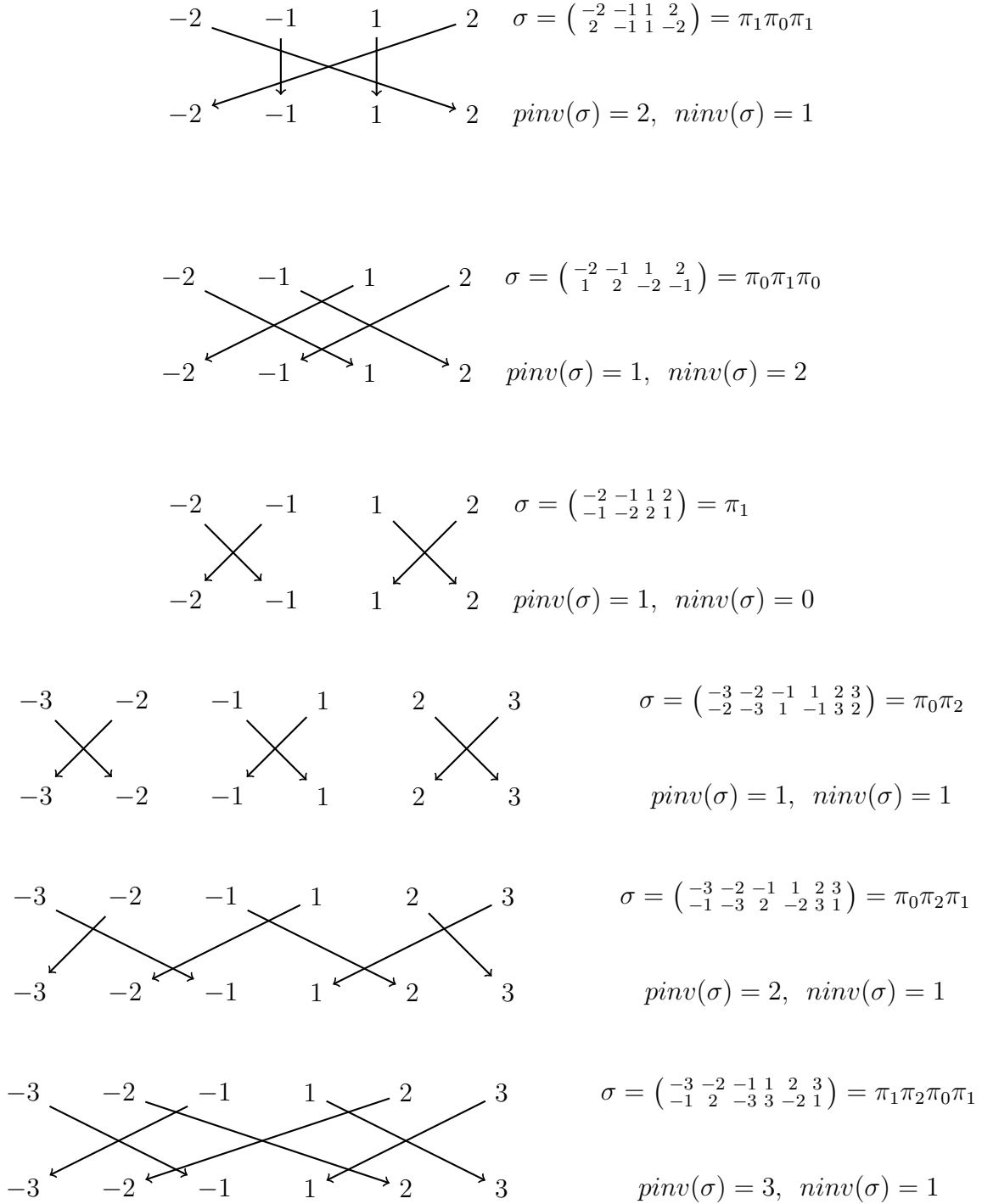
\end{remark}

\begin{definition}\label{Defi:Cumulant} Let $\pi\in \PB(n)$, $\mathbf{B}=((\bar i_m,\dots,\bar i_1), ( i_1,\dots, i_m))  \in \Pair_B(\pi)$, $\lambda_i,\lambda_{\bar i}\in \R$ and 
 $x_{\bar i}\O x_{ i} \in \HH_\R$
 and  {the B cumulant} is defined by
\begin{align*}
&\Cum \BL) \\ &:= 
\begin{cases}
\lambda_{\bar i_1} \lambda_{ i_1 }  & \text{if $\BL$ is a singleton,} \\
\langle x_{\bar i_m}, T_{{\bar i_{m-1}}} \dots T_{{\bar i_{2}}}x_{\bar i_1} \rangle 
\langle x_{ i_m}, T_{{ i_{m-1}}} \dots T_{{ i_{2}}}x_{i_1} \rangle  & \text{otherwise,}
\end{cases}
 \intertext{and their product}
&R_{\pi}^{x ,T,\lambda}:=\prod_{\BL \in \Pair_B(\pi)}\Cum {\BL}).
\end{align*}
\label{def: bcumulant}
\end{definition}

\begin{theorem}\label{lem101}
For any $i\in\{1,\ldots,n\}$  and $x_{\bar i}\otimes x_i \in \HH_\R$  we have
\begin{equation}\label{formula101}
	\begin{split}
		\state(\G(x_{\overline{ n}}\otimes x_{ n})\cdots \G(x_{\bar 1}\otimes x_{ 1}))= \sum_{\pi\in\PB(n)}  \s^{\NB(\pi)}\q^{\Cr(\pi)}  R_{\pi}^{x ,T,\lambda}.
	\end{split}
\end{equation}

\end{theorem}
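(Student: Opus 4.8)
The plan is to expand $\G(x_{\bar n}\otimes x_n)\cdots\G(x_{\bar 1}\otimes x_1)$, apply it to $\Omega\otimes\Omega$, and let the vacuum state $\state$ select the surviving terms. First I would decompose each factor as
\[
\G(x_{\bar i}\otimes x_i)=\B^\ast(x_{\bar i}\otimes x_i)+\B(x_{\bar i}\otimes x_i)+p(T_{\bar i}\otimes T_i)+\lambda_{\bar i}\lambda_i\,I ,
\]
splitting each annihilator into its positive and its $\alpha$-weighted negative part via \eqref{rq}--\eqref{lq}, and each gauge operator into its positive and its $\alpha$-weighted negative part via \eqref{rqT}--\eqref{lqT}. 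Multiplying out turns the left-hand side of \eqref{formula101} into a sum over ``histories'' recording, for each step $i=n,n-1,\dots,1$ (I would evaluate by induction on $n$, peeling the rightmost factor), one of the six colours creation, positive/negative annihilation, positive/negative gauge, scalar, together with --- for the four annihilation and gauge colours --- the open position it acts on, i.e.\ the summation index $k$ in the action formulas \eqref{rq}--\eqref{lqT}. Since $\Omega\otimes\Omega$ is orthogonal to every level $H^{\otimes m}\otimes H^{\otimes m}$ with $m\geq1$, only histories whose composite returns $\Omega\otimes\Omega$ to $\C\Omega\otimes\Omega$ contribute; for the others the state vanishes. After $k$ steps one thus has a linear combination of simple tensors indexed by the blocks still open on top of the stack, each carried by a scalar of the shape $\alpha^{a}q^{b}\cdot(\text{inner products})\cdot(\lambda\text{'s})$.

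The combinatorial heart of the proof is the dictionary, read off \eqref{rq}--\eqref{lqT}, between surviving histories and partitions $\pi\in\PB(n)$. A creation colour opens a new B-pair of legs on top of the stack, simultaneously on the $H$-half and its reflection; an annihilation colour closes an open block, its positive part \eqref{rq} closing a \emph{positive} B-arc and recording the weight $q^{n-k}$ equal to the number of blocks lying above the one closed, its negative part \eqref{lq} closing a \emph{negative} B-arc together with one factor $\alpha$ and the weight $q^{n-1}q^{k-1}$; a gauge colour does not close a block but enlarges the topmost one by applying $\bar T_i$ and $T_i$ (the negative variant \eqref{lqT} additionally costing one factor $\alpha$), so that the operators accumulated over the lifetime of a block compose precisely into the cumulant $\langle x_{\bar i_m},T_{\bar i_{m-1}}\cdots T_{\bar i_2}x_{\bar i_1}\rangle\langle x_{i_m},T_{i_{m-1}}\cdots T_{i_2}x_{i_1}\rangle$ of \cref{def: bcumulant}; a scalar colour leaves the state untouched and produces a B-singleton with factor $\lambda_{\bar i}\lambda_i$. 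I would then check that this assignment is a bijection between the surviving histories and $\PB(n)$ (this is where the convention $|a_1|<|a_k|$ in the definition of a B-block and the bar-invariance $\overline\pi=\pi$ enter), that the exponent of $\alpha$ accumulated along the history attached to $\pi$ equals the number of negative B-arcs $\NB(\pi)$, and that the product of the scalar contributions equals $R_\pi^{x,T,\lambda}$.

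The step I expect to be the real obstacle is the identification of the total accumulated power of $q$ with Biane's restricted-crossing number $\Cr(\pi)$, whose definition before \cref{lem101} counts both $V\stackrel{\mathrm{rc}}{\sim}W$ and $\overline V\stackrel{\mathrm{rc}}{\sim}W$. I would handle it by the standard ``each crossing charged exactly once'' bookkeeping: the local exponent $q^{n-k}$ in \eqref{rq},\eqref{rqT} (respectively $q^{n-1}q^{k-1}$ in \eqref{lq},\eqref{lqT}) is exactly the number of open blocks that the B-arc created or closed at that step reaches over, and each crossing pair of B-arcs of $\pi$ is responsible for precisely one such unit --- charged when the later of the two arcs is processed --- on the $H$-half, the reflected contributions being absorbed by the clause $\overline V\stackrel{\mathrm{rc}}{\sim}W$ in the definition of $\Cr$; summing over all steps and both halves yields $q^{\Cr(\pi)}$. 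This is the double-Fock-space analogue of the count in the proof of \cite[Theorem 3.3]{BEH15} for the type-B Gaussian case and of Anshelevich's computation in \cite{Ans01} in the $q$-Poisson case, and pictorially it is precisely the ``vertical line'' recipe of \cref{jakliczyc} (see \cref{fig:examplejakliczyccrosingi}), with crossings to the left of the centre line corresponding to positive inversions and B-arcs cut by the centre line to negative inversions of the matching $\sigma\in B(n)$; this is the mechanism by which the joint statistics $(\Cr,\NB)$ on $\PB(n)$ take over the role played by $(\inv,\ninv)$ on $B(n)$ in $P^{(n)}_{\alpha,q}$. Assembling the weight $\alpha^{\NB(\pi)}q^{\Cr(\pi)}R_\pi^{x,T,\lambda}$ for each $\pi\in\PB(n)$ gives \eqref{formula101}; since both sides are multilinear and continuous in the vectors $x_{\bar i}\otimes x_i$, the identity may if desired be verified on a spanning family, which also covers the degenerate configurations.
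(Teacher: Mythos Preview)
Your overall strategy --- expand each $\G$ into creation, positive/negative annihilation, positive/negative gauge, and scalar, apply the product to $\Omega\otimes\Omega$, and put surviving histories in bijection with $\pi\in\PB(n)$ --- is exactly the paper's approach. The difference lies in how the $q$-bookkeeping is organized, which is precisely the step you flag as the obstacle.

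Two imprecisions in your sketch. First, the gauge operator does not merely ``enlarge the topmost block''. It acts on an \emph{arbitrary} open block (the index $k$ you mention), applies $T$, and then \emph{moves that block to the outermost tensor position}. This reshuffling of the tensor order is what makes a direct crossing count delicate: the exponent of $q$ accumulated up to an intermediate step is in general \emph{not} equal to $\Cr$ of the partial partition. Second, the crossing of arcs $(c,d)$ and $(m,e)$ with $c<m<d<e$ is charged at step $d$, when the arc with the \emph{smaller} right endpoint is completed, not the later one as you wrote.

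The paper handles the first point by proving a stronger intermediate statement (Theorem~\ref{twr:momentogolne}). It introduces ``extended'' partitions $\PB_E(n)$, in which the still-open blocks carry a marker $E$, together with an auxiliary statistic $\MaxC(\pi)$ that counts, for each open block, the arcs covering its current maximum. The induction on $n$ then establishes
\[
\B^{\epsilon(n)}\cdots \B^{\epsilon(1)}\Omega\otimes\Omega=\sum_{\pi\in\PB_{E;\epsilon}(n)}\alpha^{\NB(\pi)}\,q^{\Cr(\pi)+\MaxC(\pi)}\,\Cumm\,\CummTensor,
\]
so that $\MaxC$ is exactly the ``$q$-debt'' your direct argument would otherwise have to track by hand. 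Applying the vacuum state forces all blocks to close, hence $\MaxC=0$, and summing over $\epsilon\in\{1,\ast,E\}^n$ yields the sum over $\P_{\geq 2}^B(n)$ with exponent $\Cr(\pi)$; the scalars $\lambda_{\bar i}\lambda_i$ are reinstated afterwards by a trivial expansion. Your ``charge each crossing once'' argument can be made to give the same result, but the paper's device makes the induction clean and treats the positive and negative parts uniformly; in your approach the negative case (weight $q^{r+k-2}$, a negative arc straddling the centre line and hence meeting every other open block) requires a separate and careful count that genuinely uses both clauses $V\stackrel{\text{rc}}{\sim}W$ and $\overline V\stackrel{\text{rc}}{\sim}W$ in the definition of~$\Cr$.
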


\subsection{Proof of the main theorem}We begin with some special notation.

\noindent
\textbf{Extended partition.} In order to prove the main theorem, we need the set $\PB_E(n)$ of the so-called extended type-B partitions, which contains the set $\PB(n)$. We use these partitions only in the proof of Theorem \ref{twr:momentogolne}. Here each B-block consists of blocks of size at least two can be additionally marked by $E$. Additionally, each B-singleton is extended. 
\begin{definition}\label{Partycjerozszerzone}

We denote by $\PB_E(n)$ the set of partitions, which contains the set $\PB(n)$ and  such that every block of  partitions of $\pi$ is regular  or extended.
All   singletons are denoted as extended.
Let   $B\in \pi$ and  $\bar B\in \pi$ be a reflection block, then both must be denoted as regular or extended as $$(\bar b,\dots,\bar a),( a,\dots, b) \text{ or }(\bar b,\dots,\bar a)_E,( a,\dots, b)_E.$$ 
\end{definition}

\begin{example}For example,
\begin{align*} \PB_E(2)=&\Big\{\big\{(\bar 2)\primes,(2)\primes,(\bar 1)\primes,(1)\primes\big\}, \big\{(\bar 2,\bar 1),(1,2)\big\}, \big\{(\bar 2,1),(\bar 1,2)\big\}, \big\{(\bar 2,\bar 1)\primes,(1,2)\primes\big\},\\& \big\{(\bar 2,1)\primes,(\bar 1,2)\primes\big\}\Big\}.\end{align*}
\end{example}

\noindent 
For $\pi \in \PB_E(n) $ we denote by ${\BlockE}(\pi)$ the set of pairs of blocks of $\pi$ that are marked by $E$.

\noindent \textbf{\emph{Cover extreme
elements of extended blocks}}. We also need the new statistic to represent the extreme elements covered in the extended blocks. Let 
$\MaxC(\pi)$ for $\pi \in \PB_E(n)$ be the number of \emph{cover extreme elements of extended blocks}
 namely, we define
\begin{align*}
&\MaxC(\pi)=\#\big\{ \bigl(((\bar a_k,\cdots, \bar a_1)\primes,(a_1,\cdots, a_k)\primes),W\bigr)\in {\BlockE}(\pi)\times \Semi_{B}(\pi) \mid 
\\& i< a_k<j \textrm{ equivalently }  \bar j< \bar a_k<\bar i  \textrm{ for } ((\bar j,\bar i),(i,j))\in W\big\},
\end{align*}
which we will use in Theorem \ref{twr:momentogolne}, only.

\begin{example} For the partition presented in Figure \ref{RestrictedCrossings1} we see that 

\begin{itemize}
\item if $\pi=\{(-10,-6,5),(-5,6,10),(-9,-8),(8,9),(-7,-4,-3,1),(-1,3,4,7),\\(-2)\primes,(2)\primes\}$, then $\rc{(\pi)}=3$, $\NB(\pi)=2$, $\InS(\pi)=3$ and $\MaxC(\pi)=3$;
\item if $\pi=\{(-10,-6,5)\primes,(-5,6,10)\primes,(-9,-8),(8,9),(-7,-4,-3,1),(-1,3,4,7),\\(-2)\primes,(2)\primes\}$, then $\rc{(\pi)}=3$, $\NB(\pi)=2$, $\InS(\pi)=3$ and $\MaxC(\pi)=3$; 
\item if $\pi=\{(-10,-6,5),(-5,6,10),(-9,-8)\primes,(8,9)\primes,(-7,-4,-3,1),(-1,3,4,7),\\(-2)\primes,(2)\primes\}$, then $\rc{(\pi)}=3$, $\NB(\pi)=2$, $\InS(\pi)=3$ and $\MaxC(\pi)=4$; 
\item if $\pi=\{(-10,-6,5),(-5,6,10),(-9,-8),(8,9),(-7,-4,-3,1)\primes,(-1,3,4,7)\primes,\\(-2)\primes,(2)\primes\}$, then $\rc{(\pi)}=3$, $\NB(\pi)=2$, $\InS(\pi)=3$ and $\MaxC(\pi)=4$;
\item if $\pi=\{(-10,-6,5),(-5,6,10),(-9,-8)\primes,(8,9)\primes,(-7,-4,-3,1)\primes,(-1,3,4,7)\primes,\\(-2)\primes,(2)\primes\}$, then $\rc{(\pi)}=3$, $\NB(\pi)=2$, $\InS(\pi)=3$ and $\MaxC(\pi)=5$. 
\end{itemize}

\begin{figure}[ht]
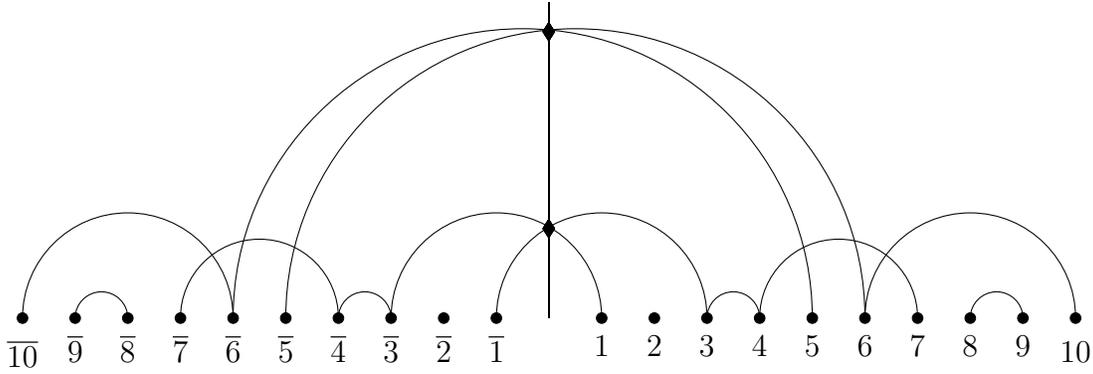

	\begin{center}
		\MatchingMeandersnew{10}{-10/-6, -6/5, -9/-8, 8/9, -5/6, 6/10, -7/-4, -4/-3, -3/1, -1/3, 3/4, 4/7}
	\end{center}  
\caption{The example of partition $\pi \in \P^{B}(10).$}
 \label{RestrictedCrossings1}
\end{figure} 
\end{example}
In order to simplify  notation, we define the following operators i.e. $\OptTTyl_{B_E}=T_{{i_m}} \dots T_{{i_{3}}}T_{{i_{2}}}$ and $\OptTTyl_{\bar B_E}=T_{{\bar i_m}} \dots T_{{\bar i_{3}}}T_{{\bar i_{2}}} $, which map  $H$  into $H$ and which are indexed by the extended block $$(\bar B_E, B_E)=((\underbracket{\bar i_m}_{=\r_{\bar B_E}},\dots,\underbracket{\bar i_1}_{=\l_{\bar B_E}})_E,(\underbracket{i_1}_{=l_{B_E}},\dots,\underbracket{i_m}_{=\r_{B_E}})_E).$$ 
For this B-block we also denote the left and right legs.  
For the regular block $$(\bar B, B)=((\underbracket{\bar i_m}_{=\r_{\bar B}},\dots,\underbracket{\bar i_1}_{=\l_{\bar B}}),(\underbracket{i_1}_{=l_B},\dots,\underbracket{i_m}_{=\r_{B}})).$$  we denote $\OptT_{B}=T_{{i_{m-1}}}T_{{i_{m-2}}} \dots T_{{i_2}}$ and $\OptT_{\bar B}=T_{{\bar i_{m-1}}} T_{{\bar i_{m-2}}} \dots T_{{\bar i_{2}}}$. 

With the notation above we also introduce: 
\begin{align*}
\begin{split}
&\Cumm=\prod_{\substack{ (\bar B,B) \in \Pair_B(\pi)}} \langle x_{\r_{\bar B}}, \OptT_{\bar B}x_{\l_{ \bar B}}\rangle 
 \langle x_{\r_B} ,\OptT_{B}x_{\l_B}\rangle 
 \\ 
&\CummTensor=\Big[\bigotimes_{\substack{\bar B_E \in \pi}}\{ \OptTTyl_{\bar B_E} x_{\l_{ \bar B_E}}\}_{\min \bar B_E}\Big]
\otimes
\Big[\bigotimes_{\substack{ B_E \in \pi}}\{ \OptTTyl_{B_E} x_{\l_{ B_E}}\}_{\min B_E}\Big].
\end{split}
\end{align*}
Notice that in the above formula we use the following bracket notation $\{\cdot\}_{\min B_E}$
, which should
be understood that the position of $\cdot$ (in the tensor product) is ordered with respect to the
$\min B_E$.

\begin{example} 
For example, 
\begin{itemize}
\item if $\pi=\{(-4)\primes,(4)\primes,(-6)\primes,(6)\primes,(-7,-5,-2)\primes,(2,5,7)\primes,(-3,-1)\primes,(1,3)\primes\}$, then $$\CummTensor=T_{{-7}}T_{{-5}}x_{-2} \otimes x_{-6} \otimes x_{-4} \otimes T_{{-3}}x_{-1}\otimes T_{3}x_1 \otimes x_4 \otimes x_6 \otimes T_{7}T_{5}x_2;$$
\item if $\pi=\{(-4)\primes,(4)\primes,(-6)\primes,(6)\primes,(-7,-5,-2)\primes,(2,5,7)\primes,(-3,1)\primes,(-1,3)\primes\}$, then $$\CummTensor=T_{-7}T_{-5}x_{-2} \otimes x_{-6} \otimes x_{-4} \otimes T_{-3}x_{1}\otimes T_{3}x_{-1} \otimes x_4 \otimes x_6 \otimes T_{7}T_{5}x_2.$$
\end{itemize}
\end{example}
We also use the following conventions for $\epsilon\in\{1,\ast,E\}$
\[
\B^{\epsilon}(x_{\bar i}\otimes x_i) = 
\begin{cases}
\B^{*}(x_{\bar i}\otimes x_i) & \text{ if } \epsilon=\ast, \\
p_q(x_{\bar i}\otimes x_i)+ \alpha \ell_{q}({x_{\bar i}\otimes x_i}) & \text{ if } \epsilon=1,
\\
\r_q^{T_{\bar i} \otimes   T_i} + \alpha \ell_{q}^{N, T_{\bar i}  \otimes T_i}& \text{ if } \epsilon=E.
\end{cases}
\]
\noindent

\begin{theorem}\label{twr:momentogolne}
For any $i\in\{1,\ldots,n\}$, $x_{\bar i}\otimes x_i \in \HH_\R$ and any $\epsilon=(\epsilon(1), \dots, \epsilon(n))\in\{1,\ast,E\}^n$, we have
\begin{equation}\label{formula101}
\B^{\epsilon(n)}(x_{\bar n}\otimes x_n)\cdots \B^{\epsilon(1)}(x_{\bar 1}\otimes x_1)\Omega \otimes \Omega = \sum_{\pi\in\PB_{E;\epsilon}(n)} \alpha^{\NB(\pi)}q^{\rc(\pi) +\MaxC(\pi)} \Cumm\CummTensor.
\end{equation}
\end{theorem}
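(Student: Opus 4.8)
I would prove the asserted identity by induction on $n$, stripping off the leftmost (i.e.\ last applied) operator $\B^{\epsilon(n)}(x_{\bar n}\otimes x_n)$. Here $\PB_{E;\epsilon}(n)$ is the set of $\pi\in\PB_E(n)$ compatible with the word $\epsilon$: in each block/singleton $(i_1<\dots<i_m)$ one must have $\epsilon(i_1)=\ast$, $\epsilon(i_j)=E$ for $1<j<m$, and $\epsilon(i_m)$ equal to $1$ or to $E$ according as the block is regular or extended (a singleton being the case $m=1$, $\epsilon(i_1)=\ast$, always extended). The base case $n=0$ is $\Omega\otimes\Omega=\Omega\otimes\Omega$; for $n=1$ one checks by hand that $\B^{\ast}(x_{\bar1}\otimes x_1)\Omega\otimes\Omega=x_{\bar1}\otimes x_1$, which is the contribution of the unique $\pi=\{(\bar1),(1)\}\in\PB_{E;(\ast)}(1)$ (for which $\NB=\rc=\MaxC=0$, $\Cumm=1$, $\CummTensor=x_{\bar1}\otimes x_1$), whereas $\B^{1}$ and $\B^{E}$ annihilate the vacuum and $\PB_{E;(1)}(1)=\PB_{E;(E)}(1)=\emptyset$. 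For the inductive step I would apply $\B^{\epsilon(n)}(x_{\bar n}\otimes x_n)$ to the expansion for $n-1$ supplied by the inductive hypothesis (with $\epsilon'=(\epsilon(1),\dots,\epsilon(n-1))$), using the explicit forms \eqref{rq}, \eqref{lq}, \eqref{rqT}, \eqref{lqT}, that is, $\B^{1}=p_q(x_{\bar n}\otimes x_n)+\alpha\ell_q(x_{\bar n}\otimes x_n)$ and $\B^{E}=\r_q^{T_{\bar n}\otimes T_n}+\alpha\ell_q^{N,T_{\bar n}\otimes T_n}$.

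\textbf{Bijective description of the step.} Each elementary piece adjoins the new extreme element $n$ (and its mirror $\bar n$) to a partition $\pi\in\PB_{E;\epsilon'}(n-1)$, and the plan is to read off from this a bijection with $\PB_{E;\epsilon}(n)$. If $\epsilon(n)=\ast$, then $\B^{\ast}(x_{\bar n}\otimes x_n)$ merely precedes each $\CummTensor$ by $x_{\bar n}$ and follows it by $x_n$; this amounts to adjoining a new extended B-singleton $((\bar n),(n))$ in the two extreme slots, creating no crossing, no cover and no negative arc, leaving $\Cumm$ unchanged, and producing exactly the tensor in the order prescribed by the $\{\cdot\}_{\min}$-convention, so $\pi\mapsto\pi\cup\{(\bar n),(n)\}$ is the desired bijection onto the members of $\PB_{E;\epsilon}(n)$ in which $n$ is a singleton. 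If $\epsilon(n)=E$, the $k$-th summand of $\r_q^{T_{\bar n}\otimes T_n}$ extracts the representative of the $k$-th alive (extended) B-block of $\pi$, applies $T_n$ on the right and $T_{\bar n}$ on the left and reinserts them at the extremes --- i.e.\ it adjoins $n$ to that block, keeping it extended, by a \emph{positive} arc, and the updated left-leg decoration is precisely $\OptTTyl$ of the enlarged block; the term $\alpha\ell_q^{N,T_{\bar n}\otimes T_n}$ does the same with the two halves interchanged, producing a \emph{negative} arc. If $\epsilon(n)=1$, the operator $p_q(x_{\bar n}\otimes x_n)$ instead contracts $x_{\bar n}$ and $x_n$ against the representative of the $k$-th alive B-block and deletes it --- it adjoins $n$ \emph{closing} that block, which thereby loses its $E$-mark, becomes regular, and contributes $\langle x_{\bar n},\OptT_{\bar B}x_{\l_{\bar B}}\rangle\langle x_n,\OptT_{B}x_{\l_{B}}\rangle$ to $\Cumm$; $\alpha\ell_q$ is the crossed, negative-arc analogue. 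In every case the inverse map is ``delete $n$ and $\bar n$'': the block formerly containing $n$ becomes extended in $\pi'$ because its new top element carries $\ast$ or $E$, so $\pi'\in\PB_{E;\epsilon'}(n-1)$ is well defined; summing over $\pi'$ and over the internal choices (the index $k$, and the sign in the cases $E$ and $1$) reconstitutes the sum over $\PB_{E;\epsilon}(n)$.

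\textbf{The main obstacle.} What then remains --- and is the technical core --- is the matching of monomials. The $\alpha$-exponent is immediate: $0$ for $p_q$ and $\r_q$, and $1$ for $\ell_q$ and $\ell_q^{N}$, which is precisely $\Delta\NB$. The hard point is the $q$-exponent: one must verify that the power of $q$ on the $k$-th summand --- $q^{\,m-k}$ in the ``positive'' case and $q^{\,m+k-2}$ in the ``negative'' case, $m$ being the number of alive B-blocks --- equals $q^{\Delta\rc+\Delta\MaxC}$. On the partition side this exponent counts the alive blocks lying between the enlarged block and the new extreme element, and one has to show that each of them contributes exactly one unit, recorded either as a fresh crossing of arcs (in $\rc$) or as the new extreme arc covering the maximal leg of an alive extended block (in $\MaxC$); an already-closed (regular) block nested under the new arc is merely \emph{covered} and contributes nothing, which is why the operator weight registers only the $m$ alive blocks. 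The subtlest bookkeeping, for $\epsilon(n)\in\{1,E\}$, is the transfer between $\MaxC$ and $\rc$ when an extended block's maximal leg is overtaken by $n$: what $\MaxC$ loses there must reappear as crossings in $\rc$. I would organise this using the ``vertical line'' reading of Remark \ref{jakliczyc} and the dictionary with positive and negative inversions in $B(n)$; carrying the extended marks $E$ through the induction --- they are exactly what allows it to close, an unfinished gauge-chain having to be remembered --- is the one genuinely new ingredient relative to the type-A computation in \cite{BEH15}.
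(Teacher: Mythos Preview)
Your proposal is correct and follows essentially the same route as the paper: an induction on $n$, peeling off $\B^{\epsilon(n)}(x_{\bar n}\otimes x_n)$, with the three cases $\epsilon(n)\in\{\ast,1,E\}$ handled exactly as you describe (add a singleton; close an extended block via $p_q$ or $\alpha\ell_q$; extend a block via $\r_q^{T}$ or $\alpha\ell_q^{N,T}$), and with the same bijection ``delete $n,\bar n$''. The paper carries out precisely the bookkeeping you identify as the core, writing, for each case, the increments $\Delta\Cr$, $\Delta\MaxC$, $\Delta\NB$ and checking that $\Delta\Cr+\Delta\MaxC$ matches the operator exponent $r-i$ (positive arc) or $r+i-2$ (negative arc); your observation that the lost $\MaxC$-contributions reappear as crossings is exactly the mechanism the paper uses.
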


\begin{proof}
[Proof of Theorem \ref{twr:momentogolne}]
Given
$\epsilon=(\epsilon(1), \dots, \epsilon(n))\in\{1,\ast, E\}^{n}$, let
$\P_{E;\epsilon}^{B}(n)$ be the set of partitions $\pi\in
\P_{E}^{B}(n)$ such that
\begin{itemize}
 \item if $((\bar a_k ,\dots,\bar a_1) , (a_1,\dots, a_k))$ is a B-block of
$\BlockE(\pi)$ then  $$\epsilon(|a_1|)= \ast,\epsilon(|a_2|)=E,\epsilon(|a_3|)= E,\dots,\epsilon(|a_{k-1}|)=E,\epsilon(|a_k|)=E;$$
 \item if $((\bar a_k ,\dots,\bar a_1) , (a_1,\dots, a_k))$ is a B-block of
	$\Pair_B(\pi)$ then  $$\epsilon(|a_1|)= \ast,\epsilon(|a_2|)=E,\epsilon(|a_3|)= E,\dots,\epsilon(|a_{k-1}|)=E,\epsilon(|a_k|)=1.$$
\end{itemize}
 We denote by $(A,B)$
the concatenation of the
block $A\in \pi$ 
and $B\in \pi$. For example, if $A=(-1,2)$ and $B=(4,5)$, then $(A,B)=(-1,2,4,5)$. 

    Observe that, when $n=1$, then $\B({x_{\bar 1}\O x_{ 1}})\Omega\O  \Omega=p(T_{\bar 1}\O T_{{1}})\Omega\O  \Omega=0$  and $$\B^\ast({x_{\bar 1}\O x_{1}})\Omega\O \Omega={x_{\bar 1}\O x_{ 1}}.$$  
Suppose that  $x_{\bar i}\O x_{ i} \in  \HH_{\R}$, $i\in\{1,\dots,n-1\}$
and any $\epsilon=(\epsilon(1), \dots, \epsilon(n-1))\in\{1,\ast,E\}^n$,  we have
\begin{align*}
\begin{split}
\B^{\epsilon(n-1)}(x_{\overline{n-1}}\O x_{n-1})\cdots \B^{\epsilon(1)}(x_{\bar 1}\O x_{1})\Omega \O \Omega =& \sum_{\pi\in\PB_{E;\epsilon}(n-1)}   \alpha^{\NB(\pi)}q^{\rc(\pi) +\MaxC(\pi)} \Cumm\CummTensor.
\end{split}
\end{align*}
We will show that the action of $\B^{\epsilon(n)}(x_{\bar n}\O x_{ n})$ corresponds to the inductive graphic description of set tensor partitions.
We fix $\pi\in \PB_{E;\epsilon}(n-1)$ and run the argument below over all partitions of this type. Suppose that the set  $$ \BlockE(\pi)=\{(\bar S_E^1,S_E^1),\dots,(\bar S_E^i,S_E^i),\dots,  (\bar S_E^r,S_E^r)\}$$ has pair of blocks  on the positions $\bar s_{ r},\dots,  \bar s_{ 1},s_1,\dots,s_r$ in the following sense:
$$\begin{array}{ccccccccccccccccccc}
\bar s_{ r} & < \cdots  <& \bar s_{ i} &<  \cdots  <& \bar s_{ 1}&<  &s_{1}&<\cdots <& s_{i} &<  \cdots <& s_r
\\ \downarrow &  & \downarrow& &\downarrow&   & \downarrow & & \downarrow & & \downarrow
\\ \scriptstyle{\min \bar S^{ r}_E }& < \cdots <& \scriptstyle{\min \bar S^{ i}_E }&< \cdots  <&\scriptstyle{ \min \bar S^{ 1}_E}&<  & \scriptstyle{\max  S^{ 1}_E} &<\cdots <& \scriptstyle{\max  S^{ i}_E} &<  \cdots <& \scriptstyle{\max  S^{ r}_E }.
\end{array}$$
Suppose that a partition $\pi$ has blocks $( \bar S_E,  S_E)=(\bar S_E^i,S_E^i)\in \BlockE(\pi)$ on the $(\bar s_{ i},s_i)$ position, i.e. $(\bar s_{ i},s_i)=(\min \bar S_E,\max  S_E)$. 
This block  have the following contribution to $ \CummTensor$: 
\begin{align*}
  \{\cdot\}_{\bar s_{
 r}}\otimes\dots\otimes \{\OptTTyl_{\bar S_E}x_{\l_{ \bar S_E}}\}_{\bar s_{ i}}\otimes \dots\otimes\{\cdot\}_{\bar s_{ 1}}\O\{\cdot\}_{s_{ 1}}\otimes\dots\otimes \{\OptTTyl_{S_E}x_{\l_{ S_E}}\}_{s_{i}}\otimes \dots\otimes\{\cdot\}_{s_r}
\end{align*}
Let    $W_{1}=((\bar b_1,\bar a_1),(a_1,b_1)),\dots, W_{u}=((\bar b_u,\bar a_u),(a_u,b_u))$ be the B-arcs, which cover  $(\bar s_{ i},s_{i})$ in a sense that $a_j< s_i <b_j$ (equivalently $\bar b_j< \bar s_{ i} < \bar a_j $) for $j\in\{1,\dots,u\}$. 

\noindent Case 1. If $\epsilon(n)=\ast$, then the operator $\B^\ast(x_{\bar n}\O x_{ n})$ acts on the tensor product by adding (extended) singleton on the left and right. 
This operation pictorially corresponds to adding the singletons  as follows
\begin{align*}
 \{\cdot\}_{\bar s_{
 r}}\otimes\dots\otimes \{\cdot\}_{s_r}   \mapsto\{
(\bar n)\}_{{
\bar n}} \otimes \{\cdot\}_{\bar s_{
 r}}\otimes\dots\otimes \{\cdot\}_{s_r} \otimes  \{(n)\}_{{
 n}} 
\end{align*}


This yields a new partition $\tilde\pi\in \PB_{E;\epsilon}(n)$. 
This map $\pi\mapsto \tilde \pi$ 
does not change the numbers related to our statistic  $ \Cr , \MaxC $ and $\NB$, which is compatible with the fact that the creation action
does not change the coefficient, hence the formula \eqref{formula101} holds when we moved from  $n-1$ to $n$ and $\epsilon(n)=\ast$.

If $\epsilon(n)=1$, then we have two case correspond to \cref{eq:case2proofcum} and \cref{eq:case2proofcum1}. 
Case 2 a). If  $p_q(x_{\bar n}\O x_{ n})$ acts on the tensor product, then new $r$ terms appear. In terms $\bar s_{ i}$ and $s_{i}$ the inner product 
\begin{align}\label{eq:case2proofcum}
\langle x_{\bar n}, \OptTTyl_{\bar S_E}x_{\l_{\bar S_E}} \rangle 
\langle x_{ n }, \OptTTyl_{ S_E}x_{\l_{ S_E}} \rangle
\end{align}
appears with coefficient $q^{r-i+1}$. Graphically this corresponds to getting a set partition $\tilde{\pi} \in \PB_{E;\epsilon}(n)$ by adding $n$ and $\bar n$ to $\pi$ and creating a new regular block  $(\bar n,\bar S)$ and
$(S,n)$ by adding the arcs $(\bar n,\bar s_{ i})$ to $\bar S_E$  and the second  $(s_{i}, n)$ to $ S_E$ with $\e(\bar n)=\e( n)=1$.
 We see that   \cref{eq:case2proofcum} can be written in the form:   
$$\langle x_{\r_{(\bar n,\bar S)}} ,\OptT_{(\bar n,\bar S)}x_{\l_{(\bar n,\bar S)}}\rangle \langle x_{\r_{( S,n)} }, \OptT_{( S,n)}x_{\l_{ ( S,n)}}\rangle.$$
The new B-arc $\BL=((\bar n,\bar s_{ i}),(s_i,n))$ crosses the pair of B-arcs $(\overline{W}_1,W_1)\dots, (\overline{W}_u,W_u)$ and so increases the number of crossings by $u$,  but decreases the number of $\MaxC(\pi)$ by $u$. 
Also new B-arc covers  extended block on positions   $s_{i+1}, \dots, s_{r}$ ($=\bar s_{r}, \dots,\bar  s_{i+1}$). 
Altogether we have:
\begin{align*}  \begin{split}
&\Cr(\tilde{\pi})=\Cr(\pi)+u , \\
&\MaxC(\tilde{\pi})=\MaxC(\pi)-u+r-i, \\
&\NB(\tilde{\pi})=\NB(\pi)
  \end{split} 
  \end{align*}
So we see that the exponent of $q$'s,    increases by $q^{r-(i-1)}$,  see \cref{figcase2} (a). 
\begin{figure}[h]
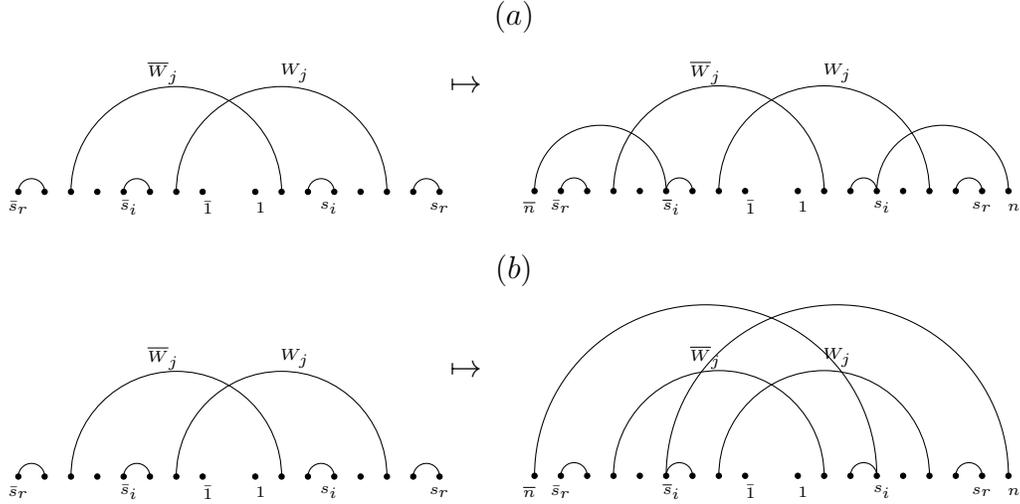

\begin{center}
$(a)$
\end{center}%
\begin{center}
	\MatchingMeandersPositiveTracea{8}{-4/-3,3/4,-6/2,-2/6,-8/-7, 7/8}{} 
	\MatchingMeandersPositiveTraceb{9}{-4/-3,3/4,-6/2,-2/6,-8/-7, 7/8,-9/-4,4/9}{} 
\end{center}  
\begin{center}
$(b)$
\end{center}%
\begin{center}
\MatchingMeandersPositiveTracea{8}{-4/-3,3/4,-6/2,-2/6,-8/-7, 7/8 }{} 
\MatchingMeandersPositiveTraceb{9}{-4/-3,3/4,-6/2,-2/6,-8/-7, 7/8,-9/4,-4/9}{} 
\end{center} 

\caption{The visualization of the actions of $p_\q(x_{\overline{n}}\otimes  x_{n})$ (a) and $\alpha \ell_{q}(x_{\bar n}\O x_{ n})$ (b).}
\label{figcase2}
\end{figure}

Case 2 b). If $\alpha \ell_{q}(x_{\bar n}\O x_{ n})$ acts on the tensor product, then new $r$ terms appear. In $(\bar s_i,s_i)^{th}$ term  the inner product 
\begin{align}\label{eq:case2proofcum1}
\langle x_{\bar n }, \OptTTyl_{S_E}x_{\l_{S_E}} \rangle
\langle x_{n}, \OptTTyl_{\bar S_E}x_{\l_{ \bar S_E}} \rangle
\end{align}
appears with coefficient $\alpha q^{r+i-1}$. 
Graphically this corresponds to getting a set partition $\tilde{\pi} \in \PB_{E;\epsilon}(n)$ by adding $n$ and $\bar n$ to $\pi$ and creating a new regular block 
$(\bar S,n)$ and $(\bar n, S)$  by adding the arcs $(\bar s_{ i},n)$ to $\bar S_E$  and the second  $(\bar n, s_{i})$ to $ S_E$, with $\e(\bar n)=\e( n)=1$.
 We see that   \cref{eq:case2proofcum1} can be written in the form:   
$$\langle x_{\r_{ ( \bar n,S)} }, \OptT_{( \bar n,S)}x_{\l_{( \bar n,S)}}\rangle \langle x_{\r_{( \bar S,n)}} ,\OptT_{(\bar S,n)}x_{\l_{(\bar S, n)}}\rangle .$$
The new negative B-arc $\BL=((\bar n,s_{i}),(\bar s_{ i},n))$ crosses the B-arcs $W_1, \dots, W_{u}$ and so increases the number of crossings by $u$,  but decreases the number of $\MaxC(\pi)$ by $u$. 
Now some new inner extended block on position   $s_{\overline{i-1}},\dots, s_{\bar 1}, s_1,\dots  \check{s_{i}}, \dots, s_{r}$  appear. 
Altogether we have:
\begin{align*}  \begin{split}
&\Cr(\tilde{\pi})=\Cr(\pi)+u , \\
&\MaxC(\tilde{\pi})=\MaxC(\pi)-u+r-1+(i-1), \\
&\NB(\tilde{\pi})=\NB(\pi)+1
  \end{split} 
  \end{align*}
So we see that the exponent  of $\alpha$ increases by $1$
and the exponent of of $q$'s,   increases by $q^{r-1+(i-1)}$,  see Figure \ref{figcase2} (b).

Case 3. If $\epsilon(n)=E$, then we have two cases corresponding to $\r_q^{T_{\bar n}\otimes T_n}$ and $\alpha \ell_{q}^{N,T_{\bar n}\otimes T_n}$. If  we use the equation \eqref{rqT} and  \eqref{lqT}, then in the $(\bar s_{ i},s_i)^{\rm th}$ term   we delete the elements $$\OptTTyl_{S_E}x_{\l_{ S_E}}\text{ and } \OptTTyl_{\bar S_E}x_{\l_{ \bar S_E}}\text{ from }\CummTensor,$$   
and move them on the \emph{edge} positions  as shown in  Figure \ref{fig:FiguraExemple4}, which finally contribute to a new component $\widehat{\mathrm{ R}}^\mb{x}_{\tilde\pi}$ . 
The new coefficients are now also appearing  $q^{r-i}$ and 
 $\alpha q^{r+i-2}$ in the action  \eqref{rqT} and  \eqref{lqT} respectively. 

 \begin{figure}[h]
\begin{center}
  \begin{tikzpicture}[thick,font=\small,scale=.9]
     \path 
           (-10,0) node[] (b) {\tiny{$\widehat{\mathrm{ R}}^\mb{x}_{\tilde\pi}=$}}
           (-8,0) node[] (bc) {\tiny{$T_{\bar n}(\OptTTyl_{\bar S_E}x_{\l_{\bar S_E}})\otimes$}}
           (-8,0.4) node[] (pom1) {$\blacktriangledown$}
           (-6.3,0) node[] (d) {\tiny{$...$}}
           (-4.5,0) node[] (g) {\tiny{$\otimes\{\xcancel{\OptTTyl_{\bar S_E}x_{\l_{\bar S_E}}}\}_{\bar s_{ i}}\otimes$}}
           (-2.75,0) node[] (gg) {\tiny{$...$}}
           (-2,0) node[] (gg) {\tiny{$\otimes\{\cdot\}_{s_{\bar 1}}$}}
             (-1,2) node[] (wed) {$\r_q^{T_{\bar n}\otimes T_n}$}
           (-1,0) node[] (gg) {$\O$};
     \draw[thick] (bc) -- +(0,1) -| (g);
   \hspace{-0.9cm}
     \path 
           (1,0) node[] (bc) {\tiny{$\{\cdot\}_{s_{1}}\otimes$}} 
           (6.9,0.4) node[] (pom1) {$\blacktriangledown$}
           (1.72,0) node[] (d) {\tiny{$...$}}
           (3.4,0) node[] (g) {\tiny{$\otimes \{\xcancel{\OptTTyl_{S_E}x_{\l_{ S_E}}}\}_{s_{i}}\otimes$}}
      (5.2,0) node[] (gg) {\tiny{$...$}}
           (6.9,0) node[] (gg) {\tiny{$ \otimes T_{{n}}(\OptTTyl_{ S_E}x_{\l_{  S_E}})$}} ;
     \draw[thick] (g) -- +(0,1) -| (gg);
   \end{tikzpicture}
   \end{center}  
\begin{center}
  \begin{tikzpicture}[thick,font=\small,scale=.9]
     \path 
           (-10,0) node[] (b) {\tiny{$\widehat{\mathrm{ R}}^\mb{x}_{\tilde\pi}=$}}
           (-8,0) node[] (bc) {\tiny{$T_{\bar n}(\OptTTyl_{ S_E}x_{\l_{S_E}})\otimes$}}
           (-8,0.4) node[] (pom1) {$\blacktriangledown$}
           (-6.3,0) node[] (d) {\tiny{$...$}}
           (-4.5,0) node[] (g) {\tiny{$\otimes\{\xcancel{\OptTTyl_{\bar S_E}x_{\l_{\bar S_E}}}\}_{\bar s_{ i}}\otimes$}}
           (-2.75,0) node[] (gg) {\tiny{$...$}}
           (-2,0) node[] (gg) {\tiny{$\otimes\{\cdot\}_{s_{\bar 1}}$}}
           (-1,0) node[] (gg) {$\O$}
           (0,0) node[] (bcb) {\tiny{$\{\cdot\}_{s_{1}}\otimes$}} 
           (5.9,0.4) node[] (pom1) {$\blacktriangledown$}
           (0.72,0) node[] (d) {\tiny{$...$}}
           (2.4,0) node[] (gd) {\tiny{$\otimes \{\xcancel{\OptTTyl_{S_E}x_{\l_{ S_E}}}\}_{s_{i}}\otimes$}}
      (4.2,0) node[] (gg) {\tiny{$...$}}
           (-1,2.5) node[] (wed) {$\ell_{q}^{N,T_{\bar n}\otimes T_n}$}
           (5.9,0) node[] (ggg) {\tiny{$ \otimes T_{{n}}(\OptTTyl_{\bar S_E}x_{\l_{\bar  S_E}})$}};
       \draw[thick] (bc) -- +(0,1) -| (gd);
       \draw[thick] (g) -- +(0,1.5) -| (ggg);
   \end{tikzpicture}
   \end{center}  
 \caption{The visualization of the action $\r_q^{T_{\bar n}\otimes T_n}$ and $\ell_{q}^{N,T_{\bar n}\otimes T_n}$ on the 
 tensor product $\CummTensor$. }
\label{fig:FiguraExemple4}
\end{figure}
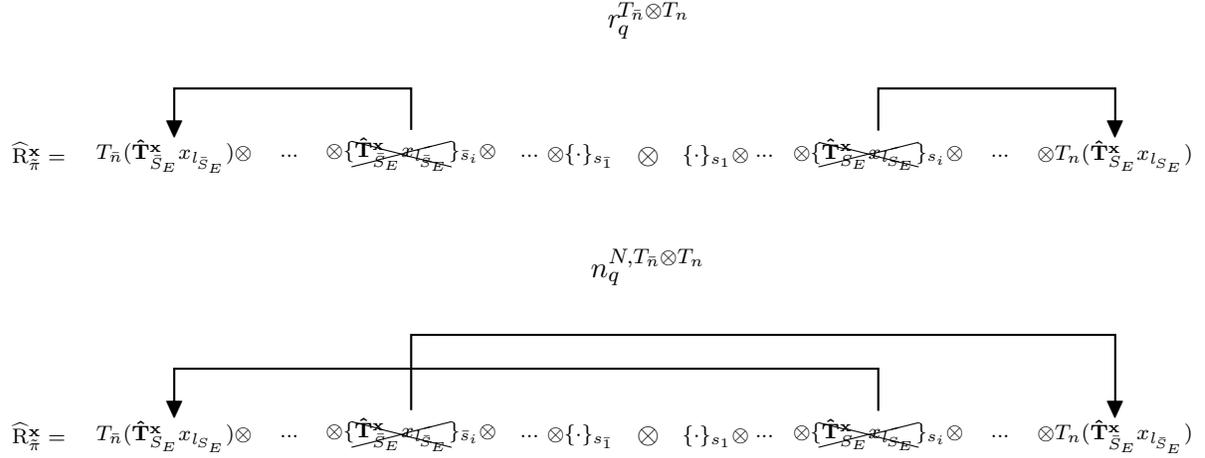

In the action  \eqref{rqT}  we get a new partition $\tilde{\pi} \in \PB_{E;\epsilon}(n)$ by adding $n$ to $S_E$, $\bar n$ to $\bar S_E$ (with the first arc $(\bar n,\bar s_{ i})$  and $(s_i,n)$  and creating two blocks in $ \BlockE(\tilde{\pi})$ with $\e(\bar n)=\e( n)=E$. Now the  extreme elements  of newly created blocks are $n$ and $\bar n$ and  so we can calculate the change in the statistic generated by the new arc in Case 2a, because new arcs cannot be covered of some  arc. 
This situation is also compatible with changes inside the tensor product, i.e.
$$\OptTTyl_{(S,n)_E}x_{\l_{ S_E}}=T_{{n}}(\Tens_{S_E} x_{\l_{ S_E}})\text{ and }\OptTTyl_{(\bar n,\bar S)_E}x_{\l_{ \bar S_E}}=T_{{\bar n}}(\Tens_{\bar S_E} x_{\l_ {\bar S_E}}).$$

We proceed similarly in the case of the formula \eqref{lqT} and create the new extended block by adding $n$ to $\bar S_E$, $\bar n$ to $ S_E$, which contribute to the new tensor product
$$\OptTTyl_{(\bar n,S)_E}x_{\l_{ S_E}}=T_{{\bar n}}(\Tens_{S_E} x_{\l_{ S_E}})\text{ and }\OptTTyl_{(\bar S,n)_E}x_{\l_{ \bar S_E}}=T_{{ n}}(\Tens_{\bar S_E} x_{\l_ {\bar S_E}}).$$

Note that as $\pi$ runs over $\pi\in \PB_{E;\epsilon}(n-1)$, every set partition $\tilde{\pi}\in \PB_{E;\epsilon}(n)$  appears exactly once either in   Case 1, 2 or in Case 3 as shown by induction that the formula \eqref{formula101} holds for all $n \in \N$. 
\end{proof}
\begin{proof}
[Proof of Theorem \ref{lem101}]
We now present the final step. First, let us notice that for $\epsilon\in\{1,\ast,E\}^n$ we have
\begin{align} \label{wick:prawieglowna}
&\state\big(\B^{\epsilon(n)}(x_{\bar n}\O x_{ n}) \cdots \B^{\epsilon(1)}(x_{\bar 1}\O x_{1}) \big)=
\sum_{ \substack{\pi\in\PB_{E;\epsilon}( n)\\ \BlockE(\pi)=\emptyset
}}   
\alpha^{\NB(\pi)}q^{\rc(\pi) } \Cumm\end{align}
Indeed, from equation \eqref{formula101} we see that the following condition must hold: $\CummTensor=\Omega\O  \Omega$. This will happen if and only if $\BlockE(\pi) =\emptyset,$ which implies \eqref{wick:prawieglowna}. If  $\BlockE(\pi)=\emptyset$, then $\Cumm=R_{\pi}^{x,T,0}$  so by taking the sum over all $\epsilon$ from equation \eqref{wick:prawieglowna}, we see that
\begin{align}\label{eq:rownaiemomentysrzero}
\begin{split}
\state&\Big( \big(\G^{\lambda_{\bar n}\otimes \lambda_n}(x_{\overline{ n}}\otimes x_{ n})-\lambda_{\bar n} \O \lambda_{ n}\big) \cdots  \big(\G^{\lambda_{\bar 1}\otimes \lambda_1}(x_{\overline{ 1}}\otimes x_{ 1})-\lambda_{\bar 1}\O\lambda_{ 1} \big)\Big)\\&=
\state(\G(x_{\overline{ n}}\otimes x_{ n})\cdots \G(x_{\bar 1}\otimes x_{ 1}))\\&=\sum_{\e\in\{1,\ast,E\}^{n}} \state( \B^{\epsilon(n)}(x_{\overline{n}}\otimes  x_{n})\cdots \B^{\epsilon(1)}(x_{\overline{1}}\otimes  x_1)) \\&=\sum_{\e\in\{1,\ast, E\}^{n}}  \sum_{ \substack{\pi\in\PB_{E;\epsilon}( n)\\ \BlockE(\pi)=\emptyset
}}  \s^{\NB(\pi)}q^{\Cr(\pi)} 
R_{\pi}^{x,T,0} 
\\&=  \sum_{ \substack{\pi\in\PB( n)
\\ 
\Sing(\pi)=\emptyset 
}}  \s^{\NB(\pi)}q^{\Cr(\pi)} 
R_{\pi}^{x,T,0} 
\end{split}
\end{align}
Where in the above formula we used  the following fact
\begin{align*}
&\bigsqcup_{\e\in\{1,\ast,E\}^{n}} \{ \pi\in \PB_{E,\e} (n) \mid \BlockE(\pi)=\emptyset\}=\\&\bigsqcup_{\e\in\{1,\ast,E\}^{n}} \{\pi \in \P_{E,\e}^{B}(n)\mid (\bar a_k,\dots,\bar a_1 ), (a_1,\dots,a_k)\in \pi  \implies \\& \text{ $\epsilon(|a_1|)= \ast,\epsilon(|a_{2}|)=E, 
\dots  , \epsilon(|a_{k-1}|)= E,\epsilon(|a_k|)=1$}\}
\}
\\&=\P_{\geq 2}^B(n).
\end{align*}

We also see that
\begin{align*} 
&\state\Big( \big(\G^{\lambda_{\bar n}\otimes \lambda_n}(x_{\overline{ n}}\otimes x_{ n})-\lambda_{\bar n}\O\lambda_{ n} +\lambda_{\bar n}\O\lambda_{n} \big) \cdots \big(\G^{\lambda_{\bar 1}\otimes \lambda_1}(x_{\overline{ 1}}\otimes x_{ 1})-\lambda_{\bar 1}\O \lambda_{ 1}+\lambda_{\bar 1}\O \lambda_{ 1}\big)\Big)
\intertext{by equation \eqref{eq:rownaiemomentysrzero}, we get}
&=\sum_{V \subset \{1,\dots,n\}} \Bigg[\prod_{i\in V} \lambda_i\lambda_{\bar i} \sum_{\pi \in \PB_{\geq 2}([n]\setminus V)}  \alpha^{\NB(\pi)}q^{\rc(\pi) }  R_{\pi}^{x,T,0} \Bigg]
\\&= \sum_{\pi \in \PB(n)}  \alpha^{\NB(\pi)}q^{\rc(\pi) }  R_{\pi}^{x,T,\lambda}=\state \big(\G^{\lambda_{\bar n}\otimes \lambda_n}(x_{\overline{ n}}\otimes x_{ n}) \cdots \G^{\lambda_{\bar 1}\otimes \lambda_1}(x_{\overline{ 1}}\otimes x_{ 1})\big)
\end{align*} 

\end{proof}

\begin{example} \label{exkumulanty}The   set of  partitions of type B for $$\state(\G(x_{\overline{ 4}}\otimes x_{ 4})\cdots \G(x_{\bar 1}\otimes x_{ 1}))$$ 
can be graphically represented as shown in  \cref{figexample1}.
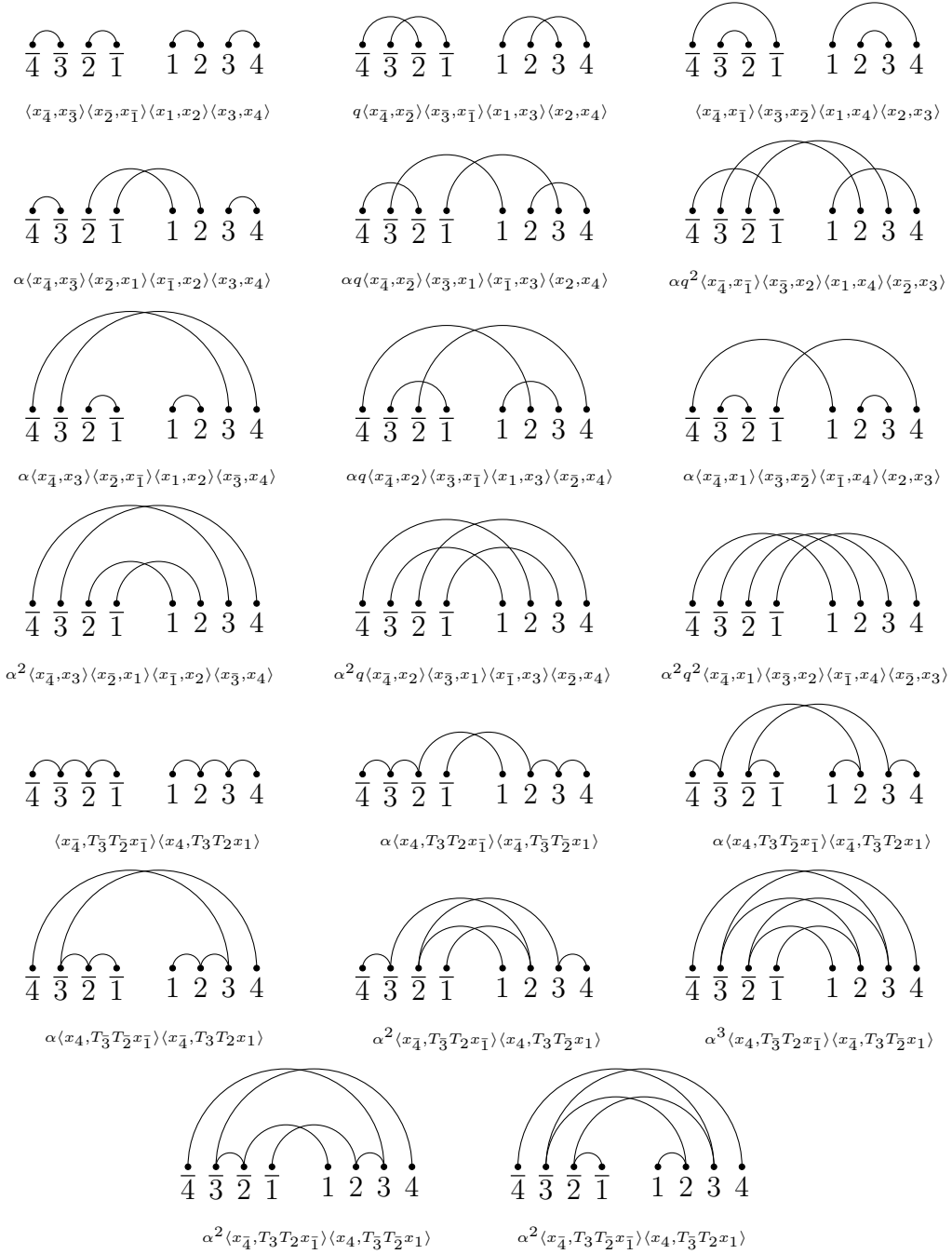
\begin{figure}[htp]

\begin{center}
\MatchingMeandersab{4}{-4/-3, -2/-1,1/2,3/4 }{} \hspace{1.5em} \MatchingMeandersab{4}{-4/-2, -3/-1,1/3,2/4 }{} \hspace{1.5em} \MatchingMeandersab{4}{-4/-1, -3/-2,2/3,1/4 }{} 
\end{center} 
\begin{tikzpicture}[scale=0.5]
\node at (0,1) {};
\node at (4,1) {$ \scriptscriptstyle{\langle x_{\bar 4}, x_{\bar 3}\rangle \langle x_{\bar 2}, x_{\bar 1}\rangle  \langle x_{1}, x_{2}\rangle\langle x_{3}, x_{4}\rangle }$};
\node at (13.5,1) {$\scriptscriptstyle{\q \langle x_{\bar 4}, x_{\bar 2}\rangle \langle x_{\bar 3}, x_{\bar 1}\rangle  \langle x_{1}, x_{3}\rangle\langle x_{2}, x_{4}\rangle }$};
\node at (23,1) {$\scriptscriptstyle{\ \langle x_{\bar 4}, x_{\bar 1}\rangle \langle x_{\bar 3}, x_{\bar 2}\rangle  \langle x_{1}, x_{4}\rangle\langle x_{2}, x_{3}\rangle }$};
\end{tikzpicture} 
\begin{center} 
\MatchingMeandersab{4}{-4/-3, -2/1,-1/2,3/4 }{} \hspace{1.5em} \MatchingMeandersab{4}{-4/-2, -3/1,-1/3,2/4 }{} 
\hspace{1.5em} \MatchingMeandersab{4}{-4/-1, -3/2,-2/3,1/4 }{} 
\end{center}  
\begin{tikzpicture}[scale=0.5]
\node at (0,1) {};
\node at (4,1) {$ \scriptscriptstyle{\s\langle x_{\bar 4}, x_{\bar 3}\rangle \langle x_{\bar 2}, x_{ 1}\rangle  \langle x_{\bar 1}, x_{2}\rangle\langle x_{3}, x_{4}\rangle }$};
\node at (13.5,1) {$\scriptscriptstyle{\s \q \langle x_{\bar 4}, x_{\bar 2}\rangle \langle x_{\bar 3}, x_{1}\rangle  \langle x_{\bar 1}, x_{3}\rangle\langle x_{2}, x_{4}\rangle }$};
\node at (23,1) {$\scriptscriptstyle{\alpha\q^2 \langle x_{\bar 4}, x_{\bar 1}\rangle \langle x_{\bar 3}, x_{ 2}\rangle  \langle x_{1}, x_{4}\rangle\langle x_{\bar 2}, x_{3}\rangle }$};
\end{tikzpicture} 
\begin{center} 
\MatchingMeandersab{4}{-4/3, -2/-1,1/2,-3/4 }{} \hspace{1.5em} \MatchingMeandersab{4}{-4/2, -3/-1,1/3,-2/4 }{} \hspace{1.5em} \MatchingMeandersab{4}{-4/1, -3/-2,2/3,-1/4 }{} 
\end{center} 
\begin{tikzpicture}[scale=0.5]
\node at (0,1) {};
\node at (4,1) {$ \scriptscriptstyle{\s\langle x_{\bar 4}, x_{ 3}\rangle \langle x_{\bar 2}, x_{ \bar 1}\rangle  \langle x_{ 1}, x_{ 2}\rangle\langle x_{\bar 3}, x_{4}\rangle }$};
\node at (13.5,1) {$\scriptscriptstyle{\s\q \langle x_{\bar 4}, x_{ 2}\rangle \langle x_{\bar 3}, x_{\bar 1}\rangle  \langle x_{ 1}, x_{3}\rangle\langle x_{\bar 2}, x_{4}\rangle }$};
\node at (23,1) {$\scriptscriptstyle{\s\langle x_{\bar 4}, x_{ 1}\rangle \langle x_{\bar 3}, x_{ \bar 2}\rangle  \langle x_{\bar 1}, x_{4}\rangle\langle x_{ 2}, x_{3}\rangle }$};
\end{tikzpicture}  
\begin{center} 
\MatchingMeandersab{4}{-4/3, -2/1,-1/2,-3/4 }{} \hspace{1.5em} \MatchingMeandersab{4}{-4/2, -3/1,-1/3,-2/4 }{} 
\hspace{1.5em} \MatchingMeandersab{4}{-4/1, -3/2,-2/3,-1/4 }{} 
\end{center}
\begin{tikzpicture}[scale=0.5]
\node at (0,1) {};
\node at (4,1) {$ \scriptscriptstyle{\s^2\langle x_{\bar 4}, x_{ 3}\rangle \langle x_{\bar 2}, x_{  1}\rangle  \langle x_{ \bar 1}, x_{ 2}\rangle\langle x_{\bar 3}, x_{4}\rangle }$};
\node at (13.5,1) {$\scriptscriptstyle{\s^2\q \langle x_{\bar 4}, x_{ 2}\rangle \langle x_{\bar 3}, x_{1}\rangle  \langle x_{\bar  1}, x_{3}\rangle\langle x_{\bar 2}, x_{4}\rangle }$};
\node at (23,1) {$\scriptscriptstyle{\s^2q^2\langle x_{\bar 4}, x_{ 1}\rangle \langle x_{\bar 3}, x_{ 2}\rangle  \langle x_{\bar 1}, x_{4}\rangle\langle x_{ \bar 2}, x_{3}\rangle }$};
\end{tikzpicture}

\begin{center}
\MatchingMeandersab{4}{-4/-3, -3/-2, -2/-1, 1/2, 2/3, 3/4 }{} \hspace{1.5em} \MatchingMeandersab{4}{-4/-3, -3/-2, -2/1, -1/2, 2/3, 3/4}{} \hspace{1.5em} \MatchingMeandersab{4}{-4/-3, -3/2, 1/2, -2/-1, -2/3, 3/4}{} 
\end{center} 
\begin{tikzpicture}[scale=0.5]
\node at (0,1) {};
\node at (4,1) {$ \scriptscriptstyle{\langle x_{\bar 4}, T_{\bar 3} T_{\bar 2}x_{\bar 1} \rangle \langle x_{ 4}, T_{3} T_{2}x_{1} \rangle }$};
\node at (13.5,1) {$\scriptscriptstyle{\s \langle x_{4}, T_3 T_2 x_{\bar 1} \rangle \langle  x_{\bar 4}, T_{\bar 3} T_{\bar 2} x_{1} \rangle }$};
\node at (23,1) {$\scriptscriptstyle{\s \langle x_4, T_3 T_{\bar 2} x_{\bar 1} \rangle \langle x_{\bar 4}, T_{\bar 3} T_{2} x_{1} \rangle}$};
\end{tikzpicture}

\begin{center}
\MatchingMeandersab{4}{-4/3, 1/2, 2/3, -3/4, -2/-1, -3/-2}{} \hspace{1.5em} \MatchingMeandersab{4}{-4/-3, -3/2, -1/2, -2/1, -2/3, 3/4}{} \hspace{1.5em} \MatchingMeandersab{4}{-4/3, -2/3, -2/1, -1/2, -3/2, -3/4}{} 
\end{center} 

\begin{tikzpicture}[scale=0.5]
\node at (0,1) {};
\node at (4,1) {$ \scriptscriptstyle{\s \langle x_4, T_{\bar 3} T_{\bar 2}x_{\bar 1} \rangle \langle x_{\bar 4}, T_{3} T_{2}x_{1}\rangle }$};
\node at (13.5,1) {$\scriptscriptstyle{\s^2 \langle x_{\bar 4}, T_{\bar 3} T_2 x_{\bar 1} \rangle \langle x_{4}, T_{3} T_{\bar 2} x_{1} \rangle }$};
\node at (23,1) {$\scriptscriptstyle{\s^3 \langle x_4, T_{\bar 3} T_2 x_{\bar 1} \rangle \langle x_{\bar 4}, T_{3} T_{\bar 2} x_{1} \rangle }$};
\end{tikzpicture}

\begin{center}
\MatchingMeandersab{4}{-2/1, -3/-2, -3/4, -1/2, 2/3, -4/3}{} \hspace{1.5em} \MatchingMeandersab{4}{1/2, -3/2, -3/4, -2/-1, -2/3, -4/3}{} \hspace{1.5em} 
\end{center} 

\begin{tikzpicture}[scale=0.5]
\node at (0,1) {};
\node at (2,1) {$ \scriptscriptstyle{\s^2 \langle x_{\bar 4}, T_3 T_2 x_{\bar 1} \rangle \langle x_{4}, T_{\bar 3} T_{\bar 2} x_{1} \rangle  }$};
\node at (11,1) {$\scriptscriptstyle{\s^2 \langle x_{\bar 4}, T_3 T_{\bar 2} x_{\bar 1} \rangle \langle x_{4}, T_{\bar 3} T_{2} x_{1} \rangle }$};
\end{tikzpicture}
\caption{The statistics and the set of partitions of type B for the fourth moment. }
\label{figexample1}
\end{figure}

\end{example}
Let $\An$ be  the von Neumann algebra generated by $\{\G(x\otimes y)\mid x\otimes y \in \HH_\R\}$ for $|q|<1$.
\begin{proposition}
Suppose that $\dim(H_{\R}) \geq 2$, $q, \alpha \in (-1,1)$ and $T=\bar{T}=I$. 
Then from Example \ref{exkumulanty}  we can easily calculate   that the vacuum state is not a trace on $\An$. 
\end{proposition}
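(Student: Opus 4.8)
The plan is to exhibit two bounded elements $X,Y$ of $\An$ with $\state(XY)\neq\state(YX)$; since $\state$ is a trace if and only if $\state(XY)=\state(YX)$ for all $X,Y\in\An$, this is enough. Fix a unit vector $e\in H_\R$ and put $b:=e\otimes e\in\HH_\R$. The structural observation driving the argument is that, because $T=\bar T=I$, the gauge part of $\G^{0,0}(x\otimes y)$ equals the \emph{fixed} operator $p(I\otimes I)$ for every $x\otimes y$, whereas $\B$ and $\B^\ast$ are odd under $b\mapsto-b$. Hence
\[
N:=p(I\otimes I)=\tfrac12\bigl(\G(b)+\G(-b)\bigr)\in\An,\qquad \phi:=\B(b)+\B^\ast(b)=\G(b)-N\in\An,
\]
and both are bounded (by \cref{Ansh+} together with the norm bound on $\B^\ast$) and self-adjoint for $\langle\cdot,\cdot\rangle_{\alpha,q}$ (by \cref{Prop:samosprzezone}).

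I would then evaluate the relevant moments by letting the operators act on $\Omega\otimes\Omega$. From the definitions one has $p(I\otimes I)(\Omega\otimes\Omega)=0$; on the degree-one component $H\otimes H$ the operator $N=p_0(I\otimes I)R^{(1)}_{\alpha,q}$ acts by $Nb=(1+\alpha)b$, because $R^{(1)}_{\alpha,q}=I+\alpha\pi_0$ and $\pi_0$ fixes $e\otimes e$; and $\phi(\Omega\otimes\Omega)=b$, while by \eqref{rq}--\eqref{lq},
\[
\phi(b)=(e\otimes e\otimes e\otimes e)+(1+\alpha)(\Omega\otimes\Omega),
\]
the vacuum term arising from the annihilation part $\B(b)b=(1+\alpha)(\Omega\otimes\Omega)$. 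Since $\langle\cdot,\cdot\rangle_{\alpha,q}$ vanishes between components of different degree, it follows that
\[
\state(\phi\,N\,\phi)=\bigl\langle\Omega\otimes\Omega,\ (1+\alpha)\,\phi(b)\bigr\rangle_{\alpha,q}=(1+\alpha)^2,\qquad \state(N\,\phi^2)=\bigl\langle N(\Omega\otimes\Omega),\ \phi^2(\Omega\otimes\Omega)\bigr\rangle_{\alpha,q}=0,
\]
using in the second equality that $N=N^\ast$ kills $\Omega\otimes\Omega$.

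Taking $X=\phi$ and $Y=N\phi$ (both in $\An$) then gives $\state(XY)=\state(\phi N\phi)=(1+\alpha)^2$ and $\state(YX)=\state(N\phi^2)=0$; as $\alpha\in(-1,1)$ forces $1+\alpha\in(0,2)$, these are different, so $\state$ is not a trace on $\An$ (indeed a single unit vector suffices, and the argument is uniform in $q$). The only points I expect to require care are the bookkeeping that realizes $N=p(I\otimes I)$ as an element of $\An$ and the elementary computations of $N$ and $\phi$ on degrees $0,1,2$ of the Fock space; these are exactly special cases of \cref{twr:momentogolne}, so one may equivalently read off the same contradiction from the fourth--moment table of \cref{exkumulanty}, by comparing $\state(\G(x_{\bar4}\otimes x_4)\cdots\G(x_{\bar1}\otimes x_1))$ with its cyclic rotation for a suitable choice of the vectors $x_{\bar i}\otimes x_i$.
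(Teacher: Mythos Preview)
Your proof is correct and takes a genuinely different route from the paper's. The paper compares the fourth moment $\state(\G(x_{\bar 4}\otimes x_4)\cdots\G(x_{\bar 1}\otimes x_1))$ with its cyclic rotation, plugging in two orthogonal unit vectors $e_1,e_2$ (this is where $\dim H_\R\geq 2$ is used) via $x_{\bar 1}=x_4=e_1$, $x_1=x_{\bar 4}=e_2$, $x_{\bar 2}=x_3=e_1$, $x_2=x_{\bar 3}=e_2$, and computes the difference to be $\alpha^2q^2-4\alpha^2+3\alpha+\alpha^3-1$, which is $-1$ at $\alpha=0$ and hence not identically zero.

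Your argument is both shorter and sharper: by exploiting that the gauge part $p(I\otimes I)$ is independent of the vector while $\B,\B^\ast$ are odd, you isolate $N=p(I\otimes I)$ and $\phi=\B(b)+\B^\ast(b)$ as \emph{separate} elements of $\An$, and a third-moment calculation then gives $\state(\phi N\phi)=(1+\alpha)^2\neq 0=\state(N\phi^2)$. This uses only a single vector (so in fact proves more than the stated hypothesis $\dim H_\R\geq 2$ requires), is visibly uniform in $q$, and the obstruction $(1+\alpha)^2\neq 0$ is transparent. The paper's approach, by contrast, stays closer to the advertised use of Example~\ref{exkumulanty} and illustrates the combinatorial moment formula, at the cost of a longer computation and a less obvious nonvanishing condition. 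Your realization of $N\in\An$ via $\tfrac12(\G(e\otimes e)+\G((-e)\otimes e))$ is legitimate since $(-e)\otimes e\in\HH_\R$; the rest is indeed routine bookkeeping.
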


\begin{proof}[Proof of the last statement]
By using Example \ref{exkumulanty}, we obtain
\begin{align*}
\state(\G(x_{\overline{ 4}}\otimes x_{ 4})&\G(x_{\overline{ 3}}\otimes x_{ 3})\G(x_{\overline{ 2}}\otimes x_{ 2}) \G(x_{\bar 1}\otimes x_{ 1}))\\&=\langle x_{\bar 4}, x_{\bar 3}\rangle \langle x_{\bar 2}, x_{\bar 1}\rangle  \langle x_{1}, x_{2}\rangle\langle x_{3}, x_{4}\rangle +\q \langle x_{\bar 4}, x_{\bar 2}\rangle \langle x_{\bar 3}, x_{\bar 1}\rangle  \langle x_{1}, x_{3}\rangle\langle x_{2}, x_{4}\rangle \\&+ \langle x_{\bar 4}, x_{\bar 1}\rangle \langle x_{\bar 3}, x_{\bar 2}\rangle  \langle x_{1}, x_{4}\rangle\langle x_{2}, x_{3}\rangle +\s\langle x_{\bar 4}, x_{\bar 3}\rangle \langle x_{\bar 2}, x_{ 1}\rangle  \langle x_{\bar 1}, x_{2}\rangle\langle x_{3}, x_{4}\rangle \\&+\s \q \langle x_{\bar 4}, x_{\bar 2}\rangle \langle x_{\bar 3}, x_{1}\rangle  \langle x_{\bar 1}, x_{3}\rangle\langle x_{2}, x_{4}\rangle +\alpha\q^2 \langle x_{\bar 4}, x_{\bar 1}\rangle \langle x_{\bar 3}, x_{ 2}\rangle  \langle x_{1}, x_{4}\rangle\langle x_{\bar 2}, x_{3}\rangle 
\\&+ \s\langle x_{\bar 4}, x_{ 3}\rangle \langle x_{\bar 2}, x_{ \bar 1}\rangle  \langle x_{ 1}, x_{ 2}\rangle\langle x_{\bar 3}, x_{4}\rangle +\s\q \langle x_{\bar 4}, x_{ 2}\rangle \langle x_{\bar 3}, x_{\bar 1}\rangle  \langle x_{ 1}, x_{3}\rangle\langle x_{\bar 2}, x_{4}\rangle \\&+\s\langle x_{\bar 4}, x_{ 1}\rangle \langle x_{\bar 3}, x_{ \bar 2}\rangle  \langle x_{\bar 1}, x_{4}\rangle\langle x_{ 2}, x_{3}\rangle +\s^2\langle x_{\bar 4}, x_{ 3}\rangle \langle x_{\bar 2}, x_{  1}\rangle  \langle x_{ \bar 1}, x_{ 2}\rangle\langle x_{\bar 3}, x_{4}\rangle \\&+\s^2\q \langle x_{\bar 4}, x_{ 2}\rangle \langle x_{\bar 3}, x_{1}\rangle  \langle x_{\bar  1}, x_{3}\rangle\langle x_{\bar 2}, x_{4}\rangle +\s^2q^2\langle x_{\bar 4}, x_{ 1}\rangle \langle x_{\bar 3}, x_{ 2}\rangle  \langle x_{\bar 1}, x_{4}\rangle\langle x_{ \bar 2}, x_{3}\rangle \\&+\langle x_{\bar 4}, T_{\bar 3} T_{\bar 2}x_{\bar 1} \rangle \langle x_{ 4}, T_{3} T_{2}x_{1} \rangle + \s \langle x_{4}, T_3 T_2 x_{\bar 1} \rangle \langle \langle x_{\bar 4}, T_{\bar 3} T_{\bar 2} x_{1} \rangle \\&+\s \langle x_4, T_3 T_{\bar 2} x_{\bar 1} \rangle \langle x_{\bar 4}, T_{\bar 3} T_{2} x_{1} \rangle + \s \langle x_4, T_{\bar 3} T_{\bar 2}x_{\bar 1} \rangle \langle x_{\bar 4}, T_{3} T_{2}x_{1}\rangle \\&+ \s^2 \langle x_{\bar 4}, T_{\bar 3} T_2 x_{\bar 1} \rangle \langle x_{4}, T_{3} T_{\bar 2} x_{1} \rangle + \s^3 \langle x_4, T_{\bar 3} T_2 x_{\bar 1} \rangle \langle x_{\bar 4}, T_{3} T_{\bar 2} x_{1} \rangle \\&+ \s^2 \langle x_{\bar 4}, T_3 T_2 x_{\bar 1} \rangle \langle x_{4}, T_{\bar 3} T_{\bar 2} x_{1} \rangle   + \s^2 \langle x_{\bar 4}, T_3 T_{\bar 2} x_{\bar 1} \rangle \langle x_{4}, T_{\bar 3} T_{2} x_{1} \rangle 
\intertext{and by permuting $x_{\overline{ 4}}\otimes x_{ 4},\cdots, x_{\bar 1}\otimes x_{ 1}$}
\state(\G(x_{\overline{ 3}}\otimes x_{ 3})&\G(x_{\overline{ 2}}\otimes x_{ 2}) \G(x_{\bar 1}\otimes x_{ 1})\G(x_{\bar 4}\otimes x_{ 4}))\\&=\langle x_{\bar 3}, x_{\bar 2}\rangle \langle x_{\bar 1}, x_{\bar 4}\rangle  \langle x_{4}, x_{1}\rangle\langle x_{2}, x_{3}\rangle 
+\q \langle x_{\bar 3}, x_{\bar 1}\rangle \langle x_{\bar 2}, x_{\bar 4}\rangle  \langle x_{4}, x_{2}\rangle\langle x_{1}, x_{3}\rangle 
\\&+ \langle x_{\bar 3}, x_{\bar 4}\rangle \langle x_{\bar 2}, x_{\bar 1}\rangle  \langle x_{4}, x_{3}\rangle\langle x_{1}, x_{2}\rangle 
+\s\langle x_{\bar 3}, x_{\bar 2}\rangle \langle x_{\bar 1}, x_{ 4}\rangle  \langle x_{\bar 4}, x_{1}\rangle\langle x_{2}, x_{3}\rangle 
\\&+\s \q \langle x_{\bar 3}, x_{\bar 1}\rangle \langle x_{\bar 2}, x_{4}\rangle  \langle x_{\bar 4}, x_{2}\rangle\langle x_{1}, x_{3}\rangle
+\alpha\q^2 \langle x_{\bar 3}, x_{\bar 4}\rangle \langle x_{\bar 2}, x_{ 1}\rangle  \langle x_{4}, x_{3}\rangle\langle x_{\bar 1}, x_{2}\rangle 
\\&+ \s\langle x_{\bar 3}, x_{ 2}\rangle \langle x_{\bar 1}, x_{ \bar 4}\rangle  \langle x_{ 4}, x_{ 1}\rangle\langle x_{\bar 2}, x_{3}\rangle 
+\s\q \langle x_{\bar 3}, x_{ 1}\rangle \langle x_{\bar 2}, x_{\bar 4}\rangle  \langle x_{ 4}, x_{2}\rangle\langle x_{\bar 1}, x_{3}\rangle
\\&+\s\langle x_{\bar 3}, x_{ 4}\rangle \langle x_{\bar 2}, x_{ \bar 1}\rangle  \langle x_{\bar 4}, x_{3}\rangle\langle x_{ 1}, x_{2}\rangle 
+\s^2\langle x_{\bar 3}, x_{ 2}\rangle \langle x_{\bar 1}, x_{  4}\rangle  \langle x_{ \bar 4}, x_{ 1}\rangle\langle x_{\bar 2}, x_{3}\rangle 
\\&+\s^2\q \langle x_{\bar 3}, x_{ 1}\rangle \langle x_{\bar 2}, x_{4}\rangle  \langle x_{\bar  4}, x_{2}\rangle\langle x_{\bar 1}, x_{3}\rangle 
+\s^2q^2\langle x_{\bar 3}, x_{ 4}\rangle \langle x_{\bar 2}, x_{ 1}\rangle  \langle x_{\bar 4}, x_{3}\rangle\langle x_{ \bar 1}, x_{2}\rangle \\&+\langle x_{\bar 3}, T_{\bar 2} T_{\bar 1}x_{\bar 4} \rangle \langle x_{ 3}, T_{2} T_{1}x_{4} \rangle + \s \langle x_{3}, T_2 T_1 x_{\bar 4} \rangle \langle  x_{\bar 3}, T_{\bar 2} T_{\bar 1} x_{4} \rangle \\&+\s \langle x_3, T_2 T_{\bar 1} x_{\bar 4} \rangle \langle x_{\bar 3}, T_{\bar 2} T_{1} x_{4} \rangle + \s \langle x_3, T_{\bar 2} T_{\bar 1}x_{\bar 4} \rangle \langle x_{\bar 3}, T_{2} T_{1}x_{4}\rangle \\&+ \s^2 \langle x_{\bar 3}, T_{\bar 2} T_1 x_{\bar 4} \rangle \langle x_{3}, T_{2} T_{\bar 1} x_{4} \rangle + \s^3 \langle x_3, T_{\bar 2} T_1 x_{\bar 4} \rangle \langle x_{\bar 3}, T_{2} T_{\bar 1} x_{4} \rangle \\&+ \s^2 \langle x_{\bar 3}, T_2 T_1 x_{\bar 4} \rangle \langle x_{3}, T_{\bar 2} T_{\bar 1} x_{4} \rangle   + \s^2 \langle x_{\bar 3}, T_2 T_{\bar 1} x_{\bar 4} \rangle \langle x_{3}, T_{\bar 2} T_{1} x_{4} \rangle 
\end{align*}

Since $\dim(H_{\R}) \geq 2$, there are two orthogonal unit eigenvectors $e_1, e_2$  and we take $x_{\bar 1}=x_4=e_1$, $x_{1}=x_{\bar 4}=e_2$, $x_{\bar{2}}=x_3=e_1$ and $x_{2}=x_{\bar{3}}=e_2$. Let us assume that $\bar T = T=I$.  Hence
\begin{align*}
&\state(\G(x_{\overline{ 4}}\otimes x_{ 4})\G(x_{\overline{ 3}}\otimes x_{ 3})\G(x_{\overline{ 2}}\otimes x_{ 2}) \G(x_{\bar 1}\otimes x_{ 1}))\\&-   \state(\G(x_{\overline{ 3}}\otimes x_{ 3})\G(x_{\overline{ 2}}\otimes x_{ 2}) \G(x_{\bar 1}\otimes x_{ 1})\G(x_{\overline{ 4}}\otimes x_{ 4}))\\&=\s^2\q^2-\s^2+\langle e_2, e_1 \rangle \langle e_1, e_2 \rangle + \s \langle e_1, e_1 \rangle \langle e_2, e_2 \rangle \\&+\s \langle e_1, e_1 \rangle \langle e_2, e_2 \rangle + \s \langle e_1, e_1 \rangle  \langle e_2, e_2 \rangle + \s^2 \langle e_2, e_1 \rangle \langle e_1, e_2 \rangle \\&+ \s^3 \langle e_1, e_1 \rangle \langle e_2, e_2 \rangle + \s^2 \langle e_2, e_1 \rangle \langle e_1, e_2 \rangle +   \s^2 \langle e_2,  e_1 \rangle \langle e_1, e_2 \rangle \\&- \langle e_2, e_2 \rangle \langle e_1, e_1 \rangle - \s \langle e_1, e_2 \rangle  \langle e_2, e_1 \rangle -\s \langle e_1, e_2 \rangle \langle e_2, e_1 \rangle \\&- \s \langle e_1, e_2 \rangle \langle e_2, e_1\rangle - \s^2 \langle e_2, e_2 \rangle \langle e_1, e_1 \rangle - \s^3 \langle e_1, e_2 \rangle \langle e_2, e_1 \rangle \\&- \s^2 \langle e_2, e_2 \rangle \langle e_1, e_1 \rangle   - \s^2 \langle e_2,  e_2 \rangle \langle e_1, e_1 \rangle = \alpha^2q^2-4\alpha^2 +3\alpha+\alpha^3-1
\end{align*}
Therefore the vacuum state is not a trace when $\s, q \in (-1,1)$.
\end{proof}

\begin{remark}
In \cite{BE23}, Bożejko and Ejsmont calculated for the double Gaussian operator of type B $G_{\alpha,q}$, if $\dim(H_{\R}) \geq 2$, $|q|<1$ and $T=\bar{T}=\mathbf{0}$, then vacuum state is a trace on $\text{vN}(G_{\alpha,q}(\HH_{\R}))$ if and only if $\s=0$.
\end{remark}

For $\pi \in \NC_2^A(2 m)$ we say that  B-pair  is inner if it is covered by another B-pair. A B-pair  of a noncrossing partition of type A is outer if it is not inner. 
In accordance with this definition we can formulate the following corollary.  

\begin{corollary}\label{cor13} Assume that $x_{\bar i},x_i \in H_\R$ for $i=1,\dots,n$. 
\begin{enumerate}
\item For $\s=0$, we recover the $\q$-deformed formula for moments of $q$-L\'evy process  \cite{Ans04}: 
\begin{align*}
\state(  B_{0,\q}(x_{\overline{n}}\otimes  x_{n})\cdots B_{0,\q}(x_{\bar 1}\otimes x_1))
&=\sum_{\substack{ \pi \in \P^{B}(n)\\ \NB(\pi)=0 }} \q^{\Cr(\pi)}  R_{\pi}^{x ,T,0}
\\
&=\sum_{\pi \in \PA(n)} \q^{\Cr(\pi)} R_{\pi}^{x ,T,0} 
\end{align*}
\item  For $T=\mathbf{0}$ and $\lambda=0$ we get the formula for Gaussian operator of type B \cite{BE23} 
\begin{align*}
\state\big( {B}_{\alpha,q}(x_{{\bar n}}\otimes  x_{n})\cdots {B}_{\alpha,q}({ x_{\bar{1}}\otimes  x_{1}})\big)=\sum_{\pi \in \PB_{ 2}(n)} \alpha^{\NB(\pi)}q^{\rc(\pi)} \prod_{\substack{\{i,j\} \in \pi} }\langle x_i, x_j\rangle  
\end{align*}
\item For $\q=0$ and $x_{\bar i}=x_i$, we obtain the  formula  
\begin{align*}
\state(\G(x_{{n}}\otimes  x_{n})\cdots \G(x_{ 1}\otimes x_1))
= \sum_{\pi \in \NC^A(n)} \left(1+\alpha\right)^{\#\Out(\pi)}R_{\pi}^{x ,T,\lambda},
\end{align*}
where $\#\Out(\pi)$ is the number of outer arcs in a noncrossing partition of type A.
\end{enumerate}
\end{corollary}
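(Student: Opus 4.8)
Every statement is a specialization of the master formula of Theorem~\ref{lem101},
\[
\state(\G(x_{\overline{n}}\otimes x_{n})\cdots \G(x_{\bar 1}\otimes x_{1}))= \sum_{\pi\in\PB(n)}  \s^{\NB(\pi)}\q^{\Cr(\pi)}  R_{\pi}^{x,T,\lambda},
\]
so the plan is to substitute the indicated parameter values and read off which $\pi\in\PB(n)$ survive.

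For item~(1) I would put $\s=0$ and $\lambda=0$; with the convention $0^{0}=1$ the weight $\s^{\NB(\pi)}$ annihilates every $\pi$ carrying a negative B-arc, so the sum collapses onto $\{\pi\in\PB(n):\NB(\pi)=0\}$, which by Definition~\ref{def:partycji} is exactly $\PA(n)$, giving the displayed equality. The identification with Anshelevich's $\q$-L\'evy moment formula~\cite{Ans04} is then bookkeeping: $\PA(n)$ is in canonical bijection with ordinary set partitions of $[n]$ (the remark following Definition~\ref{def:partycji}), $\Cr$ restricted to $\PA(n)$ is Biane's restricted-crossing statistic~\cite{Bia97}, and on a positive B-block $R^{x,T,0}$ reduces to the corresponding product of inner-product functionals. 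For item~(2) I would put $T=\bar T=\mathbf 0$ and $\lambda=0$: in $R^{x,T,\lambda}_{\pi}=\prod_{\BL\in\Pair_B(\pi)}R^{x,T,\lambda}(\BL)$ every B-singleton contributes $\lambda_{\bar i_1}\lambda_{i_1}=0$, while every B-block of size $m\geq 3$ contributes a functional containing at least one factor $T$ or $\bar T$, hence $0$; only B-pairs survive, with $R^{x,T,\lambda}((\bar i_2,\bar i_1),(i_1,i_2))=\langle x_{\bar i_2},x_{\bar i_1}\rangle\langle x_{i_2},x_{i_1}\rangle$, and merging the two factors of each B-pair into a single product over all two-element blocks turns the master formula into the type-B Gaussian moment formula of~\cite{BE23}.

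Item~(3) is the one with content. Putting $\q=0$ makes $\q^{\Cr(\pi)}$ vanish unless $\Cr(\pi)=0$, so the sum runs over $\NC^{B}(n)$. I would then invoke the projection $\D\colon\NC^{B}(n)\to\NC^{A}(n)$ from Remark~\ref{jakliczyc}(2): decompose $\NC^{B}(n)=\bigsqcup_{\rho\in\NC^{A}(n)}\D^{-1}(\rho)$ and parametrize $\tilde\pi\in\D^{-1}(\rho)$ by the subset of outer B-arcs of $\rho$ that are turned negative, so that $\NB(\tilde\pi)$ equals the cardinality of this subset. Granting (i) that, under the hypothesis $x_{\bar i}=x_i$, the B-cumulant is constant along each fiber, $R^{x,T,\lambda}_{\tilde\pi}=R^{x,T,\lambda}_{\rho}$, and (ii) the fiber count~\eqref{liczbapartycji}, one obtains
\[
\sum_{\tilde\pi\in\D^{-1}(\rho)}\s^{\NB(\tilde\pi)}=\sum_{i=0}^{\#\Out(\rho)}\binom{\#\Out(\rho)}{i}\s^{i}=(1+\s)^{\#\Out(\rho)},
\]
and summing over $\rho\in\NC^{A}(n)$ gives the asserted formula.

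The substitutions in (1)--(2) are immediate; the real work, and the step I expect to be the main obstacle, is point~(i) in item~(3): showing that turning an outer B-arc negative leaves $R^{x,T,\lambda}$ unchanged once the two tensor legs are forced to carry the same vector $x_{\bar i}=x_i$. Concretely, one must extend the pair-partition picture of Remark~\ref{jakliczyc}(2) and Figure~\ref{fig:exmapleouter} to blocks of arbitrary size, pin down precisely which $\rho\in\NC^{A}(n)$ a given $\tilde\pi\in\NC^{B}(n)$ projects to, and check that the reflected and unreflected legs of every B-block of $\tilde\pi$ carry equal products of inner products under $x_{\bar i}=x_i$. I would first settle the pair case, where Remark~\ref{jakliczyc}(2) and~\eqref{liczbapartycji} apply directly, and then extend the $\D$-projection and the fiber-constancy of the cumulant to general block sizes, either inductively or by tracking which actions in the proof of Theorem~\ref{twr:momentogolne} survive at $\q=0$.
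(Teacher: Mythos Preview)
Your proposal is correct and follows essentially the same route as the paper. Parts~(1) and~(2) are indeed treated as immediate specializations (the paper gives no separate argument for them), and for part~(3) the paper does exactly what you outline: it starts from $\sum_{\sigma\in\NC^B(n)}\alpha^{\NB(\sigma)}R^{x,T,\lambda}_\sigma$, decomposes $\NC^B(n)$ into the $\D$-fibers $\D^{-1}(\pi)$ for $\pi\in\NC^A(n)$, invokes~\eqref{liczbapartycji} and the binomial identity $\sum_i\binom{\#\Out(\pi)}{i}\alpha^i=(1+\alpha)^{\#\Out(\pi)}$, and writes $R^{x,T,\lambda}_\pi$ throughout. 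The fiber-constancy of the cumulant under $x_{\bar i}=x_i$ and the extension of $\D$ from pair partitions to arbitrary block sizes---precisely the points you flag as the main obstacle---are not spelled out in the paper's proof either; they are absorbed into the phrase ``by the assumption $x_{\bar i}=x_i$''. So your caution is well placed, but you are not missing an argument that the paper supplies.
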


\begin{proof}

(3). 
In order to proof the third point, we use the projector maps form \cref{jakliczyc} and by the assumption  $x_{\bar i }= x_i$,  we obtain  
\begin{align*}
\sum_{\pi \in \NC^A(n)} \left(1+\alpha\right)^{\#\Out(\pi)}R_{\pi}^{x ,T,\lambda} &=\sum_{\pi \in \NC^A(n)} \sum_{i=0}^{\#\Out(\pi)}\alpha^i{\#\Out(\pi) \choose i}R_{\pi}^{x ,T,\lambda}
\intertext{
 we count how many times we can assign parameter $\alpha$ to outer arcs of $\D^{-1}(\pi)$ and taking into account \cref{liczbapartycji} we see}
\\ &=\sum_{\pi \in \NC^A(n)}
\sum_{\sigma \in\D^{-1}(\pi)}\alpha^{\NB(\sigma)}  R_{\pi}^{x ,T,\lambda}
\\&=\sum_{\sigma \in \NC^B(n)} \s^{\NB(\pi)}R_{\pi}^{x ,T,\lambda}. 
\end{align*}

\end{proof}
\begin{remark}
The formula (3) in Corollary \ref{cor13} was employed in many papers related to Conditional Free Probability, e.g.  
\cite{BLS1996,Wojakowski2007,BozejkoWysoczanski2001}. 
\end{remark}

\begin{center} Acknowledgments
\end{center}
The authors would like to thank Marek Bo\.zejko and Franz Lehner for suggesting topics, several discussions and helpful comments. The authors also thank G. Świderski and R. Szwarc, who suggested and verified  the proof of point (5)    of \cref{uwagimiara}.  This research was funded in part by Narodowe Centrum Nauki, Poland WEAVE-UNISONO grant 2022/04/Y/ST1/00008 (Wiktor Ejsmont).

\bibliographystyle{amsalpha}

\bibliography{biblio2015}

@preamble{"\def\cprime{$'$}"}

@article {BE23,
    AUTHOR = {Bo\.{z}ejko, Marek and Ejsmont, Wiktor},
     TITLE = {The double {F}ock space of type {B}},
   JOURNAL = {SIGMA Symmetry Integrability Geom. Methods Appl.},
  FJOURNAL = {SIGMA. Symmetry, Integrability and Geometry. Methods and
              Applications},
    VOLUME = {19},
      YEAR = {2023},
     PAGES = {Paper No. 040, 22},
      ISSN = {1815-0659},
   MRCLASS = {47N30 (46L54)},
  MRNUMBER = {4599330},
       DOI = {10.3842/SIGMA.2023.040},
       URL = {https://doi.org/10.3842/SIGMA.2023.040},
}

@article {Ejsmont23,
    AUTHOR = {Ejsmont, Wiktor},
     TITLE = {Fock space associated with quadrabasic {H}ermite orthogonal
              polynomials},
   JOURNAL = {Studia Math.},
  FJOURNAL = {Studia Mathematica},
    VOLUME = {270},
      YEAR = {2023},
    NUMBER = {1},
     PAGES = {17--56},
      ISSN = {0039-3223,1730-6337},
   MRCLASS = {46L53 (46L54 47N30)},
  MRNUMBER = {4566005},
MRREVIEWER = {Beno\^{i}t\ Collins},
       DOI = {10.4064/sm210916-19-8},
       URL = {https://doi.org/10.4064/sm210916-19-8},
}

@article {Bia97,
    AUTHOR = {Biane, Philippe},
     TITLE = {Some properties of crossings and partitions},
   JOURNAL = {Discrete Math.},
  FJOURNAL = {Discrete Mathematics},
    VOLUME = {175},
      YEAR = {1997},
    NUMBER = {1-3},
     PAGES = {41--53},
      ISSN = {0012-365X,1872-681X},
   MRCLASS = {05A18 (05C25 06A07)},
  MRNUMBER = {1475837},
MRREVIEWER = {Konrad\ Engel},
       DOI = {10.1016/S0012-365X(96)00139-2},
       URL = {https://doi.org/10.1016/S0012-365X(96)00139-2},
}

@article {BLS1996,
    AUTHOR = {Bo\.{z}ejko, Marek and Leinert, Michael and Speicher, Roland},
     TITLE = {Convolution and limit theorems for conditionally free random
              variables},
   JOURNAL = {Pacific J. Math.},
  FJOURNAL = {Pacific Journal of Mathematics},
    VOLUME = {175},
      YEAR = {1996},
    NUMBER = {2},
     PAGES = {357--388},
      ISSN = {0030-8730},
   MRCLASS = {46L50 (46K99 46N50 81S25)},
  MRNUMBER = {1432836},
MRREVIEWER = {D. Petz},
       URL = {http://projecteuclid.org/euclid.pjm/1102353149},
}

@article {BozejkoSpeicher1991,
    AUTHOR = {Bo\.{z}ejko, Marek and Speicher, Roland},
     TITLE = {An example of a generalized {B}rownian motion},
   JOURNAL = {Comm. Math. Phys.},
  FJOURNAL = {Communications in Mathematical Physics},
    VOLUME = {137},
      YEAR = {1991},
    NUMBER = {3},
     PAGES = {519--531},
      ISSN = {0010-3616},
   MRCLASS = {46L50 (81S25)},
  MRNUMBER = {1105428},
MRREVIEWER = {Robin Hudson},
       URL = {http://projecteuclid.org/euclid.cmp/1104202738},
}

@unpublished{Ejsmont2020,
	author = {Ejsmont, Wiktor},
	date-added = {2021-03-01 09:43:19 +0100},
	date-modified = {2021-03-01 09:45:19 +0100},
	note = {https://arxiv.org/abs/2004.08538},
	title = {{Fock space associated with quadrabasic Hermite orthogonal polynomials}},
	year = {2020}}

@article {Ejsmont1,
    AUTHOR = {Ejsmont, Wiktor},
     TITLE = {Poisson type operators on the {F}ock space of type {$B$} and
              in the {B}litvi\'{c} model},
   JOURNAL = {J. Operator Theory},
  FJOURNAL = {Journal of Operator Theory},
    VOLUME = {84},
      YEAR = {2020},
    NUMBER = {1},
     PAGES = {67--97},
      ISSN = {0379-4024},
   MRCLASS = {46L53 (20F55 46L54 60H40)},
  MRNUMBER = {4157354},
MRREVIEWER = {Malte Gerhold},
       DOI = {10.7900/jot},
       URL = {https://doi.org/10.7900/jot},
}

@incollection{BozejkoSzwarc2003,
	author = {Bo\.{z}ejko, M. and Szwarc, R.},
	booktitle = {Asymptotic combinatorics with applications to mathematical physics ({S}t. {P}etersburg, 2001)},
	date-added = {2020-07-08 12:26:07 +0200},
	date-modified = {2020-07-08 12:26:26 +0200},
	doi = {10.1007/3-540-44890-X_9},
	mrclass = {20F55 (20C15)},
	mrnumber = {2009841},
	mrreviewer = {O. V. Shvartsman},
	pages = {201--221},
	publisher = {Springer, Berlin},
	series = {Lecture Notes in Math.},
	title = {Algebraic length and {P}oincar\'{e} series on reflection groups with applications to representations theory},
	url = {https://doi.org/10.1007/3-540-44890-X_9},
	volume = {1815},
	year = {2003},
	Bdsk-Url-1 = {https://doi.org/10.1007/3-540-44890-X_9}}

@article {BozejkoWysoczanski2001,
    AUTHOR = {Bo\.{z}ejko, Marek and Wysocza\'{n}ski, Janusz},
     TITLE = {Remarks on {$t$}-transformations of measures and convolutions},
   JOURNAL = {Ann. Inst. H. Poincar\'{e} Probab. Statist.},
  FJOURNAL = {Annales de l'Institut Henri Poincar\'{e}. Probabilit\'{e}s et
              Statistiques},
    VOLUME = {37},
      YEAR = {2001},
    NUMBER = {6},
     PAGES = {737--761},
      ISSN = {0246-0203},
   MRCLASS = {60A10 (60E05 60F05 81S25)},
  MRNUMBER = {1863276},
MRREVIEWER = {Detlef Plachky},
       DOI = {10.1016/S0246-0203(01)01084-6},
       URL = {https://doi.org/10.1016/S0246-0203(01)01084-6},
}

@article {Wojakowski2007,
    AUTHOR = {Wojakowski, {\L}ukasz},
     TITLE = {Probability interpolating between free and {B}oolean},
   JOURNAL = {Dissertationes Math.},
  FJOURNAL = {Dissertationes Mathematicae},
    VOLUME = {446},
      YEAR = {2007},
     PAGES = {45},
      ISSN = {0012-3862},
   MRCLASS = {46L53 (46L54 60E10)},
  MRNUMBER = {2308717},
MRREVIEWER = {Myl\`ene Ma\"{\i}da},
       DOI = {10.4064/dm446-0-1},
       URL = {https://doi.org/10.4064/dm446-0-1},
}

@article {A03,
    AUTHOR = {Anshelevich, Michael and M{\l}otkowski, Wojciech},
     TITLE = {Semigroups of distributions with linear {J}acobi parameters},
   JOURNAL = {J. Theoret. Probab.},
  FJOURNAL = {Journal of Theoretical Probability},
    VOLUME = {25},
      YEAR = {2012},
    NUMBER = {4},
     PAGES = {1173--1206},
      ISSN = {0894-9840},
   MRCLASS = {46L54 (05A18 33C45 60J35)},
  MRNUMBER = {2993018},
       DOI = {10.1007/s10959-012-0403-x},
       URL = {https://doi.org/10.1007/s10959-012-0403-x},
}

@book {Bourbaki,
    AUTHOR = {Bourbaki, Nicolas},
     TITLE = {Lie groups and {L}ie algebras. {C}hapters 4--6},
    SERIES = {Elements of Mathematics (Berlin)},
      NOTE = {Translated from the 1968 French original by Andrew Pressley},
 PUBLISHER = {Springer-Verlag, Berlin},
      YEAR = {2002},
     PAGES = {xii+300},
      ISBN = {3-540-42650-7},
   MRCLASS = {17-01 (00A05 20E42 20F55 22-01)},
  MRNUMBER = {1890629},
       DOI = {10.1007/978-3-540-89394-3},
       URL = {https://doi.org/10.1007/978-3-540-89394-3},
       
}

@book {BjornerBrenti,
    AUTHOR = {Bj\"{o}rner, Anders and Brenti, Francesco},
     TITLE = {Combinatorics of {C}oxeter groups},
    SERIES = {Graduate Texts in Mathematics},
    VOLUME = {231},
 PUBLISHER = {Springer, New York},
      YEAR = {2005},
     PAGES = {xiv+363},
      ISBN = {978-3540-442387; 3-540-44238-3},
   MRCLASS = {05-01 (05E15 20F55)},
  MRNUMBER = {2133266},
MRREVIEWER = {Jian-yi Shi},
}

@book {Humphreys,
    AUTHOR = {Humphreys, James E.},
     TITLE = {Reflection groups and {C}oxeter groups},
    SERIES = {Cambridge Studies in Advanced Mathematics},
    VOLUME = {29},
 PUBLISHER = {Cambridge University Press, Cambridge},
      YEAR = {1990},
     PAGES = {xii+204},
      ISBN = {0-521-37510-X},
   MRCLASS = {20-02 (20F32 20F55 20G15 20H15)},
  MRNUMBER = {1066460},
MRREVIEWER = {Louis Solomon},
       DOI = {10.1017/CBO9780511623646},
       URL = {https://doi.org/10.1017/CBO9780511623646},
}

@article {Ans01,
    AUTHOR = {Anshelevich, Michael},
     TITLE = {Partition-dependent stochastic measures and {$q$}-deformed
              cumulants},
   JOURNAL = {Doc. Math.},
  FJOURNAL = {Documenta Mathematica},
    VOLUME = {6},
      YEAR = {2001},
     PAGES = {343--384},
      ISSN = {1431-0635,1431-0643},
   MRCLASS = {46L53 (05A18 46N50 60E07 81S25)},
  MRNUMBER = {1871667},
MRREVIEWER = {Uwe\ Franz},
}

@article {BEH15, 
    AUTHOR = {Bo\.{z}ejko, Marek and Ejsmont, Wiktor and Hasebe, Takahiro},
     TITLE = {Fock space associated to {C}oxeter groups of type {B}},
   JOURNAL = {J. Funct. Anal.},
  FJOURNAL = {Journal of Functional Analysis},
    VOLUME = {269},
      YEAR = {2015},
    NUMBER = {6},
     PAGES = {1769--1795},
      ISSN = {0022-1236,1096-0783},
   MRCLASS = {46L53},
  MRNUMBER = {3373433},
MRREVIEWER = {Malte\ Gerhold},
       DOI = {10.1016/j.jfa.2015.06.026},
       URL = {https://doi.org/10.1016/j.jfa.2015.06.026},
}

@article {Nel59,
    AUTHOR = {Nelson, Edward},
     TITLE = {Analytic vectors},
   JOURNAL = {Ann. of Math. (2)},
  FJOURNAL = {Annals of Mathematics. Second Series},
    VOLUME = {70},
      YEAR = {1959},
     PAGES = {572--615},
      ISSN = {0003-486X},
   MRCLASS = {46.00 (22.00)},
  MRNUMBER = {107176},
MRREVIEWER = {J.\ T.\ Schwartz},
       DOI = {10.2307/1970331},
       URL = {https://doi.org/10.2307/1970331},
}

@book {ReeSim1,
    AUTHOR = {Reed, Michael and Simon, Barry},
     TITLE = {Methods of modern mathematical physics. {I}},
   EDITION = {Second},
      NOTE = {Functional analysis},
 PUBLISHER = {Academic Press, Inc. [Harcourt Brace Jovanovich, Publishers],
              New York},
      YEAR = {1980},
     PAGES = {xv+400},
      ISBN = {0-12-585050-6},
   MRCLASS = {46-01 (47-01)},
  MRNUMBER = {751959},
}

@article {HudPar,
    AUTHOR = {Hudson, R. L. and Parthasarathy, K. R.},
     TITLE = {Quantum {I}to's formula and stochastic evolutions},
   JOURNAL = {Comm. Math. Phys.},
  FJOURNAL = {Communications in Mathematical Physics},
    VOLUME = {93},
      YEAR = {1984},
    NUMBER = {3},
     PAGES = {301--323},
      ISSN = {0010-3616,1432-0916},
   MRCLASS = {46L50 (47D05 60H05 81E99)},
  MRNUMBER = {745686},
MRREVIEWER = {Ivan\ F.\ Wilde},
       URL = {http://projecteuclid.org/euclid.cmp/1103941122},
}

@article {SchurCondPos,
    AUTHOR = {Sch\"{u}rmann, Michael},
     TITLE = {Quantum stochastic processes with independent additive
              increments},
   JOURNAL = {J. Multivariate Anal.},
  FJOURNAL = {Journal of Multivariate Analysis},
    VOLUME = {38},
      YEAR = {1991},
    NUMBER = {1},
     PAGES = {15--35},
      ISSN = {0047-259X,1095-7243},
   MRCLASS = {46L50 (46L60 81S25)},
  MRNUMBER = {1128934},
MRREVIEWER = {S.\ Gudder},
       DOI = {10.1016/0047-259X(91)90029-2},
       URL = {https://doi.org/10.1016/0047-259X(91)90029-2},
}

@article {Ans04,
    AUTHOR = {Anshelevich, Michael},
     TITLE = {q-\text{L}{\'e}vy processes},
   JOURNAL = {J. Reine Angew. Math.},
    VOLUME = {576},
      YEAR = {2004},
     PAGES = {181–207 },
}

\end{document}